\renewcommand{\epsilon}{\varepsilon}
\renewcommand{\phi}{\varphi}
\renewcommand{\kappa}{\varkappa}
\renewcommand{\rho}{\varrho}
\renewcommand{\tilde}{\widetilde}
\newcommand{\qu}{\mathrm{que}}
\newcommand{\an}{\mathrm{ann}}
\newcommand{\ct}{\mathrm{cr}}
\newcommand{\IE}{\mathbb{E}}
\newcommand{\I}{\mathbbm{1}}
\newcommand{\IN}{\mathbb{N}}
\newcommand{\IP}{\mathbb{P}}
\newcommand{\IR}{\mathbb{R}}
\newcommand{\IZ}{\mathbb{Z}}
\newcommand{\cal}{\mathcal}
\newtheorem{theorem}{Theorem}[section]
\newtheorem{lemma}[theorem]{Lemma}
\newtheorem{corollary}[theorem]{Corollary}
\newtheorem{proposition}[theorem]{Proposition}
\newtheorem{remark}[theorem]{Remark}
\numberwithin{equation}{section}
\begin{document}

\title[Crossing speeds of random walks among
``sparse'' or ``spiky'' potentials]
{Crossing speeds of random walks among ``sparse'' or ``spiky''
  Bernoulli potentials on integers}

\author{Elena Kosygina}
\address{Department of Mathematics, Baruch College, One Bernard Baruch
  Way, New York, NY 10010}
\email{elena.kosygina@baruch.cuny.edu}
\thanks{\textit{2000 Mathematics Subject Classification.}
  Primary: 60K37, 60F05, 60J80. Secondary: 60J60.}
\thanks{\textit{Key words:} random walk, random potential, speed,
  quenched, annealed.  }
\begin{abstract}
  We consider a random walk among i.i.d.\ obstacles on $\IZ$ under the
  condition that the walk starts from the origin and reaches a remote
  location $y$. The obstacles are represented by a killing potential,
  which takes value $M>0$ with probability $p$ and value $0$ with
  probability $1-p$, $0<p\le 1$, independently at each site of $\IZ$. We
  consider the walk under both quenched and annealed measures. It is
  known that under either measure the crossing time from $0$ to $y$ of
  such walk, $\tau_y$, grows linearly in $y$. More precisely, the
  expectation of $\tau_y/y$ converges to a limit as $y\to\infty$. The
  reciprocal of this limit is called the asymptotic speed of the
  conditioned walk. We study the behavior of the asymptotic speed in
  two regimes: (1) as $p\to 0$ for $M$ fixed (``sparse''), and (2) as
  $M\to \infty$ for $p$ fixed (``spiky''). We observe and quantify a
  dramatic difference between the quenched and annealed settings.
\end{abstract}
\maketitle

\section{Introduction}
We shall start with a model description and the necessary
notation. After stating our results we offer an informal discussion,
references, and describe a conjecture which makes our results a part
of a more general picture.

\subsection{Model description and main results}
Let $V(x,\omega),\ x\in\IZ$, be i.i.d.\ random variables on a
probability space $(\Omega,{\cal F},\IP)$ such that
\begin{equation}
  \label{pot}
  \IP(V(0,\cdot)\ge 0)=1,\quad \IP(V(0,\cdot)>0)>0,\quad\text{and}\quad \IE[V(0,\cdot)]<\infty.
\end{equation}
These random variables represent a random potential on $\IZ$. 
Given a realization of the potential, i.e.\ for each fixed
$\omega\in\Omega$, we consider a Markov chain on
$\IZ\cup\{\dagger\}$ with transition probabilities:
\begin{equation}
  \label{tp}
p(x,y,\omega)
  =
  \begin{cases}
    1-e^{-V(x,\omega)},&\text{if } y=\dagger,\ x\ne\dagger;\\
    \dfrac12\, e^{-V(x,\omega)},&\text{if } y=x\pm 1,\ x\ne\dagger;\\
    1,&\text{if } y=\dagger,\ x=\dagger;\\
    0,&\text{otherwise}.\\
  \end{cases}
\end{equation}
Informally, this is a so called ``killed random walk'': at each site
$x$ the walk either gets killed with probability $1-e^{-V(x,\omega)}$
(and moves to the absorbing state $\dagger$) or survives and moves to
one of the two neighboring sites with equal probabilities
$e^{-V(x,\omega)}/2$. The corresponding path measure and the
expectation with respect to it will be denoted by ${P}^\omega$
and ${E}^\omega$ respectively. Unless stated otherwise, the
killed random walk starts from $0$.

Denote by $(S_n)_{n\ge0}$ a path on $\IZ\cup\{\dagger\}$ and set
\begin{equation}
  \label{taux}
  \tau_x:=\inf\{n>0:\,S_n=x\}\in \IN\cup\{\infty\}.
\end{equation}
We shall put different measures on the set of nearest neighbor
paths. Measure $P$, and the corresponding expectation $E$, will always
refer to the simple symmetric random walk on $\IZ$ starting from $0$.

Next, fix $y>0$ and consider the conditional measure
on nearest neighbor paths starting from $0$, which
is defined by
\begin{equation}
  \label{Qq}
  Q_y^{\omega}(\,\cdot\,):={P}^{\omega}(\,\cdot\,|\,\tau_y<\infty).
\end{equation}
Measure $Q_y^{\omega}$ is called the quenched path measure. Notice
that it can be equivalently defined as follows:
\begin{align}
  \label{qpm}
  Q^{\omega}_y(A):&=(Z^{\omega}_y)^{-1}E\left(\I_{\{\tau_y<\infty\}}
    \I_A\,e^{-\sum_{n=0}^{\tau_y-1}V(S_n,\omega)}\right),\ \text{where
  }
  \\\label{qnf} 
Z^{\omega}_y&:={P}^\omega(\tau_y<\infty)=E\left(\I_{\{\tau_y<\infty\}}
    e^{-\sum_{n=0}^{\tau_y-1}V(S_n,\omega)}\right).
\end{align}

Finally, we define the annealed measure $Q_y$, which is a measure on
the product space of
$\Omega$ and the nearest-neighbor paths starting from $0$. We let
\begin{equation}
    \label{am}
    Q_y(B):=\dfrac{\IE({P}^\omega 
(B\cap\{\tau_y<\infty\}))}{\IE({P}^\omega(\tau_y<\infty))}.
\end{equation}
Whenever the starting point of a process is different from $0$, we
shall indicate it with a superscript, for example, $Q^{\omega,x}_y$
will denote a quenched path measure of a killed random walk, which
starts at $x$ and is conditioned to hit $y$, $y>x$.
  
The quenched and annealed asymptotic speeds, $v^{\qu}_V$ and
$v^{\an}_V$, are the deterministic quantities, defined by
\begin{equation}
  \label{speeds}
  \frac{1}{v^{\qu}_V}:=\lim_{y\to\infty}\frac{E_{Q^{\omega}_y}(\tau_y)}{y}\quad
  (\IP\text{-a.s.});\quad
  \frac{1}{v^{\an}_V}:=\lim_{y\to\infty}\frac{E_{Q_y}(\tau_y)}{y}.
\end{equation}
By Proposition~\ref{basic1}
below and \cite[Theorem 1.2]{KM12} respectively these limits exist
and are finite.

In this paper we consider i.i.d.\  Bernoulli potentials
$V_{p,M}(x,\cdot)$, $x\in\IZ$, $p\in(0,1]$, $M>0$,
\begin{equation}
  \label{bp}
  \IP(V_{p,M}(x,\cdot)=M)=1-\IP(V_{p,M}(x,\cdot)=0)=p,
\end{equation}
and study the behavior of the corresponding quenched and annealed
asymptotic speeds $v^\qu_{p,M}$ and $v^\an_{p,M}$ in two regimes:
``sparse'' ($p\to 0$, $M$ is fixed) and ``spiky'' ($M\to\infty$, $p$
is fixed).  Our main results are contained in the following two
theorems. By $f(x)\sim g(x)$ as $x\to\alpha$ we mean that
$\lim_{x\to \alpha}f(x)/g(x)=1$.
\begin{theorem}
\label{vque}
With $V_{p,M}$ as above, the quenched speed, $v^{que}_{p,M} $, satisfies
\begin{align}
  \label{qp}&v_{p,M}^{\qu}   \sim \frac{3p}{2} \text{ as } p\to
  0\ \text{uniformly in } M\in [M_0,\infty),\ \forall M_0>0;\\\label{qM}
&v_{p,M}^{\qu} \sim \frac{3p}{2-p+2p^2} \text{ as } M\to\infty\
\text{locally uniformly in }p\in(0,1].
\end{align}
\end{theorem}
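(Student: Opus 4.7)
My approach proceeds by representing the conditioned walk under $Q_y^\omega$ as a random walk in a random environment and exploiting the formula for its speed. Conditioning the killed walk on $\{\tau_y<\infty\}$ produces the $h$-transform with $h_y(x):={P}^\omega(\tau_y<\infty\mid S_0=x)$, whose right-step probability is $\omega_x = h_y(x{+}1)/(h_y(x{-}1)+h_y(x{+}1))$. The strong Markov identity $h_y(x{-}1)=W_{x-1}h_y(x)$, with the $y$-independent survival probabilities $W_x:={P}^\omega(\tau_{x+1}<\infty\mid S_0=x)$ satisfying the forward recursion $W_x = e^{-V_x}/(2-e^{-V_x}W_{x-1})$, shows that $\omega_x=1/(1+W_{x-1}W_x)$ is independent of $y$. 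Thus the $y\to\infty$ limit of $Q_y^\omega$ is a stationary ergodic RWRE on $\IZ$ with $\rho_x:=(1-\omega_x)/\omega_x=W_{x-1}W_x$, and Proposition~\ref{basic1} combined with the classical expected-edge-crossing computation for such chains yields
\[
\frac{1}{v^{\qu}_{p,M}} \;=\; 1 + 2\sum_{k\ge 1}\IE\!\left[\prod_{j=-k+1}^{0}\rho_j\right].
\]
The key structural fact for $V_{p,M}$ is that along any stretch of sites with $V\equiv 0$ the recursion preserves the one-parameter ansatz $W_x=(k+k_0)/(k+k_0+1)$, where $k$ counts sites since the last $M$-site; if that $M$-site is preceded by a long $0$-stretch, $k_0$ equilibrates to $k_0(M):=1/(2e^M-2)$, and $k_0(M)\to 0$ as $M\to\infty$.

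For the spiky regime I work first at $M=\infty$, where $W=0$ at each $M$-site and $W_x=k/(k+1)$ between them. Conditioning on $L(0)=-N$, the position of the nearest $M$-site left of $0$: on $\{N\ge k+1\}$ the product $\prod_{j=-k+1}^{0}\rho_j$ telescopes to $(N-k)(N-k+1)/(N(N+1))$, and on $\{N\le k\}$ some $W_j$ vanishes so the product is $0$. Summing against the geometric weights $p(1-p)^N$, swapping the $(k,N)$ order of summation, and using $\sum_{n=1}^{m-1}n(n+1)=(m-1)m(m+1)/3$ collapses the double sum to $(1-p)^2/(3p)$, giving $1/v^{\qu}_{p,\infty}=(2-p+2p^2)/(3p)$. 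To pass to the asymptotic~(\ref{qM}) at finite $M$ I use dominated convergence, controlling $|W^{(M)}_x-W^{(\infty)}_x|\le C e^{-M}$ uniformly in $x$ and establishing a geometric tail bound $\IE[\prod_{j=-k+1}^{0}\rho_j]\le C r^k$ with $r<1$ and constants locally uniform in $p\in(0,1]$.

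For the sparse regime ($p\to 0$, $M\ge M_0$), the same telescoping on $\{L(0)=-N,\,N\ge k\}$ yields $\prod_{j=-k+1}^{0}\rho_j=(N-k+k_0)(N-k+k_0+1)/((N+k_0)(N+k_0+1))$, with $k_0\le 1/(2e^{M_0}-2)$ bounded uniformly. Under the scaling $\ell:=Np$, $m:=kp$, the weight $p(1-p)^N$ tends to $e^{-\ell}d\ell$ and the ratio to $((\ell-m)/\ell)^2$; a bounded-convergence argument gives
\[
2\sum_{k\ge 1}\IE\!\left[\prod_{j=-k+1}^{0}\rho_j\right] \;\sim\; \frac{2}{p}\int_0^\infty\!dm\int_m^\infty\!\Big(\frac{\ell-m}{\ell}\Big)^{\!2}e^{-\ell}\,d\ell \;=\; \frac{2}{3p},
\]
the last equality following from swapping the order of integration. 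Hence $1/v^{\qu}_{p,M}\sim 2/(3p)$, which is~(\ref{qp}); the asymptotic is independent of $M$ because $k_0$ contributes only $O(p)$ corrections to the integrand, giving uniformity over $M\in[M_0,\infty)$. The main technical obstacle in both regimes is the uniform geometric tail bound on $\IE[\prod_{j=-k+1}^{0}\rho_j]$ needed to interchange limit and infinite sum; I expect this to follow from a coupling with an i.i.d.\ upper-bounding RWRE, combined with a large-deviations estimate showing that configurations with several $M$-sites clustered in a window of length $\lesssim 1/p$ contribute negligibly.
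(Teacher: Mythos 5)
Your route is genuinely different from the paper's and, at the level of main ideas, it is sound; the computations I can check are correct (the $h$-transform identity $\omega_x=1/(1+W_{x-1}W_x)$, the recursion $W_x=e^{-V_x}/(2-e^{-V_x}W_{x-1})$, the edge-crossing formula $E^\omega[T_1]=1+2\sum_{k\ge 1}\prod_{j=0}^{k-1}\rho_{-j}$, the telescoping $W_{-k}W_0\prod_{j=1}^{k-1}W_{-j}^2=(N-k)(N-k+1)/(N(N+1))$ at $M=\infty$, the collapse to $(1-p)^2/(3p)$, and the $k_0(M)=1/(2e^M-2)$ fixed point). The paper instead works directly with $\IE[E^\omega(\tau_1\,|\,\tau_1<\infty)]$ from Proposition~\ref{basic2}, isolates the same quantity $(2-p+2p^2)/(3p)$ as $\IE[E^\omega(\tau_1\,|\,\tau_1<\tau_{-a_1})]$ (Lemma~\ref{escape}), and then shows via Lemmas~\ref{elem1}--\ref{elem3} that the contribution of paths visiting $-a_1$ is $O(e^{-M})$ and $O(e^{-M}\ln(1/p)/p)$ respectively; notably your ``exact answer at $M=\infty$'' is precisely the paper's main term, since at $M=\infty$ the conditioning $\{\tau_1<\infty\}$ forces $\tau_1<\tau_{-a_1}$. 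What your approach buys is a cleaner structural picture (a stationary ergodic RWRE with explicit $W$-ansatz and a closed form at $M=\infty$); what the paper's buys is that a single main-term computation together with three elementary error bounds yields both asymptotics at once, with the uniformity in $M$ and $p$ falling out directly, whereas your argument needs more machinery to pass from $M=\infty$ to large $M$ and from the scaling heuristic to a proof. The nontrivial gaps in your sketch are exactly the ones you flag: (i) in the sparse regime you restrict to $\{N\ge k\}$ and invoke bounded convergence, but the $\{N<k\}$ contribution is not a priori negligible --- one has to split the product at $-N$, use $\prod_{j=-N+1}^0\rho_j=\kappa(1+\kappa)/((N+\kappa)(N+\kappa+1))$ with $\kappa\le k_0(M_0)$, and then a uniform geometric bound such as $\IE\bigl[\prod_{j=-k+1}^0\rho_j\bigr]\le(1-p(1-e^{-M}))^k$ (from $W_j\le e^{-M}$ at occupied sites) on the remaining factor to see that $\sum_k\IE[\prod\I_{\{N<k\}}]=O(1)=o(1/p)$; (ii) the replacement of the random offset $\kappa_{-N}$ by the equilibrium $k_0$ and (iii) the interchange of $\lim_{M\to\infty}$ (resp. $p\to 0$) with $\sum_k$ both need the same uniform geometric tail. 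These are fillable but must be supplied; the large-deviation phrasing at the end is not quite the right tool, since configurations with one or two obstacles in a window of length $\asymp 1/p$ are typical, not rare --- what makes them harmless is the quadratic suppression $\kappa(1+\kappa)/(N(N+1))$ together with the geometric tail, not rarity.
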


\begin{theorem}
\label{vann}
With $V_{p,M}$ as above, the annealed speed, $v^{ann}_{p,M} $, satisfies
\begin{align}
  \label{ap}&-\log v_{p,M}^{\an}\sim \frac{2(e^M -1)}{p} \text{ as }
  p\to 0\ \text{uniformly in } M\in[M_0,\infty),\ \forall 
  M_0>0;\\\label{aM}&-\log v_{p,M}^{\an}\sim \frac{2(1-p)e^M}{p}
  \text{ as } M\to\infty\ \text{locally uniformly in }p\in(0,1).
\end{align}
\end{theorem}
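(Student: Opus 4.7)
The plan is to estimate both asymptotics via matching lower and upper bounds on the ratio $E_{Q_y}(\tau_y)/y$ as $y\to\infty$, which by \eqref{speeds} equals $1/v^{\an}_{p,M}$. The first step is to reduce the annealed quantities to functionals of the simple symmetric random walk $(S_n)$: using \eqref{qnf} together with the i.i.d.\ Bernoulli structure of $V_{p,M}$ and the identity $\sum_{n<\tau_y}V(S_n,\omega)=M\sum_x\I_{\{V(x,\omega)=M\}}L_{\tau_y}(x)$, one obtains
\begin{equation*}
    \IE Z^\omega_y = E\Big[\I_{\{\tau_y<\infty\}}\prod_{x\in R(\tau_y)} g(L_{\tau_y}(x))\Big],\qquad g(\ell) := (1-p)+pe^{-M\ell},
\end{equation*}
where $R(\tau_y)$ denotes the set of sites visited by $(S_n)$ up to time $\tau_y$ and $L_{\tau_y}(x)$ is the number of visits to $x$; an analogous expression with an extra factor of $\tau_y$ inside holds for $\IE E^\omega[\tau_y\I_{\{\tau_y<\infty\}}]$.

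Both asymptotics are then handled by identifying the dominant contribution to the ratio of these annealed quantities. The heuristic is that the annealed measure concentrates on atypical SRW trajectories that wander far below the origin before reaching $y$: since for $y>0$ the range up to $\tau_y$ is exactly the interval $[\min_{k<\tau_y}S_k,\,y]$, conditioning on $\min_{k<\tau_y}S_k = -L$ yields an annealed weight approximately $(1-p+pe^{-M})^{y+L}$ when local times are unit and approaching $(1-p)^{y+L}$ when they are large. Combining this with the classical gambler's-ruin estimate $P(\min_{k\le\tau_y}S_k\le -L) = y/(y+L)$ and the observation that, conditional on $\min_{k\le\tau_y}S_k = -L$, the expected value of $\tau_y$ under the SRW is of order $(y+L)^2$, the problem reduces to a one-dimensional saddle-point analysis in $L$. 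Tracking the $M$-dependence through $g$ in the two regimes then produces the stated rates $2(e^M-1)/p$ and $2(1-p)e^M/p$.

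The main obstacle will be establishing the sharp constant in the upper bound on $-\log v^{\an}_{p,M}$. The annealed weight $\prod_x g(L_{\tau_y}(x))$ depends nontrivially on the full local-time profile, not just on the range, so one must either exploit the monotonicity and convexity of $g$ to bound the product by its extremal value over profiles with a given range, or perform a Donsker--Varadhan-type large-deviation analysis for the joint distribution of $(\tau_y,R(\tau_y),L_{\tau_y}(\cdot))$ under the SRW. A further subtlety is that the two regimes call for different expansions of $g$: in the sparse regime, $\log g(\ell) = -p(1-e^{-M\ell}) + O(p^2)$ turns the total cost into an essentially additive range functional modulated by $M$; in the spiky regime, $g(\ell)\to 1-p$ uniformly in $\ell\ge 1$ decouples the weight from the local-time profile entirely. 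Combining either expansion with the saddle-point analysis finishes the proof.
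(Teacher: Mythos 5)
Your reduction $\IE Z^\omega_y = E\big[\I_{\{\tau_y<\infty\}}\prod_{x\in R(\tau_y)} g(L_{\tau_y}(x))\big]$ with $g(\ell)=(1-p)+pe^{-M\ell}$ is correct, and it is a genuinely different starting point from the paper (which passes instead to $Q_{0,y}$, analyzes the law of the gap sequence between obstacles under the annealed measure, and shows it is comparable to a product of log-series distributions with mean gap $\sim e^{K(p,M)}/K(p,M)$). However, the subsequent heuristic and the saddle-point you build on it have a genuine flaw that I do not think can be repaired within the framework you describe.

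The claim that ``the annealed measure concentrates on atypical SRW trajectories that wander far below the origin'' is wrong, and the one-variable saddle-point in $L=-\min_{k<\tau_y}S_k$ degenerates. Increasing $L$ strictly \emph{hurts} the annealed weight (every new site in the range contributes a factor $\le 1$), the gambler's-ruin probability of a deep excursion is only polynomially small (so it cannot compensate an exponential weight loss nor produce one), and the paper in fact replaces $Q_y$ by $Q_{0,y}$ precisely because left excursions are irrelevant for the speed. More quantitatively: with your ingredients, $P(\min=-L)\approx y/(y+L)^2$ and $E[\tau_y\mid\min=-L]\asymp(y+L)^2$, so the summand in the numerator is $\approx y\,w^{y+L}$ for an effective per-site weight $w<1$; this is monotone decreasing in $L$, the ``saddle'' sits at $L=0$ for every $y\ge 1$, and the resulting ratio gives $E_{Q_y}(\tau_y)/y\asymp y$, which diverges instead of converging to $e^{K(p,M)}$. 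The reason this toy computation fails is the second, more fundamental gap: the factor $E[\tau_y\mid\min=-L]\asymp(y+L)^2$ is a \emph{SRW} expectation, but after integrating out $\omega$ the path law is tilted by $\prod_x g(L_{\tau_y}(x))$, which favours small local times and therefore drastically reduces typical $\tau_y$; one needs the tilted expectation of $\tau_y$, which is exactly the unknown quantity $1/v^{\an}$ and cannot be fed back in as an SRW estimate.

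The correct mechanism is on the environment side, not the path side: under the annealed measure $Q_{0,y}$ the obstacles are pushed far apart (gaps of order $e^{K(p,M)}$ with $K(p,M)=2p^{-1}(1-p)(e^M-1)$), and the conditioned walk pays $\asymp(\text{gap})^2$ per gap, giving $E_{Q_{0,y}}(\tau_y)/y\asymp e^{K}$. Your plan of extracting a sharp constant from a Donsker--Varadhan analysis of the joint law of $(\tau_y,R(\tau_y),L_{\tau_y}(\cdot))$ under the tilted SRW is not impossible in principle, but it must recover the gap/log-series structure implicitly through the local-time functional, which is a substantially harder route than the paper's. As written, the one-dimensional saddle-point in $L$ does not do this, and the proposal does not reach the stated asymptotics.
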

\begin{remark} 
  {\em Observe that in the ``sparse'' regime  both the
    quenched and annealed speeds vanish but the latter does so at a
    dramatically higher rate. In the ``spiky'' regime 
    the quenched speed converges to a positive constant while the
    annealed one vanishes extremely fast. This striking difference is
    a purely one-dimensional phenomenon. In dimensions two and higher
    the existence of the annealed asymptotic velocity is known
    (\cite[Theorem C]{IV12b}) but the existence of the quenched
    asymptotic velocity is still a largely open problem (see
    \cite[Section 1.2]{IV12a} and references therein). Thus the
    comparison question might seem a bit premature. Nevertheless,
    even when both speeds are well-defined, we do not expect to see
    anything like this in higher dimensions.}\label{rem}
\end{remark}
\begin{remark}
  {\em Our results can also be interpreted in terms of a closely related
    model of killed biased random walks conditioned to
    survive up to time $n$. The latter model exhibits a first order
    phase transition (in all dimensions) as the size of the bias
    increases (see, for example, \cite{MG02}, \cite{Fl07},
    \cite{IV12b}, and references therein). In dimension 1, there is a
    critical bias $b_{\ct}^{\an}$ ($b_{\ct}^{\qu}$) such that these
    random walks have the zero asymptotic speed when the bias is less than
    $b_{\ct}^{\an}$ (resp., $b_{\ct}^{\qu}$) and a strictly positive
    asymptotic  speed when the bias is greater or
    equal to $b_{\ct}^{\an}$ (resp., $b_{\ct}^{\qu}$). The model of
    crossing random walks considered in the present article informally
    corresponds to the critical bias case. In particular,
    (\ref{aM}) describes how the first order transition gap closes in
    on the second order transition in the pure trap model
    ($M=\infty$).}
\end{remark}

\subsection{Discussion and an open problem.}  There are many papers
concerning the relationship of the quenched and annealed Lyapunov
exponents of a random walk in a random potential on $\IZ^d$, $d\ge 1$
(see \cite{Fl08}, \cite{Zy09}, \cite{KMZ11}, \cite{IV12c},
\cite{Zy12}, and references therein). Lyapunov exponents represent the
exponential decay rates of the quenched and annealed survival
probabilities, ${P}^\omega(\tau_y<\infty)$ and
$\IE({P}^\omega(\tau_y<\infty))$ respectively. The equality or
non-equality of quenched and annealed Lyapunov exponents determines
whether the disorder introduced by the random environment is ``weak''
or ``strong''.  According to this classification, any non-trivial
disorder in the one-dimensional case is strong (as well as in
dimensions 2 and 3 under mild additional conditions on $V$, see
Theorem 1 and a paragraph after it in \cite{Zy12}). Our results
provide more refined information about differences between quenched
and annealed behavior.

In dimensions 4 and higher one expects a transition from weak to
strong disorder for i.i.d.\ potentials of the form $\gamma V$, $V\ge
0$, for some $\gamma^*\in(0,\infty)$.  Such result is known to hold
when $V$ is bounded away from $0$ (\cite{Fl08}, \cite{Zy09},
\cite{IV12c}).

Let us discuss the case of a ``small'' potential, i.e.\ the potential
of the form $\gamma V$ where $\gamma\ll 1$, in more detail. We shall
restrict ourselves to dimension 1 but we believe that a similar result
holds in all dimensions (when the quenched speed is well defined, see
Remark~\ref{rem} above).

Let $(V(x,\cdot))$, $x\in\IZ$, be i.i.d.\ random variables satisfying
(\ref{pot}) and $v^{\qu}_{V}$ and $v^{\an}_{V}$ be the quenched and
annealed speeds as defined in (\ref{speeds}). Using our methods it
should be not difficult to show that as $\gamma\downarrow 0$
\begin{equation}
  \label{op1}
  v^{\qu}_{\gamma V},\ v^{\an}_{\gamma V}\sim
\sqrt{2\gamma\IE(V)}.
\end{equation}
This result would complement the results of this paper. The relation
(\ref{op1}) is suggested by the following two facts.

(i) The speeds can be equivalently defined as follows
(Proposition~\ref{basic1} below and \cite[Theorem 1.2]{KM12}):
\begin{equation}
  \label{sle}
  \frac{1}{v^{\qu}_V}=\frac{d}{d\lambda}\alpha_{\lambda+V}(1)\Big|_{\lambda=0+},
  \quad
  \frac{1}{v^{\an}_V}=\frac{d}{d\lambda}\beta_{\lambda+V}(1)\Big|_{\lambda=0+},
\end{equation}
where for each $\lambda\ge 0$ the non-random quantities
\begin{align}
  \alpha_{\lambda+V}(1)&:=-\lim_{y\to\infty}\frac1y \log
  E\left(\I_{\{\tau_y<\infty\}}e^{-\sum_{n=0}^{\tau_y-1}(\lambda+V(S_n,\omega))}\right)
  \quad
  (\text{$\IP$-a.s.});\label{que}\\
  \beta_{\lambda+V}(1)&:=-\lim_{y\to\infty}\frac1y
  \log\IE\left[E\left(\I_{\{\tau_y<\infty\}}e^{-\sum_{n=0}^{\tau_y-1}
        (\lambda+V(S_n,\omega))}\right)\right]\label{ann}
\end{align}
are the quenched and annealed (respectively) Lyapunov exponents of a
random walk in the potential $\lambda+V$. Under our assumptions, both
limits are positive and finite. For more details about the existence
and properties of Lyapunov exponents see \cite{Ze98}, \cite{Fl07}, and
\cite{Mou12}.

(ii) It was shown in \cite{KMZ11} (see also \cite{Wa01}, \cite{Wa02})
that $\alpha_{\gamma V}(1),\ \beta_{\gamma V}(1)\sim
\sqrt{2\gamma\IE(V)}$ as $\gamma\downarrow 0$.  Since
\[\alpha_{\gamma(\lambda+V)}(1)\sim\sqrt{2\gamma(\lambda+\IE(V))} 
\quad\text{as $\gamma\downarrow 0$ \quad and}\quad \frac{d}{d(\gamma
  \lambda)}=\frac1\gamma \frac{d}{d\lambda},\] a formal
differentiation of $\sqrt{2\gamma(\lambda+\IE(V))}$ with respect to
$\lambda$ and division by $\gamma$ lead to the conjecture
(\ref{op1}). We remark that in the case of a constant potential
(without loss of generality we set $V\equiv 1$) it is easy to compute
the Lyapunov exponent explicitly:
$\alpha_{\lambda+\gamma}(1)=\beta_{\lambda+\gamma}(1)=\log(e^{\lambda+\gamma}
+\sqrt{e^{2(\lambda+\gamma)}-1})$, $\gamma,\lambda\ge 0$, and obtain
from (\ref{sle})
\[v_\gamma:=v^{\qu}_{\gamma}=v^{\an}_{\gamma}=\sqrt{1-e^{-2\gamma}},\quad
\gamma\ge 0.\] Thus, $\alpha_{\gamma}(1)=\beta_{\gamma}(1)\sim
\sqrt{2\gamma}$ and $v_{\gamma}\sim \sqrt{2\gamma}$ as
$\gamma\downarrow 0$. The latter is (\ref{op1}) in this special case.

The relation (\ref{op1}) further supports an informal statement that when
the potential $V$ is ``small'' (but not ``sparse''!) both the quenched
and annealed behavior in such a potential are well approximated by the
behavior of the walk in a constant potential $\IE(V)$.

\textit{An open problem.} Theorems~\ref{vque}, \ref{vann}, and
asymptotics (\ref{op1}) provide information about $v^\qu_{p,M}$ and
$v^\an_{p,M}$ when $M\to\infty$ or $p\to 0$ or $M\to 0$. A more
interesting and challenging question is to study surfaces formed by
the speeds when $(p,M)\in[0,1]\times[0,\infty)$. 

As the first step, fix $p\in(0,1)$ and consider $v^\qu_{p,M}$ and
$v^\an_{p,M}$ as functions of $M$. It does not seem surprising that
the annealed environment will typically be more sparse than the
quenched one and thus the random walker will feel less need to quickly
navigate towards the goal $y$. One expects that $v^\qu_{p,M}$ is
strictly increasing in $M$ for a fixed $p$ and that $v^\an_{p,M}$ has
a single maximum as $M$ changes from $0$ to $\infty$ (see
Figure~\ref{fig}).
\begin{figure}[h]
  \centering
  \includegraphics[height=7cm]{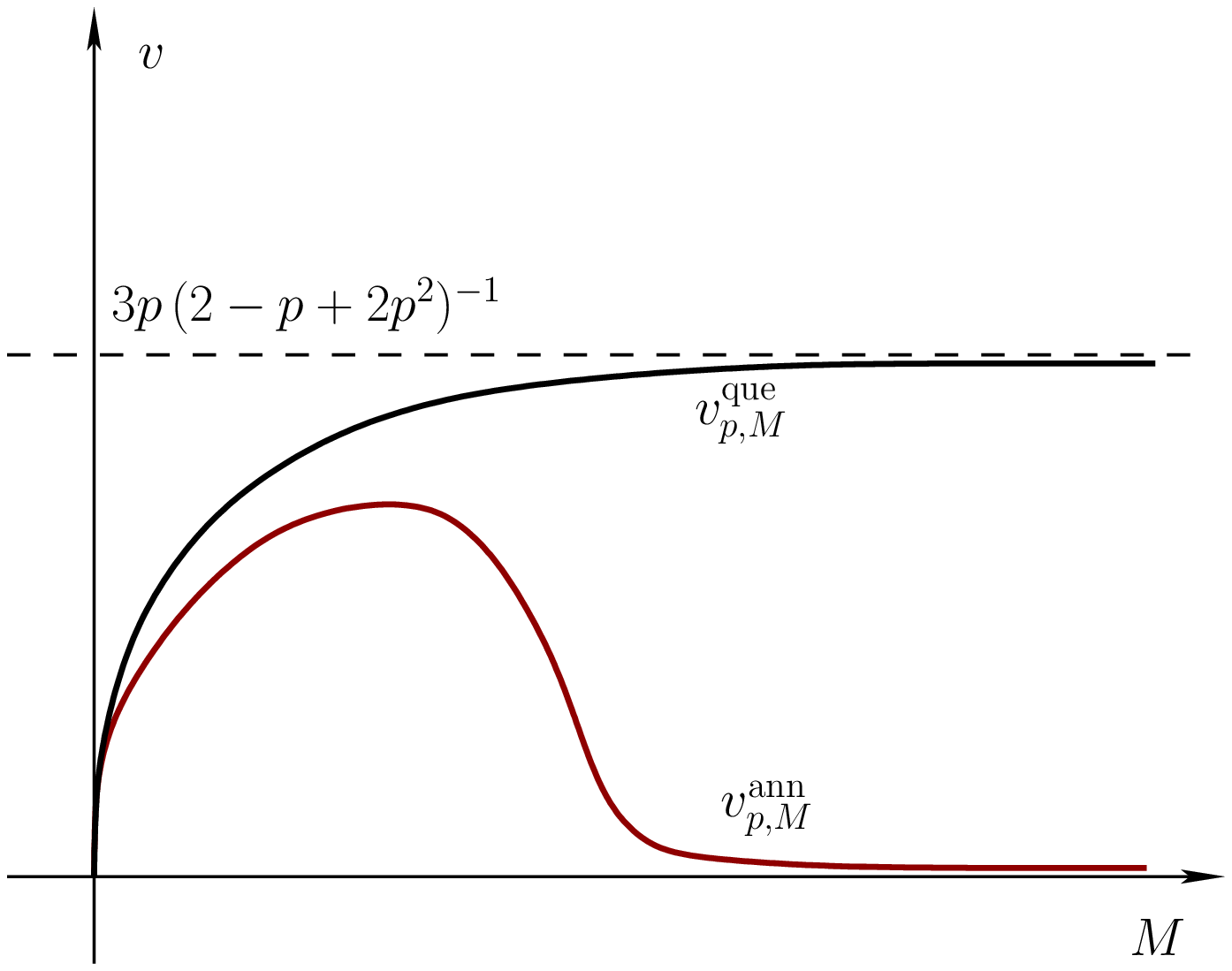}
  \caption{Conjectured shape of $v^\qu_{p,M}$ and $v^\an_{p,M}$ as 
    functions of $M$ for a fixed $p\in(0,1)$. By (\ref{op1}),
    $v^\qu_{p,M},v^\an_{p,M}\sim \sqrt{2pM}$ as $M\to 0$. }
  \label{fig}
\end{figure}
Is this really the case? These questions are presumably hard
to resolve analytically. We are not aware even of any simulations done
on this problem (see though \cite{MG02} as well as \cite{GP82}).

\subsection{Organization of the paper.} In Section ~2 we prove
Theorem~\ref{vque}. The proof of Theorem~\ref{vann} is given in
Section~3, which is subdivided into 3 subsections. Subsection 3.1
gives heuristics and an outline of the proof. Subsection 3.2 is the
technical core of the proof. There, after providing an informal
calculation, we study the environment under the annealed
measure. Subsection 3.3 uses the estimates obtained in the previous
subsection and completes the proof of Theorem~\ref{vann}. The Appendix
contains several auxiliary results and proofs of several lemmas used
in the main part of the paper.

\subsection{Terminology.} Sites at which the potential is equal to $M$
will be called {\it occupied } sites or {\em obstacles}. {\it Vacant}
sites are unoccupied sites.  An interval is {\it empty} if all its
sites are unoccupied.  We reserve the term {\em vacant interval} for maximal
empty intervals. A {\em gap} between two occupied sites is the length
of the vacant interval between these two sites.

\section{Quenched speed}

The results of Theorem~\ref{vque} are rather straightforward.  The
fact that the speed is at most of order $p$ comes immediately from the
fact that no matter what the value of $M$, the time for a conditioned
random walk to traverse an empty interval of size $R$ will be of order
$R^2$, corresponding to speed of order $1/R$.  The upper bound on
speed follows since most sites lie in vacant intervals of size of
order $1/p$. 

We shall need two facts.  The first is a very basic fact
about the standard random walk but we do not have a reference at hand
and, thus, give a proof in the Appendix.
\begin{proposition} \label{basic1} Let $(S_j)_{j\ge 0}$ be the simple
  symmetric random walk, $n\in\IN$, and
  $S_0=k\in\{1,2,\dots,n-1\}$. Then 
\[E^k(\tau_n;\tau_n<\tau_0)=\frac{k(n-k)(n+k)}{3n}.\]
\end{proposition}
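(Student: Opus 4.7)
The plan is to apply the optional stopping theorem to a cubic martingale. A direct computation using $\tfrac12[(S_j+1)^3+(S_j-1)^3]=S_j^3+3S_j$ shows that for the simple symmetric random walk $(S_j)_{j\ge 0}$,
\[
M_j := S_j^3 - 3jS_j
\]
is a martingale. Set $\tau := \tau_n \wedge \tau_0$. Standard gambler's ruin yields $P^k(\tau<\infty)=1$, $E^k[\tau]=k(n-k)<\infty$, and $P^k(\tau_n<\tau_0)=k/n$.

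Apply optional stopping at $\tau\wedge N$ and pass $N\to\infty$ (justified by the dominating bound $|M_{\tau\wedge N}|\le n^3+3n\tau$ together with $E^k[\tau]<\infty$) to obtain $E^k[M_\tau] = M_0 = k^3$. On $\{\tau_n<\tau_0\}$ one has $S_\tau=n$ and $M_\tau = n^3 - 3n\tau_n$; on the complementary event $S_\tau=0$ and $M_\tau=0$. Therefore
\[
k^3 \;=\; n^3\cdot\frac{k}{n} \;-\; 3n\cdot E^k(\tau_n;\tau_n<\tau_0),
\]
and solving for the expectation gives the claimed identity $k(n-k)(n+k)/(3n)$.

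An equally short alternative is first-step analysis. Conditioning on $S_1$ and using $P^k(\tau_n<\tau_0)=k/n$, the function $m(k):=E^k(\tau_n;\tau_n<\tau_0)$ satisfies the discrete Poisson equation
\[
m(k+1)-2m(k)+m(k-1) \;=\; -\frac{2k}{n},\qquad 1\le k\le n-1,
\]
with boundary values $m(0)=m(n)=0$; the cubic particular solution $-k^3/(3n)$ plus a linear homogeneous correction reproduces the same formula. The argument presents no serious obstacle — the only point requiring care, in the martingale route, is the integrability needed to justify optional stopping, which is immediate from $E^k[\tau]<\infty$.
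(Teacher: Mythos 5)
Your proposal is correct, and the primary route is genuinely different from the paper's. The paper proceeds by first-step analysis exactly as in your alternative: it derives the discrete Poisson equation $-\tfrac12\Delta u_k = k/n$ with $u_0=u_n=0$, then guesses the solution $u_k=(3n)^{-1}k(n-k)(n+k)$ by passing to the continuum analogue $-\tfrac12 u''=2x$ on $[0,1]$ and rescaling, and finally verifies the guess directly. Your martingale argument --- optional stopping applied to $M_j=S_j^3-3jS_j$ together with gambler's ruin values $P^k(\tau_n<\tau_0)=k/n$ and $E^k[\tau]=k(n-k)$ --- arrives at the same identity without any guessing or verification step, and the domination $|M_{\tau\wedge N}|\le n^3+3n\tau$ correctly justifies the passage to the limit. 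The trade-off is minor: the paper's method is completely elementary once the ansatz is written down, whereas your method is more self-contained and structured but relies on recognizing the cubic martingale. Both are sound; either would serve.
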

The second is mostly a consequence of the ergodic theorem (see
\cite[(1.30) and Theorem 2.6]{Sz94} for a treatment of Brownian motion
among Poissonian obstacles). The proof is given in the Appendix. 
\begin{proposition} \label{basic2} Let $V(x,\cdot)$, $x\in\IZ$ be
  i.i.d.\ random variables, which satisfy (\ref{pot}). Then there exists
  limit
  \begin{align*}
    \frac{1}{v^\qu}&:=\lim_{y\to\infty}\frac{E_{Q^\omega_y}(\tau_y)}{y}
    =\IE(E^\omega(\tau_1\,|\,\tau_1<\infty))<\infty
    \quad(\IP\text{-a.s.}),\quad \text{and, moreover,}\\
    \frac{1}{v^\qu}&=\frac{d}{d\lambda}\alpha_{\lambda+V}(1)\Big|_{\lambda=0+}.
  \end{align*}
\end{proposition}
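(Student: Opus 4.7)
The plan is to combine a one-step hitting-time decomposition with a Fubini computation and the ergodic theorem. Since any nearest-neighbor path from $0$ to $y>0$ on $\{\tau_y<\infty\}$ must visit $0,1,\dots,y$ in order, one has $\tau_y=\sum_{k=0}^{y-1}(\tau_{k+1}-\tau_k)$. The strong Markov property of the killed walk at each $\tau_k$ shows that, under $Q^\omega_y$, these increments are independent and $\tau_{k+1}-\tau_k$ is distributed as $\tau_{k+1}$ under $P^{\omega,k}(\,\cdot\,\mid\tau_{k+1}<\infty)$, so
\[
E_{Q^\omega_y}(\tau_y)=\sum_{k=0}^{y-1}W_k(\omega),\qquad W_k(\omega):=E^{\omega,k}(\tau_{k+1}\mid\tau_{k+1}<\infty).
\]
Because the killed walk lives in $(-\infty,k]$ up to $\tau_{k+1}$, $W_k$ is a measurable function of $(V(x,\cdot))_{x\le k}$ alone; stationarity and ergodicity of the i.i.d.\ environment then let Birkhoff's theorem yield $y^{-1}E_{Q^\omega_y}(\tau_y)\to\IE[W_0]$ $\IP$-a.s., provided $\IE[W_0]<\infty$.

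Proving this integrability is the main obstacle. A first-step analysis from $0$ (splitting on whether the first move is to $1$, to $-1$, or to $\dagger$, and using the strong Markov property upon a possible return to $0$) produces the identity $W_0=1+(u+t)f\le 2+t$, where $f:=P^\omega(\tau_1<\infty)$, $u:=P^{\omega,-1}(\tau_0<\infty)$, and $t:=E^{\omega,-1}(\tau_0;\tau_0<\infty)$. Translation invariance of the i.i.d.\ law gives $\IE[t]=\IE[E^\omega(\tau_1;\tau_1<\infty)]$, and Fubini rewrites this as $E\bigl[\tau_1\,\prod_x\phi(\ell_x)\bigr]$, where $E$ is the simple-random-walk expectation, $\ell_x$ is the local time at $x$ up to $\tau_1$, and $\phi(m):=\IE[e^{-mV(0)}]\le\phi(1)=:c<1$ (using $\IP(V(0)>0)>0$). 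Since the walk visits every integer between $\min_{j\le\tau_1}S_j$ and $1$, one has $\prod_x\phi(\ell_x)\le c^{|\min_{j\le\tau_1}S_j|}$, and standard gambler's-ruin and exit-time estimates for the simple random walk give $E\bigl[\tau_1\,c^{|\min_{j\le\tau_1}S_j|}\bigr]<\infty$.

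For the second equality, apply the same hitting-time decomposition to the walk killed at rate $\lambda+V$: $P^\omega_\lambda(\tau_y<\infty)=\prod_{k=0}^{y-1}P^{\omega,k}_\lambda(\tau_{k+1}<\infty)$. Each factor lies in $[\tfrac12 e^{-\lambda-V(k,\omega)},1]$, so $\log P^{\omega,k}_\lambda(\tau_{k+1}<\infty)$ is in $L^1(\IP)$ and Birkhoff gives $\alpha_{\lambda+V}(1)=-\IE[\log P^\omega_\lambda(\tau_1<\infty)]$. For each $\omega$, $\lambda\mapsto-\log P^\omega_\lambda(\tau_1<\infty)$ is concave (as $-\log$ of a Laplace transform in $\lambda$), so the nonnegative difference quotient $\lambda^{-1}\bigl[\log P^\omega_0(\tau_1<\infty)-\log P^\omega_\lambda(\tau_1<\infty)\bigr]$ is monotone nonincreasing in $\lambda>0$ and increases as $\lambda\downarrow 0$ to the right derivative $E^\omega(\tau_1\mid\tau_1<\infty)=W_0(\omega)$. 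Monotone convergence then yields $\frac{d}{d\lambda}\alpha_{\lambda+V}(1)\bigl|_{\lambda=0+}=\IE[W_0]=1/v^\qu$.
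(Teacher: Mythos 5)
Your proof follows essentially the same route as the paper: a site-by-site hitting-time decomposition combined with Birkhoff's ergodic theorem, Fubini plus gambler's-ruin estimates to verify integrability of $E^\omega(\tau_1\mid\tau_1<\infty)$, and concavity of $\alpha_{\lambda+V}(1)$ together with monotone convergence to pass to the right derivative at $\lambda=0$. The only cosmetic differences are that you obtain the integrability bound via the first-step identity $W_0\le 2+t$ rather than the paper's direct bound $W_0\le 2e^{V(0)}E^\omega(\tau_1;\tau_1<\infty)$ (both then reduce to $E[\tau_1\,\prod_x\phi(\ell_x)]<\infty$ via the depth of the leftward excursion), and you prove the concavity of $\lambda\mapsto-\log P^\omega_\lambda(\tau_1<\infty)$ directly from log-convexity of the Laplace transform where the paper cites Zerner.
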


The key idea for calculation of the quenched speed is an observation that
the main contribution to $\IE[
E^\omega(\tau_1\,|\,\tau_1<\infty)]$ comes from paths which hit
$1$ before entering $(-\infty,-a_1]$ where $-a_1=\max\{x\le 0:
V(x,\cdot)=M\}$ at some positive time.  Our first step is to compute the
main term.

\begin{lemma} \label{escape} For every $a_1\in\IN\cup\{0\}$ and $p\in(0,1]$\[ \IE[
  E^\omega(\tau_1\,|\,\tau_1<\tau_{-a_1})] = \frac{2-p+2p^2}{3p}.
\]
\end{lemma}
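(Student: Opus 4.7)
The plan is to decompose $\IE[E^\omega(\tau_1 \mid \tau_1 < \tau_{-a_1})]$ according to the value of the random variable $a_1 = a_1(\omega)$, which under the Bernoulli potential \eqref{bp} is geometrically distributed with $\IP(a_1 = k) = (1-p)^k p$ for $k \in \IN \cup \{0\}$. The pivotal observation is that on the event $\{a_1 = k\}$ with $k \ge 1$, the very definition of $a_1$ forces $V(-k+1) = \cdots = V(0) = 0$; hence on $\{\tau_1 < \tau_{-k}\}$ every site $S_0,\ldots,S_{\tau_1-1}$ lies in this obstacle-free block, the Feynman--Kac weight $\exp\bigl(-\sum_{i=0}^{\tau_1-1} V(S_i,\omega)\bigr)$ is identically $1$, and both $P^\omega(\tau_1 < \tau_{-k})$ and $E^\omega\bigl(\tau_1\,\I_{\{\tau_1 < \tau_{-k}\}}\bigr)$ coincide with their simple symmetric random walk counterparts starting from $0$.

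For $k \ge 1$ I would shift indices by $+k$ and invoke Proposition~\ref{basic1} with $n = k+1$ and starting position $k$, obtaining
\[
E^k(\tau_{k+1};\,\tau_{k+1} < \tau_0) = \frac{k(2k+1)}{3(k+1)}.
\]
Combined with the gambler's ruin identity $P^k(\tau_{k+1} < \tau_0) = k/(k+1)$, this yields the \emph{deterministic} value $E^\omega(\tau_1 \mid \tau_1 < \tau_{-k}) = (2k+1)/3$ on $\{a_1 = k\}$. The residual case $k = 0$ is outside the scope of Proposition~\ref{basic1} and must be handled by hand: on $\{a_1=0\}$ one has $\{\tau_1 < \tau_0\} = \{S_1 = 1\}$, so $\tau_1 \equiv 1$ on this event and the conditional expectation equals $1$.

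Putting the contributions together gives
\[
\IE[E^\omega(\tau_1 \mid \tau_1 < \tau_{-a_1})] = p\cdot 1 \;+\; \sum_{k \ge 1} p(1-p)^k \cdot \frac{2k+1}{3},
\]
and the identities $\sum_{k \ge 1}(1-p)^k = (1-p)/p$ and $\sum_{k \ge 1} k(1-p)^k = (1-p)/p^2$ reduce the right-hand side to $(2 - p + 2p^2)/(3p)$. The crucial conceptual step is the first---recognizing that conditioning on $\{\tau_1 < \tau_{-a_1}\}$ confines the walk to a random but obstacle-free interval, so that the quenched conditional expectation becomes a deterministic function of $a_1$ alone. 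After that, the proof reduces to the one-dimensional SSRW hitting-time formula of Proposition~\ref{basic1} and a geometric-series manipulation, with the only minor subtlety being the separate treatment of $a_1 = 0$ where Proposition~\ref{basic1} does not directly apply.
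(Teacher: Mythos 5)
Your proof is correct and follows the same route as the paper's: decompose over the geometric law of $a_1(\omega)$, treat $a_1=0$ by hand (conditional expectation equals $1$), invoke Proposition~\ref{basic1} together with the gambler's ruin probability to get the deterministic value $(2k+1)/3$ on $\{a_1=k\}$ for $k\ge 1$, and sum the geometric series. The only difference is that you make explicit the intermediate observation that on $\{a_1=k\}$ the interval $\{-k+1,\dots,0\}$ is obstacle-free so the quenched conditional expectation coincides with the SSRW one — a step the paper's terse proof leaves implicit when it cites Proposition~\ref{basic1}.
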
 
\begin{proof}
  Notice that our definition of $\tau_0$ (see (\ref{taux})) implies
  $E^\omega(\tau_1\,|\,\tau_1<\tau_0)=1$. By Proposition~\ref{basic1}
  we have
  \begin{align*}
    \IE [E^\omega(\tau_1\,|\,\tau_1<\tau_{-a_1})]=
    p+p\sum_{a=1}^\infty
    \frac{2a_1+1}{3}\,(1-p)^{a_1}=
    p+\frac{1-p}{3}\left(\frac{2}{p}+1\right)= \frac{2-p+2p^2}{3p}.
  \end{align*}
\end{proof}
Given $\omega$, let $-a_1>-a_2>\dots$ be the occupied sites in
$(-\infty,0]$, $I_j=[-a_{j+1},-a_j)$, $j\in\IN$, and
$I=[-a_1,1)$. Then
\begin{multline}\label{dec1}
  \IE[ E^\omega(\tau_1\,|\,\tau_1<\infty)]=\IE[
  E^\omega(\tau_1\I_{\{\tau_1<\tau_{-a_1}\}}\,|\,\tau_1<\infty)]
  \\ +\IE[
  E^\omega(\I_{\{\tau_1>\tau_{-a_1}\}}\sum_{n=0}^{\tau_1-1}\I_{\{S_n\in
    I\}}\,|\,\tau_1<\infty)]+ \sum_{j=1}^\infty\IE[
  E^\omega(\I_{\{\tau_1>\tau_{-a_1}\}}\sum_{n=0}^{\tau_1-1}\I_{\{S_n\in
    I_j\}}\,|\,\tau_1<\infty)].
\end{multline}
Observe also that the first term in the right hand side of (\ref{dec1})
equals
\begin{multline}\label{dec2}
  \IE[ E^\omega(\tau_1\,|\,\tau_1<\tau_{-a_1})
  P^{\omega}(\tau_1<\tau_{-a_1}\,|\,\tau_1<\infty)]=
  \\\IE[E^\omega(\tau_1\,|\,\tau_1<\tau_{-a_1})]-
  \IE[E^\omega(\tau_1\,|\,\tau_1<\tau_{-a_1})P^{\omega}(\tau_1>\tau_{-a_1}\,|\,\tau_1<\infty)].
\end{multline}
We shall need the following three elementary lemmas.
\begin{lemma}\label{elem1} For $a_1\in\IN\cup\{0\}$
  \begin{equation*}
    P^{\omega}(\tau_1>\tau_{-a_1}\,|\,\tau_1<\infty)\le
    e^{-M}\wedge ((2a_1(e^M-1)+1)(1+a_1))^{-1}.
  \end{equation*}
\end{lemma}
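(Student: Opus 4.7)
The strategy is to reduce both inequalities to estimates on the single quantity $\pi:=P^{\omega,-a_1}(\tau_1<\infty)$, the survival-to-$1$ probability of a killed walk started at the nearest obstacle to the left of the origin. First assume $a_1\ge 1$, so that the interval $(-a_1,1)$ is entirely vacant and until the walk exits via $-a_1$ or $1$ it behaves as a simple symmetric random walk on $[-a_1,1]$ with no killing. Gambler's ruin gives $P^\omega(\tau_1<\tau_{-a_1})=a_1/(a_1+1)$ and $P^\omega(\tau_{-a_1}<\tau_1)=1/(a_1+1)$; combining these with the strong Markov property at $\tau_{-a_1}$ yields
\[ P^\omega(\tau_1<\infty)=\frac{a_1+\pi}{a_1+1}\qquad\text{and}\qquad P^\omega(\tau_1>\tau_{-a_1}\mid\tau_1<\infty)=\frac{\pi}{a_1+\pi}. \]

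For the bound by $e^{-M}$ the observation is elementary: starting at the occupied site $-a_1$, the walk survives its very first step with probability exactly $e^{-M}$, so $\pi\le e^{-M}$, and since $a_1\ge 1$ one has $\pi/(a_1+\pi)\le\pi\le e^{-M}$.

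For the tighter bound I perform a first-step analysis at $-a_1$: conditional on the walk jumping right (probability $e^{-M}/2$), a further application of gambler's ruin on the empty segment $[-a_1,1]$ shows that from $-a_1+1$ it hits $1$ before returning to $-a_1$ with probability $1/(a_1+1)$ and otherwise restarts at $-a_1$; conditional on jumping left (probability $e^{-M}/2$), the walk returns to $-a_1$ with some probability $\rho\le 1$, which absorbs all the complexity of the unknown environment on $(-\infty,-a_1-1]$. This yields the renewal equation
\[ \pi=\frac{e^{-M}}{2}\Bigl(\frac{1}{a_1+1}+\frac{a_1}{a_1+1}\pi\Bigr)+\frac{e^{-M}}{2}\rho\pi, \]
whose solution with $\rho\le 1$ gives the explicit bound $\pi\le e^{-M}/[2(a_1+1)-e^{-M}(2a_1+1)]$. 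Substituting this into the increasing function $\pi/(a_1+\pi)$ and simplifying (the identity $(a_1+1)(2a_1-1)=2a_1^2+a_1-1$ is what makes the expression collapse) produces exactly $1/[(a_1+1)(2a_1(e^M-1)+1)]$.

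The boundary case $a_1=0$ is handled directly: the second bound is trivially $1$, while for the first, the walk's initial step must be to $-1$ (probability $e^{-M}/2$) for $\{\tau_0<\tau_1<\infty\}$ to occur, and the strong Markov property at $\tau_0$ then gives the conditional bound $e^{-M}/2\le e^{-M}$. The main obstacle is the algebraic simplification in the last step of the finer bound; the pleasant feature of the argument is that the crude estimate $\rho\le 1$ is already enough to produce the exact claimed form.
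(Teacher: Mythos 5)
Your proof is correct and uses essentially the same approach as the paper: a first-step decomposition at $-a_1$, a gambler's-ruin computation on the vacant segment, and the crude bound (your $\rho\le 1$) that collapses whatever happens on $(-\infty,-a_1-1]$ into a factor of at most one. The only cosmetic difference is that you track $\pi=P^{\omega,-a_1}(\tau_1<\infty)$ and simplify $\pi/(a_1+\pi)$, while the paper tracks $P^{\omega,-a_1}(\tau_0<\infty)$ and multiplies by $1/(a_1+1)$; these agree since $P^{\omega,-a_1}(\tau_1<\infty)=P^{\omega,-a_1}(\tau_0<\infty)\,P^{\omega}(\tau_1<\infty)$.
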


\begin{lemma} \label{elem2} There is a constant $C_1$ such that
  \begin{equation*}
    E^\omega(\I_{\{\tau_1>\tau_{-a_1}\}}\sum_{n=0}^{\tau_1-1}\I_{\{S_n\in
    I\}}\,|\,\tau_1<\infty)\le C_1(e^M-1)^{-1}.
  \end{equation*}
\end{lemma}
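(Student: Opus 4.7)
The plan is to split $\sum_{n=0}^{\tau_1-1}\I_{\{S_n\in I\}}$ at the first hitting time $\tau_{-a_1}$, apply the strong Markov property, and exploit the fact that on $\{\tau_1>\tau_{-a_1}\}$ the walk must traverse the obstacle at $-a_1$ and pay a survival factor $e^{-M}$. Set $h:=P^{\omega,-a_1}(\tau_1<\infty)$.

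On $\{\tau_{-a_1}<\tau_1\}$ the walk is confined to the vacant block $\{-a_1+1,\ldots,0\}\subset I$ up to time $\tau_{-a_1}$, where it moves as a simple random walk; hence the pre-$\tau_{-a_1}$ piece of the sum equals $\tau_{-a_1}$ itself, and Proposition~\ref{basic1} (after translation and reflection) gives $E^0(\tau_{-a_1};\,\tau_{-a_1}<\tau_1)=a_1(a_1+2)/(3(a_1+1))$. The strong Markov property at $\tau_{-a_1}$ multiplies this contribution by the factor $h$ on $\{\tau_1<\infty\}$.

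For the post-$\tau_{-a_1}$ piece I would analyze the walk restarted at $-a_1$ through an excursion decomposition. Each excursion out of $-a_1$ either (i) is killed there (probability $1-e^{-M}$), (ii) steps right and runs as a simple random walk on $\{-a_1,\ldots,1\}$ starting at $-a_1+1$, contributing expected time at most $a_1$ in $I\setminus\{-a_1\}$ before returning to $-a_1$ or reaching $1$, or (iii) steps left and contributes no further time in $I$. The per-excursion probabilities of reaching $1$ and of returning to $-a_1$ are $e^{-M}/(2(a_1+1))$ and at most $e^{-M}$, respectively, so summing the geometric series gives the key bound $h\le(2(a_1+1)(e^M-1))^{-1}$---the same estimate underlying Lemma~\ref{elem1}. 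A Green's-function computation based on these excursion probabilities and the explicit Green's function of simple random walk on $\{-a_1,\ldots,1\}$ then bounds the post-$\tau_{-a_1}$ contribution by a constant multiple of $a_1/(e^M-1)$.

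Dividing the sum of the two pieces by $P^\omega(\tau_1<\infty)\ge a_1/(a_1+1)$ produces the claim. The principal technical obstacle is uniformity in $M$: the naive lower bound $1-e^{-M}$ for one minus the per-excursion return probability is adequate for large $M$ but degenerates as $M\to 0$, so one must additionally use the universal lower bound $1/(2(a_1+1))$ coming from the explicit formula for the return probability in order to keep the Green's-function contribution from visits to $-a_1$ of order $a_1/(e^M-1)$ uniformly in $(a_1,M)$.
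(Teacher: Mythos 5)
Your decomposition at $\tau_{-a_1}$, the bound $h=P^{\omega,-a_1}(\tau_1<\infty)\le (2(a_1+1)(e^M-1))^{-1}$, the $O(a_1)$ per-excursion duration from Proposition~\ref{basic1}, and the remark that for small $M$ one must supplement $1-e^{-M}$ with the escape probability $\tfrac{1}{2(a_1+1)}$ are all the same ingredients the paper uses, so the route is right. However, there is a genuine accounting gap in the last step. You bound the restarted expectation $E^{\omega,-a_1}(\sum_{n<\tau_1}\I_{\{S_n\in I\}};\tau_1<\infty)$ by $Ca_1/(e^M-1)$ (which is the correct order), but the contribution of the post-$\tau_{-a_1}$ times to the numerator $E^\omega(\I_{\{\tau_1>\tau_{-a_1}\}}\sum\ldots;\tau_1<\infty)$ is, by the strong Markov property at $\tau_{-a_1}$, this restarted expectation \emph{multiplied by} $P^\omega(\tau_{-a_1}<\tau_1)=\tfrac{1}{a_1+1}$. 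Your final step "divide the sum of the two pieces by $P^\omega(\tau_1<\infty)\ge a_1/(a_1+1)$" applied to a post-piece of size $Ca_1/(e^M-1)$ produces $C(a_1+1)/(e^M-1)$, which is \emph{not} uniformly bounded in $a_1$. The missing factor $\tfrac{1}{a_1+1}$ is exactly what cancels the $a_1$; the paper makes this explicit ("the total is multiplied by $P^{\omega}(\tau_{-a_1}<\tau_1)$"). With that factor restored, the post-piece contribution is $\le \tfrac{1}{a_1+1}\cdot\tfrac{Ca_1}{e^M-1}\le \tfrac{C}{e^M-1}$ and the division by $P^\omega(\tau_1<\infty)\ge 1/2$ costs only a constant.

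A secondary point: you do not treat $a_1=0$, where your formulas degenerate ($P^\omega(\tau_1<\infty)\ge a_1/(a_1+1)=0$, and there is no vacant block $\{-a_1+1,\dots,0\}$). The paper handles this case separately, bounding the expected number of returns to $0$ before $\tau_1$ by $3e^{-M}\le 3(e^M-1)^{-1}$. You should include a short argument for this case as well.
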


\begin{lemma} \label{elem3} There is a constant $C_2$ such that for
  every $j\in\IN$
\begin{equation*}
  E^\omega(\I_{\{\tau_1>\tau_{-a_1}\}}\sum_{n=0}^{\tau_1-1}\I_{\{S_n\in
    I_j\}}\,|\,\tau_1<\infty)\le \frac{C_2e^{-Mj}} 
  {(1-e^{-M})^2}\frac{|I_j|}{|I|^2},
\end{equation*}
where $|A|$ is the Lebesgue measure of the set $A$.
\end{lemma}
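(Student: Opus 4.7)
The plan is to factor the conditional expectation via the strong Markov property at $\tau_{-a_1}$. Since $I_j\subset(-\infty,-a_1)$ for $j\ge 1$, any visit to $I_j$ before $\tau_1$ automatically forces $\tau_{-a_1}<\tau_1$, so the indicator $\I_{\{\tau_1>\tau_{-a_1}\}}$ is redundant on the event of interest. Strong Markov at $\tau_{-a_1}$ combined with the fact that $S_n\notin I_j$ for $n<\tau_{-a_1}$ then yields the factorization
\[
E^\omega\!\left(\I_{\{\tau_1>\tau_{-a_1}\}}\sum_{n<\tau_1}\I_{\{S_n\in I_j\}}\,\Big|\,\tau_1<\infty\right) = P^\omega(\tau_1>\tau_{-a_1}\mid\tau_1<\infty)\cdot E_{Q^{\omega,-a_1}_1}\!\left(\sum_{n<\tau_1}\I_{\{S_n\in I_j\}}\right),
\]
where $Q^{\omega,-a_1}_1$ denotes the quenched path measure of the killed walk started at $-a_1$ and conditioned on $\tau_1<\infty$. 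The first factor is precisely the quantity bounded by Lemma~\ref{elem1}, which (using its second bound) contributes a term of order $e^{-M}/((1-e^{-M})|I|^2)$; this is the source of the $|I|^{-2}$ and one of the two $(1-e^{-M})^{-1}$ factors appearing in the lemma.

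For the second factor, setting $u(y)=P^{\omega,y}(\tau_1<\infty)$ and $G^\omega(y,x)=E^{\omega,y}(L_x^{\tau_1})$ and applying the strong Markov property at each visit to $x$, one obtains
\[
E_{Q^{\omega,-a_1}_1}\!\left(\sum_{n<\tau_1}\I_{\{S_n\in I_j\}}\right) = \frac{1}{u(-a_1)}\sum_{x\in I_j} u(x)\,P^{\omega,-a_1}(\tau_x<\tau_1)\,G^\omega(x,x).
\]
I would bound the three ingredients as follows. First, the SRW-weighting identity $P^{\omega,y}(\tau_z<\tau_1)=E^{\mathrm{SRW},y}(\I_{\{\tau_z<\tau_1\}}e^{-\sum_{n<\tau_z}V(S_n)})$ combined with the observation that any nearest-neighbor path from $-a_1$ to $x<-a_j$ visits each of $-a_1,\ldots,-a_j$ at least once, together with the gambler's ruin probability $|I|/(|x|+1)$ for plain SRW, gives $P^{\omega,-a_1}(\tau_x<\tau_1)\le e^{-Mj}|I|/(|x|+1)$. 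Second, iterating the estimate $P^{\omega,-a_k}(\tau_{-a_{k-1}}<\infty)\le e^{-M}$—which follows from first-step analysis, since any progress from the obstacle $-a_k$ requires surviving the first step with probability $e^{-M}$—and using the monotonicity of $u$ yields the strengthened bound $u(x)\le u(-a_j)\le e^{-M(j-1)}u(-a_1)$. Third, a renewal-type excursion decomposition of the killed walk inside the vacant neighborhood of $x$, using that each bounce off an obstacle bordering $I_j$ kills the walk with probability $1-e^{-M}$, provides the local Green's function estimate responsible for the second factor $(1-e^{-M})^{-1}$.

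Assembling these three bounds and summing over $x\in I_j$, using $|x|+1\ge|I|$ in this range and cancelling $u(-a_1)$, the second factor is bounded by a constant times $e^{-M(2j-1)}|I_j|/(1-e^{-M})$. Multiplying by the bound from Lemma~\ref{elem1} produces the claimed estimate $C_2 e^{-Mj}|I_j|/((1-e^{-M})^2|I|^2)$. The principal technical obstacle is the delicate bookkeeping of the three factors $e^{-Mj}$, $|I_j|/|I|^2$, and $(1-e^{-M})^{-2}$: naive estimates using the SRW-dominance bound $G^\omega(0,x)\le 2$ together with $u(x)\le u(-a_j)$ overshoot by polynomial powers of $|I|$ and by a factor of $e^{-Mj}$, so the sharpness requires combining Lemma~\ref{elem1} (which captures $|I|^{-2}$ and one $(1-e^{-M})^{-1}$), the strengthened monotonicity bound $u(x)\le e^{-M(j-1)}u(-a_1)$ (exploiting $u(-a_1)\le e^{-M}$), and the excursion analysis of $G^\omega(x,x)$ inside $I_j$, with the two $(1-e^{-M})^{-1}$ factors traced to distinct sources.
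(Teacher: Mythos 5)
The factorization through $\tau_{-a_1}$ and the local–time identity
\[
E_{Q^{\omega,-a_1}_1}\!\left(\sum_{n<\tau_1}\I_{\{S_n\in I_j\}}\right)=\frac{1}{u(-a_1)}\sum_{x\in I_j}u(x)\,P^{\omega,-a_1}(\tau_x<\tau_1)\,G^\omega(x,x)
\]
are correct, and in spirit this Green's-function route is a legitimate alternative to the paper's excursion decomposition. The problem is your third ingredient. The sites of $I_j$ other than $-a_{j+1}$ are \emph{vacant}; the walk undergoes no killing there, only SRW steps. Consequently $G^\omega(x,x)$ dominates the Green's function of plain SRW on $(-a_{j+1},-a_j)$ with absorbing endpoints, which is of order $\dfrac{(x+a_{j+1})(-a_j-x)}{|I_j|}$ and hence can be as large as $\Theta(|I_j|)$ for $x$ near the middle of the gap. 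The bound $G^\omega(x,x)\le C/(1-e^{-M})$ is therefore false, and the proposed ``bounce off an obstacle'' renewal argument only controls the local time \emph{at the two endpoints} of $I_j$, not inside the interior. Feeding the true order $\Theta(|I_j|)$ into your assembly (together with $u(x)\le e^{-M(j-1)}u(-a_1)$ and $P^{\omega,-a_1}(\tau_x<\tau_1)\le e^{-Mj}|I|/(|x|+1)$) yields a bound of order $e^{-M(2j-1)}|I_j|^2$ for the second factor, off by a full factor of $|I_j|$ from what the lemma requires.

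The argument can in principle be salvaged, but only by replacing your gambler's-ruin bound $P^{\omega,-a_1}(\tau_x<\tau_1)\le e^{-Mj}|I|/(|x|+1)$ with the sharper two-step estimate $P^{\omega,-a_1}(\tau_{-a_j}<\tau_1)\cdot P^{\omega,-a_j}(\tau_x<\tau_1)$, the second factor of which decays like $e^{-M}/(-a_j-x)$ (the walk must penetrate depth $-a_j-x$ into the gap starting from its edge). This $1/(-a_j-x)$ exactly cancels the $(-a_j-x)$ growth of $G^\omega(x,x)$, so $P^{\omega,-a_1}(\tau_x<\tau_1)G^\omega(x,x)$ is $O\!\left(e^{-Mj}\dfrac{|I|(x+a_{j+1})}{(a_j+1)|I_j|}\right)$ and the $x$-sum gives $O(|I_j|)$ as needed. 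The paper sidesteps this cancellation entirely by decomposing the total time in $I_j$ into excursions launched from $\{-a_j,-a_{j+1}\}$: because an excursion starts adjacent to the boundary, Proposition~\ref{basic1} gives expected duration $O(|I_j|)$ (linear, not quadratic), while the geometric number of excursions supplies the factor $e^{-M}/(1-e^{-M})$. As written, your proof has a genuine gap at bound 3, and bound 1 is too lossy to compensate.
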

Let us assume these facts (see Appendix for proofs) and derive
Theorem~\ref{vque}.
\begin{proof}[Proof of Theorem~\ref{vque}]
  The right hand side of the inequality in Lemma~\ref{elem1} does not
  exceed $e^{-M}/(1+a_1)$. Therefore, 
  \begin{multline*}
    \IE[E^\omega(\tau_1\,|\,\tau_1<\tau_{-a_1}) 
P^{\omega}(\tau_1>\tau_{-a_1}\,|\,\tau_1<\infty)]\le\\
    e^{-M}+pe^{-M}\sum_{a_1=1}^\infty
    (1-p)^{a_1}\frac{2a_1+1}{3(a_1+1)}<
    e^{-M}+\frac{2pe^{-M}}{3}\sum_{a_1=1}^\infty (1-p)^{a_1}\le
    \frac{5}{3}\,e^{-M}.
  \end{multline*}
This immediately gives 
  \begin{align*}
    &\lim_{p\to
      0}p\,\IE[E^\omega(\tau_1\,|\,\tau_1<\tau_{-a_1})
    P^{\omega}(\tau_1>\tau_{-a_1}\,|\,\tau_1<\infty)]=0;\\
    &\lim_{M\to
      \infty}\IE[E^\omega(\tau_1\,|\,\tau_1<\tau_{-a_1}) 
    P^{\omega}(\tau_1>\tau_{-a_1}\,|\,\tau_1<\infty)]=0.
  \end{align*}
  Lemma~\ref{elem2} takes care of the second term in the right hand
  side of (\ref{dec1}). By Lemma~\ref{elem3} and the independence of
  the values of the potential at distinct sites, the last term in
  (\ref{dec1}) is bounded by (we defined the function $(1-p)^{-1}\ln(1/p)$ to be
$1$ at $p=1$ by continuity)
\begin{multline*}
  \frac{C_2}{(1-e^{-M})^2}\sum_{j=1}^\infty
  e^{-Mj}\IE[(a_{j+1}-a_j)]\IE[(a_1+1)^{-2}]\\\le\frac{C_2}
  {(1-e^{-M})^2}\sum_{j=1}^\infty e^{-Mj}\,\frac{\ln
    (1/p)}{1-p}=\frac{C_2e^{-M}\ln
   (1/p)}{(1-p)(1-e^{-M})^3}.
\end{multline*}

This expression clearly vanishes as $M\to \infty$ locally uniformly in
$p\in(0,1]$. After multiplication by $p$ it converges to $0$ as $p\to
0$ uniformly on every interval $[M_0,\infty)$. The only term left in
  the right hand side of (\ref{dec1}) and (\ref{dec2}) is the main
  term, $\IE[ E^\omega(\tau_1\,|\,\tau_1<\tau_{-a_1})]$, which has the
  claimed asymptotics by Lemma~\ref{escape}.
\end{proof}

\section{Annealed speed}
We start by introducing additional notation.
Let \[Q_{0,y}(\cdot):=Z_{0,y}^{-1}\IE[P^\omega (\ \cdot\
;\tau_y<\tau_0,\tau_y<\infty)],\quad
Z_{0,y}:=\IE[P^\omega(\tau_y<\tau_0,\tau_y<\infty)].\] The corresponding
quenched path measure $Q_{0,y}^\omega$ is given
by \[Q_{0,y}^\omega(\cdot)=(Z^\omega_{0,y})^{-1} {P}^\omega(\ \cdot\
;\tau_y<\tau_0,\tau_y<\infty),\quad Z^\omega_{0,y}:=
{P}^\omega(\tau_y<\tau_0,\tau_y<\infty). \]

\subsection{Heuristics and goals.} 
Our first observation is that we can replace the measure $Q_y$ with
$Q_{0,y}$ (see Proposition 3.1, (3.4), and the proof of (1.7) in
\cite{KM12}).  The key ingredient of the proof of Theorem~\ref{vann}
is the study of environments under $Q_{0,y}$. We show that under
$Q_{0,y}$ the distribution of gaps between occupied sites is
``comparable'' to a product of log-series
distributions\footnote{Random variable $R$ is said to have a log-series
  distribution with parameter $p$ if $P(R=r)=C_pp^r/r$, $r\in\IN$,
  where $C_p=-(\ln(1-p))^{-1}$.}  with the average gap $g_\an$,
where for fixed $M$ and small $p$ or fixed $p$ and large $M$
\begin{equation}
\label{gap} 
\log g_\an\sim K(p,M):=2p^{-1}(1-p)(e^M-1).
\end{equation}
For i.i.d.\ Bernoulli potentials the gap distribution is geometric,
and we already have the result that the reciprocal of the quenched
speed is proportional to the average gap between two occupied sites,
which is now $g_\an$. This observation together with (\ref{gap}) leads
to the limits (\ref{ap}) and (\ref{aM}).

We shall give a detailed proof of (\ref{ap}). The proof of (\ref{aM})
is very similar but easier and is omitted but we shall write all steps
in such a way that they can be readily adapted to the case when
$M\to\infty$ and $p$ is fixed. An informal derivation of the formula
for $K(p,M)$ is given in the next subsection right after
Corollary~\ref{sp}.

Our goal will be to construct subsets of environments that are
essential and on which the walk has the claimed 
speed behavior. More precisely, to obtain a lower bound on $-\log
v^{ann}_{p,M}$ we shall restrict $Q_{0,y}$ to environments
$\Omega^1_y=\Omega^1_y(\epsilon,p,M)$ with the following properties: for every
$\epsilon>0$ there is $p_0=p_0(\epsilon)$ such that for each $p<p_0$
there is $y_0=y_0(p)$ such that for all $y>y_0$
\begin{itemize}
\item [(L1)] 
    $Q_{0,y}(\Omega^1_y)\ge 1/2$ and
\item [(L2)] for every $\omega\in\Omega^1_y$ 
  \begin{equation*}
    E_{Q_{0,y}^\omega}(\tau_y)\ge
    C_1ye^{(1-\epsilon)K(p,M)},
  \end{equation*}
\end{itemize}
where $C_1$ does not depend on $y,\omega,p$. Then
\begin{multline*}
\frac{E_{Q_{0,y}}(\tau_y)}{y}\ge
(yZ_{0,y})^{-1}\IE(E_{Q^\omega_{0,y}}(\tau_y)Z^\omega_{0,y};
\Omega^1_y) \ge C_1e^{(1-\epsilon)K(p,M)}\,
Q_{0,y}(\Omega^1_y)\ge \frac{C_1}{2}\,e^{(1-\epsilon)K(p,M)},
\end{multline*}
and, hence,
\begin{equation}
  \label{lb}
  -\liminf_{p\to 0}p\log v^{ann}_{p,M}\ge
(1-\epsilon)\lim_{p\to 0}pK(p,M)=2(1-\epsilon)(e^M-1).
\end{equation}
For an upper bound we shall consider environments
$\Omega^2_y=\Omega^2_y(\epsilon,p,M)$ for which the following holds:
for every $\epsilon>0$ there is $p_0=p_0(\epsilon)$ such that for each
$p<p_0$
\begin{itemize}
\item [(U1)] $\lim\limits_{y\to\infty}y\,Q_{0,y}(\Omega\setminus
  \Omega^2_y)=0$ and
\item [(U2)] there is $y_0=y_0(p)$ such that for all $y>y_0$,
  $\omega\in \Omega^2_y$
\[E_{Q_{0,y}^\omega}(\tau_y)\le C_2y
  e^{(1+\epsilon)K(p,M)},\]
\end{itemize}
where $C_2$ does not depend on $y,\omega,p$. Then
\begin{multline*}
\frac{E_{Q_{0,y}}(\tau_y)}{y}\le
(yZ_{0,y})^{-1}\left(\IE(E_{Q^\omega_{0,y}}(\tau_y)Z^\omega_{0,y};
\Omega^2_y)
+\IE(E_{Q^\omega_{0,y}}(\tau_y)Z^\omega_{0,y};
\Omega\setminus\Omega^2_y)\right)\\ \le
C_2e^{(1+\epsilon)K(p,M)}+(yZ_{0,y})^{-1}\IE(E_{Q^\omega_{0,y}}(\tau_y)Z^\omega_{0,y};
\Omega\setminus\Omega^2_y).
\end{multline*}
By Lemma~\ref{A} (see Appendix), 
$E_{Q^\omega_{0,y}}(\tau_y)\le 3y^2$. Combining this with (U1)
we get
\begin{equation}
  \label{ub}
  -\limsup_{p\to 0}p\log v^{ann}_{p,M}\le
(1+\epsilon)\lim_{p\to 0}pK(p,M)=2(1+\epsilon)(e^M-1).
\end{equation}
Since $\epsilon$ is arbitrary, relations (\ref{lb}) and (\ref{ub})
imply (\ref{ap}). Our task will be to construct $\Omega^i_y$, $i=1,2$,
with the desired properties. The starting point for obtaining (L2) and
(U2) is Lemma~\ref{U} which gives bounds on
$E_{Q^\omega_{0,y}}(\tau_y)$ in terms of gaps between obstacles.  The
construction of $\Omega^i_y$, $i=1,2$, will be carried out in
Subsection~\ref{last} after we obtain information about a typical
environment under the annealed measure $Q_{0,y}$. The latter is the
content of the next subsection.

\subsection{Environment under the annealed measure}
 
\begin{lemma}
  \label{F} Let $x_0:=0<x_1<\dots<x_n$, $r_i=x_i-x_{i-1}$,
  $i=1,2,\dots,n$, and consider an environment such that
  $\{x_1,x_2,\dots,x_n\}$, $n\in\IN$, is the set of all occupied sites
  in $(0,x_n]$.  Denote by $u_n$ the probability that a random walk
  starting at $x_{n-1}$ reaches $x_n$ before hitting $0$, i.e.\
  $u_n={P}^{x_{n-1},\omega}(\tau_{x_n}<\tau_0)$, $n\in\IN$. Then
  \begin{equation}
    \label{u_n}
    u_1=\frac{e^{-V(0,\omega)}}{2r_1}=e^{M-V(0,\omega)}F_M(0,r_1,0); 
\  u_n=F_M(r_{n-1},r_n,u_{n-1}),\ n>1,
\end{equation}
where 
$F_M:(\IN\cup\{0\})\times\IN\times[0,1]\to [0,1]$,
\begin{equation}\label{defF}
    F_M(\ell,r,u)=
    \begin{cases}
      \dfrac{e^{-M}}{2r}\,\left(1- e^{- M}\left(1- \dfrac{1}{2 r}
        - \dfrac{1-u}{2 \ell} \right)\right)^{-1},&\text{if } \ell\ne 0;\\[3mm]
    \dfrac{e^{-M}}{2r},&\text{if } \ell=0.
    \end{cases}
\end{equation}
\end{lemma}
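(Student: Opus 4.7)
I would prove the lemma by an excursion decomposition from $x_{n-1}$, combining the strong Markov property with gambler's-ruin probabilities for simple random walk in vacant intervals. The base case $n=1$ is immediate by conditioning on the first step of the walk at $0$: it is killed with probability $1-e^{-V(0,\omega)}$; with probability $e^{-V(0,\omega)}/2$ it moves to $-1$, from which any path reaching $x_1>0$ must first revisit $0$ and hence contribute nothing to $\{\tau_{x_1}<\tau_0\}$; with probability $e^{-V(0,\omega)}/2$ it moves to $1$, from where the simple random walk on $\{0,1,\dots,x_1\}$ hits $x_1$ before $0$ with probability $1/r_1$ by gambler's ruin. This gives $u_1=e^{-V(0,\omega)}/(2r_1)$, which matches $e^{M-V(0,\omega)}F_M(0,r_1,0)$ via the $\ell=0$ branch of $F_M$.

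\textbf{Recursion for $n\ge 2$.} I would define an excursion from $x_{n-1}$ as one step (killed with probability $1-e^{-M}$, else to $x_{n-1}\pm 1$ each with probability $e^{-M}/2$) followed by SRW in the entered vacant interval until it either returns to $x_{n-1}$ or first reaches $\{x_{n-2},x_n\}$. Gambler's ruin in the intervals of lengths $r_{n-1}$ and $r_n$ yields the four excursion-end probabilities
\begin{equation*}
A=\frac{e^{-M}}{2r_n},\quad B=\frac{e^{-M}}{2r_{n-1}},\quad K=1-e^{-M},\quad \rho=1-A-B-K,
\end{equation*}
corresponding to success (at $x_n$), escape-left (at $x_{n-2}$), killed, and return, respectively. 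Summing the geometric series over returns before the first non-return excursion, the conditional probability of first ending at $x_n$ is $A/(A+B+K)$, and of first ending at $x_{n-2}$ is $B/(A+B+K)$. After the walk first reaches $x_{n-2}$, the strong Markov property identifies the probability of eventually reaching $x_n$ before $0$ as $u_{n-1}u_n$: the walk must first hit $x_{n-1}$ from $x_{n-2}$ (probability $u_{n-1}$, by the definition of $u_{n-1}$) and then hit $x_n$ from $x_{n-1}$ (probability $u_n$). This yields
\begin{equation*}
u_n \;=\; \frac{A}{A+B+K}\;+\;\frac{B}{A+B+K}\,u_{n-1}u_n,
\end{equation*}
and solving gives $u_n = A/(A+K+B(1-u_{n-1}))$, which rearranges algebraically into $F_M(r_{n-1},r_n,u_{n-1})$.

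\textbf{Main obstacle.} The delicate step is the boundary case $n=2$, where $x_{n-2}=x_0=0$: for the walk starting at $x_1$, reaching $0$ \emph{is} the event $\tau_0$, so the strong-Markov identity $P^{x_{n-2},\omega}(\tau_{x_n}<\tau_0)=u_{n-1}u_n$ needs a careful restart argument that interprets $u_1$ as the probability of returning from $0$ to $x_1$ before the next visit to $0$. Making this restart compatible with the recursion at $n=2$ is where I expect the real care to be required; everything else is routine, reducing to the algebraic identity $A/(A+K+B(1-u))=F_M(\ell,r,u)$ with the stated values of $A,B,K$.
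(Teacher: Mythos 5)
Your base case and the excursion decomposition for $n\ge 3$ are essentially the paper's own argument: the paper conditions on a single step/excursion from $x_{n-1}$ and leaves a linear equation $u_n = e^{-M}\bigl(\tfrac{1}{2r_n}+\tfrac{u_{n-1}u_n}{2r_{n-1}}+u_n(1-\tfrac{1}{2r_n}-\tfrac{1}{2r_{n-1}})\bigr)$ to be solved for $u_n$, whereas you first sum the geometric series over return-excursions and then solve; the algebra collapses to the identical identity $u_n = A/(A+K+B(1-u_{n-1}))=F_M(r_{n-1},r_n,u_{n-1})$, so for $n\ge 3$ the two proofs coincide.

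The $n=2$ worry you raise is not a cosmetic issue, and the ``careful restart argument'' you propose cannot resolve it. The quantity $u_2=P^{x_1,\omega}(\tau_{x_2}<\tau_0)$ involves the hitting time $\tau_0$ of the walk started at $x_1$. Once the left excursion from $x_1$ reaches $0$, $\tau_0$ has occurred and the event $\{\tau_{x_2}<\tau_0\}$ has already failed; there is no sense in which the clock can be restarted so as to credit that scenario with probability $u_1u_2$. Thus for $n=2$ the escape-left contribution is $0$, not $Bu_1u_2$, and the one-step relation reads $u_2=A+\rho u_2$, i.e. $u_2 = F_M(r_1,r_2,0)$ rather than $F_M(r_1,r_2,u_1)$. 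This is easy to confirm by a small example: with $M=\log 2$, $V(0)=0$, $r_1=r_2=1$ one computes directly $u_2 = P^{1}(\tau_2<\tau_0)=\tfrac14$ and $Z^\omega_{0,2}=u_1u_2=\tfrac18$, whereas the stated recursion gives $F_{\log 2}(1,1,\tfrac12)=\tfrac27$ and $u_1u_2=\tfrac17$. The paper's proof applies the same recursion at $n=2$ without comment, so it has exactly the gap you identified; the resulting discrepancy is only a bounded multiplicative factor (it affects $F_M$ through the argument $u_1\in[0,1]$ and hence not the exponential rate in $y$), so it does not threaten the asymptotic results where the lemma is used. Still, as a literal statement the recursion $u_n=F_M(r_{n-1},r_n,u_{n-1})$ holds only for $n\ge 3$, with $u_2=F_M(r_1,r_2,0)$, and your instinct to flag this boundary case — rather than to trust a ``restart'' — was correct.
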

The proof is given in the Appendix. 

\begin{lemma} As $\ell,r\to\infty$
  \begin{equation}
    \label{Fb}
    F_M(\ell,r,u)\sim \frac{e^{- M}}{2 r (1- e^{- M})},
  \quad\text{uniformly in $u\in[0,1]$.}
  \end{equation}
\end{lemma}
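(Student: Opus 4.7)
The plan is essentially a direct computation from the explicit formula \eqref{defF}. The key observation is that the only place where $u$ enters is through the term $(1-u)/(2\ell)$ in the denominator, and this term is bounded in absolute value by $1/(2\ell)$ uniformly in $u \in [0,1]$. So one should rewrite $F_M$ so that the intended limit appears transparently and show the correction is $o(1)$ uniformly.

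Concretely, I would first rewrite
\[
F_M(\ell,r,u) = \frac{e^{-M}}{2r}\cdot \frac{1}{(1-e^{-M}) + e^{-M}\left(\dfrac{1}{2r} + \dfrac{1-u}{2\ell}\right)}
\]
and then form the ratio with the conjectured equivalent:
\[
\frac{F_M(\ell,r,u)}{\dfrac{e^{-M}}{2r(1-e^{-M})}}
= \frac{1-e^{-M}}{(1-e^{-M}) + e^{-M}\left(\dfrac{1}{2r} + \dfrac{1-u}{2\ell}\right)}.
\]
Since $0 \le (1-u)/(2\ell) \le 1/(2\ell)$, the perturbation satisfies
\[
0 \le e^{-M}\left(\frac{1}{2r} + \frac{1-u}{2\ell}\right) \le \frac{e^{-M}}{2}\left(\frac{1}{r}+\frac{1}{\ell}\right),
\]
with the upper bound independent of $u$ and tending to $0$ as $\ell, r \to \infty$. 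Hence the displayed ratio tends to $1$ uniformly in $u \in [0,1]$, which is exactly the claim. The $\ell = 0$ branch of \eqref{defF} does not arise since we are sending $\ell \to \infty$.

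There is no real obstacle here; the only thing to be careful about is the uniformity in $u$, which is handled once one notes that $|1-u|\le 1$ gives a $u$-free bound on the only $u$-dependent summand.
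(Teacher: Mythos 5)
Your computation is correct, and it is the natural direct argument: rewriting the denominator as $(1-e^{-M}) + e^{-M}\left(\tfrac{1}{2r} + \tfrac{1-u}{2\ell}\right)$, forming the ratio, and using $0 \le 1-u \le 1$ to get a $u$-free error bound vanishing as $\ell, r\to\infty$. The paper states this lemma without proof, evidently regarding it as an immediate consequence of the explicit formula \eqref{defF}; your argument supplies exactly the verification one would expect.
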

From now on we shall identify every environment $\omega$ on $(0,y)$
with the vector $\overline{R}_N=(R_1,R_2,\dots,R_N)\in
\cup_{i=1}^\infty\IN^i$ of $N=N(y,\omega)$ successive distances
between occupied sites in $(0,y)$, where $R_1$ is the
distance from the first positive occupied site in $(0,y)$ to the
origin and $R_N$ is the distance from the last occupied site in
$(0,y)$ to $y$. If the interval $(0,y)$ is empty then we set $N=1$ and
$R_1=y$.
\begin{corollary}\label{sp} For any $n\in\{1,2,\dots,y\}$ and
  $(r_1,r_2,\dots,r_n)\in\IN^n$ with $\sum_{i=1}^n r_i=y$,
  $r_0=0$, $u_0=0$,
  \begin{align}
\label{23}
Q_{0,y}&(N=n, \overline{R}_n=(r_1,r_2,\dots,r_n))\nonumber
\\=&Z_{0,y}^{-1}\left(\frac{e^M(1-p)}{p}+1
\right)\prod^n_{i=1} \left(p(1-p)^{r_i-1}\,F_M(r_{i-1}, r_{i},
  u_{i-1})\right)\\=&Z_{0,y}^{-1}(1-p)^y\left(\frac{e^M}{\rho}+1
\right)\prod^n_{i=1} \left(\rho\,F_M(r_{i-1}, r_{i},
  u_{i-1})\right),\ \text{where $\rho:=p/(1-p)$.}\nonumber
\end{align}
\end{corollary}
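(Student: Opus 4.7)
The strategy is to compute the numerator $\IE[P^\omega(\tau_y<\tau_0,\tau_y<\infty);\,N=n,\overline{R}_n=(r_1,\dots,r_n)]$ of $Z_{0,y}\,Q_{0,y}(N=n,\overline{R}_n)$ by first freezing the environment on $(0,y)\cap\IZ$ and then averaging over $V(0,\omega)$. The event $\{N=n,\overline{R}_n=(r_1,\dots,r_n)\}$ fixes the potential at every interior site: obstacles must sit at $x_i:=\sum_{j\le i}r_j$ for $i=1,\dots,n-1$ and the other $y-n$ sites of $\{1,\dots,y-1\}$ are vacant, so by independence of the $V(\,\cdot\,,\omega)$'s this event has probability $p^{n-1}(1-p)^{y-n}$, and the only piece of $\omega$ that remains both random and relevant to $P^\omega(\tau_y<\tau_0,\tau_y<\infty)$ is $V(0,\omega)$, independent of the rest.

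Next I would invoke Lemma~\ref{F} with $x_n:=y$: iterating the strong Markov property along the ladder $0=x_0,x_1,\dots,x_n=y$ yields
\[P^\omega(\tau_y<\tau_0,\tau_y<\infty)=u_1u_2\cdots u_n,\]
where $u_k$ is the probability that the killed walk starting at $x_{k-1}$ reaches $x_k$ before returning to the origin. The lemma confines the whole dependence on $V(0,\omega)$ to the scalar prefactor of $u_1=e^{M-V(0,\omega)}F_M(0,r_1,0)$, while $u_2,\dots,u_n$ are deterministic functions of $(r_1,\dots,r_n)$ delivered by the recursion $u_k=F_M(r_{k-1},r_k,u_{k-1})$ with $u_0=0$.

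Integrating over $V(0,\omega)$ using $\IE[e^{M-V(0,\omega)}]=p+(1-p)e^M=p\bigl((1-p)e^M/p+1\bigr)$ and combining with the previous paragraph gives
\[\IE[P^\omega(\tau_y<\tau_0,\tau_y<\infty);\,N=n,\overline{R}_n]=p^{n-1}(1-p)^{y-n}\bigl(p+(1-p)e^M\bigr)\prod_{i=1}^n F_M(r_{i-1},r_i,u_{i-1}).\]
Absorbing the identity $p\cdot p^{n-1}(1-p)^{y-n}=\prod_{i=1}^n p(1-p)^{r_i-1}$ (which uses $\sum r_i=y$) into the product and dividing by $Z_{0,y}$ produces the first stated formula; the second is the same equation repackaged with $\rho=p/(1-p)$ and $(1-p)^y=(1-p)^{\sum r_i}$ pulled out. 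The substantive input is Lemma~\ref{F}; once the factorization and its $V(0,\omega)$-localization are granted, the corollary is just a one-variable integration plus algebra.
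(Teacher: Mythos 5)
Your proof is correct and is exactly the argument the paper implicitly intends (the corollary is stated without a separate proof, as an immediate consequence of Lemma~\ref{F}). The three ingredients you identify — the factorization $P^\omega(\tau_y<\tau_0)=u_1\cdots u_n$ by iterating the strong Markov property along the occupied sites, the observation that the event $\{N=n,\overline{R}_n\}$ freezes the potential in $(0,y)$ so that only $V(0,\omega)$ remains random, and the one-step integration $\IE[e^{M-V(0,\omega)}]=p+(1-p)e^M$ — are precisely what is needed, and the final algebra ($p\cdot p^{n-1}(1-p)^{y-n}=\prod_{i=1}^n p(1-p)^{r_i-1}$ using $\sum r_i=y$) is carried through correctly.
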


\noindent\textbf{Heuristic derivation of (\ref{gap}).} Before we
    turn to  rigorous analysis of (\ref{23}) we would like to present
    a ``back of the envelope derivation'' of the gap asymptotics
    (\ref{gap}). When $y\to\infty$ we might expect that measures
    $Q_{0,y}$ converge to a limiting measure, under which the
    consecutive gaps are essentially i.i.d.. It is reasonable to
    assume that if we let $M\to\infty$ or $p\to 0$ then the distances
    between consecutive occupied sites under this limiting measure
    will also go to infinity.  Thus, we replace $F_M(r_{i-1}, r_{i},
    u_{i-1})$ in (\ref{23}) with its limit as $r_{i-1}, r_{i}\to
    \infty$ given by (\ref{Fb}).  We get that for $y\to \infty$
  \begin{equation}\label{eu2}
    Q_{0,y}(N=n, \overline{R}_n=(r_1,r_2,\dots,r_n)) \ \asymp\ 
Z_{0,y}^{-1}\prod^n_{i=1} \left(p(1-p)^{r_i-1}\,\frac{e^{- M}}{2 r_i
    (1- e^{- M})}\right).
  \end{equation}
  By \cite[Lemma 5.5]{KM12}, $\lim_{y\to\infty}y^{-1}\ln
  Z_{0,y}=\beta$, where $\beta:=\beta_V(1)$ is the annealed Lyapunov
  exponent (see (\ref{ann})), and we replace $Z_{0,y}^{-1}$ in
  (\ref{eu2}) with
  $e^{\beta y}=e^{\beta\sum_{i=1}^n r_i}$ to arrive at
  \begin{multline*}
    Q_{0,y}(N=n, \overline{R}_n=(r_1,r_2,\dots,r_n)) \ \asymp\  
    \prod^n_{i=1} \left(\frac{p}{2 (1-p)(e^M-
      1)}\, \frac{(e^{\beta}(1-p))^{r_i}}{r_i}\right) \\= \prod^n_{i=1}
  \left(\frac{1}{K} \frac{(e^{\beta}(1-p))^{r_i}}{r_i}\right),
  \end{multline*}
where
  $K=K(p,M)$ is the same as in (\ref{gap}). For $Q_{0,y}$ to be a
  probability measure it should hold that
  \begin{equation}
    \label{eu3}
    \frac1{K}\sum_{r=1}^\infty\frac{(e^{\beta}(1-p))^r}{r}=1.
  \end{equation}
  In other words, the limiting gap size appears to have the
  so-called log-series distribution. Summing up the series in
  (\ref{eu3}) we see that
  $\ln(1-e^{\beta}(1-p))=-K$, i.e. $e^\beta(1-p)=1-e^{-K}$, 
and conclude that the expected gap size 
is \[\frac{1}{K}\sum_{r=1}^\infty(e^\beta(1-p))^r=\frac{1}{K}\,\sum_{r=1}^\infty(1-e^{-K})^r=\frac{e^K-1}{K}.\]
This immediately leads to (\ref{gap}). 

\medskip

Here is a layout of the rest of this subsection. We start a
rigorous analysis by noticing that all information about the
dependence of (\ref{23}) on $(r_1,r_2,\dots,r_n)$ is contained in the
last product. To study its behavior, we consider an auxiliary
quantity, the probability $U_n(q)$ that the killed random walk reaches
the $n$-th occupied site prior to the first return to $0$ if
$(R_i)_{i\in\IN}$, are i.i.d.\ positive integer-valued random
variables with probability mass function $G_q(r):=q(1-q)^{r-1}$,
$r\in\IN\cup\{0\}$, for some $q\in (0,1)$.  Without loss of generality
we shall assume that $0$ is occupied. Then (setting $r_0=0$, $u_0=0$)
\begin{equation}
  \label{Un}
  U_n(q)=\sum_{r_1,r_2,\dots,r_n\in\IN} \prod_{i=1}^n G_q(r_i)
F_M(r_{i-1}, r_i, u_{i-1}).
\end{equation}
We notice that $U_n(q)$ decays exponentially fast in $n$ for each $q$
(Lemma~\ref{lem5basic}). If we want the event that the killed random
walk reaches the $n$-th occupied site prior to the first return to $0$
to be a typical event, then we need to renormalize (\ref{Un}). In
Corollary~\ref{corbase} we show that there is $q=q(p)$ such that,
after the renormalization, the probability of the above event is
essentially equal to 1 (see (\ref{co1})). The renormalized measures
(\ref{pi}) can be effectively compared with product measures
(Lemma~\ref{lem54}). Such comparison allows us to use standard large
deviation bounds for product measures (Corollary~\ref{corexp}) and
obtain sufficient control on the right-hand side of (\ref{23}) to be
able to construct $\Omega^1_y$ and $\Omega^2_y$ in the next
subsection.

We use $\rho$ below simply as a shorthand for
$p/(1-p)$. Obviously $\rho\sim p$ as $p\to 0$.
\begin{lemma} \label{lem5basic} There is a continuous function
  $\mu:(0,1)\to \IR$ such that for every $q\in(0,1)$ 
  \begin{equation}
    \label{mu}
    \frac{q\log(1/q)}{2e^M(1-q)}\le \mu(q)\le  
\frac{q\log(1/q)}{2(e^M-1)(1-q)},
  \end{equation}
and $\mu^n(q)(1-e^{- M}) \le
  U_n(q) \leq \mu^n(q)$ for all $n\in\IN$.
\end{lemma}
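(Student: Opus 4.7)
\textit{Proof plan.} The plan is to define $\mu(q):=\lim_{n\to\infty}U_n(q)^{1/n}$ and derive the two-sided bound by combining pointwise monotonicity of $F_M$ with a supermultiplicativity argument. I would first read off from (\ref{defF}) that $F_M(\ell,r,u)$ is non-decreasing in each of $\ell$ and $u$, with the pointwise envelope $\frac{e^{-M}}{2r}\le F_M(\ell,r,u)\le \frac{e^{-M}}{2r(1-e^{-M})}$, and note the tight identity $u_1=\frac{e^{-M}}{2r_1}=(1-e^{-M})\cdot\frac{e^{-M}}{2r_1(1-e^{-M})}$ at the first step. Iterating $u_i=F_M(r_{i-1},r_i,u_{i-1})$ with these envelopes yields the deterministic bounds $\prod_{i=1}^n u_i\le(1-e^{-M})\prod_{i=1}^n \frac{e^{-M}}{2r_i(1-e^{-M})}$ and $\prod_{i=1}^n u_i\ge\prod_{i=1}^n \frac{e^{-M}}{2r_i}$. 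Taking $E_q$-expectation under $\bigotimes G_q$ and using $E_q[1/R]=q\log(1/q)/(1-q)$ produces the crude sandwich
\[
\left(\frac{q\log(1/q)}{2e^M(1-q)}\right)^n \le U_n(q) \le (1-e^{-M})\left(\frac{q\log(1/q)}{2(e^M-1)(1-q)}\right)^n,
\]
whose endpoints, denoted $\mu_*(q)$ and $\mu^*(q)$, are precisely those appearing in (\ref{mu}) and satisfy $\mu_*(q)/\mu^*(q)=1-e^{-M}$.

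Supermultiplicativity $U_{n+m}(q)\ge U_n(q)U_m(q)$ follows from the identity $u_1\cdots u_k=P^{0,\omega}(\tau_{x_k}<\tau_0)$, a consequence of the strong Markov property at the hitting times $\tau_{x_i}$. Factorizing this at $k=n+m$, the tail product $u_{n+1}\cdots u_{n+m}$ pointwise dominates a fresh chain initialized at $(\ell,u)=(0,0)$ on the independent gap block $R_{n+1},\ldots,R_{n+m}$ by the monotonicity of $F_M$ in $(\ell,u)$, and $E_q$ factorizes by independence of disjoint gap blocks. Fekete's lemma applied to $\log U_n(q)$ then produces $\mu(q):=\lim_n U_n(q)^{1/n}=\sup_n U_n(q)^{1/n}$, giving the upper bound $U_n(q)\le\mu(q)^n$ for free, and via the crude sandwich places $\mu(q)\in[\mu_*(q),\mu^*(q)]$ as claimed in (\ref{mu}). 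Continuity of $\mu(q)$ in $q$ is inherited from the continuity of each $U_n(q)$ together with the uniform sandwich on compact subsets of $(0,1)$.

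The matching lower bound $U_n(q)\ge(1-e^{-M})\mu(q)^n$ is the principal obstacle. I would attack it by analyzing the consecutive ratios $\rho_n(q):=U_{n+1}(q)/U_n(q)=\int g(r,u)\,d\nu_n(r,u)$, where $g(r,u):=E_{R'\sim G_q}[F_M(r,R',u)]$ is coordinatewise non-decreasing and $\nu_n$ is the law of $(R_n,u_n)$ tilted by the Boltzmann weight $u_1\cdots u_n$. The target is to show that $\nu_n$ is stochastically increasing in $n$ for the coordinatewise order on $(r,u)$, so that $\rho_n\nearrow\mu(q)$ and in particular $\rho_n\le\mu(q)$ for every $n$; then $U_n(q)/\mu(q)^n=(U_1(q)/\mu(q))\prod_{i=1}^{n-1}(\rho_i(q)/\mu(q))$ is a decreasing sequence whose first factor already satisfies $U_1(q)/\mu(q)\ge U_1(q)/\mu^*(q)=\mu_*(q)/\mu^*(q)=1-e^{-M}$, and a telescoping estimate bounding $\sum_n\log(\mu(q)/\rho_n(q))$ via the explicit formula for $F_M$ delivers the uniform lower bound $1-e^{-M}$. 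The technical heart is the stochastic-monotonicity step, which reduces to a log-supermodularity/FKG property of $\prod_i u_i$ viewed as a function of $(r_1,\ldots,r_n)$, inherited from the coordinatewise monotonicity of each $F_M(r_{i-1},r_i,u_{i-1})$; I expect this is where the appendix proof concentrates its technical effort.
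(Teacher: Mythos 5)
Your proposal handles the easy half correctly: supermultiplicativity of $U_n$ and Fekete's lemma give $\mu(q)=\sup_n U_n^{1/n}$ and hence $U_n\le\mu^n$; your ``crude sandwich'' (using the pointwise envelope $\frac{e^{-M}}{2r}\le F_M(\ell,r,u)\le\frac{e^{-M}}{2r(1-e^{-M})}$, with equality at $\ell=0$) does yield $\mu_*^n\le U_n\le(1-e^{-M})(\mu^*)^n$ and therefore pins down $\mu(q)\in[\mu_*,\mu^*]$, which is (\ref{mu}); and continuity follows as you indicate. This much is sound, and in fact gives an alternative derivation of (\ref{mu}) that bypasses the explicit $n=1$ computation.

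The genuine gap is the lower bound $U_n(q)\ge(1-e^{-M})\mu(q)^n$. You correctly identify it as the obstacle, but the route you sketch --- consecutive ratios $\rho_n=U_{n+1}/U_n$, a stochastic-monotonicity/FKG claim for the tilted law $\nu_n$, and a ``telescoping estimate'' --- does not close. Even granting that $\rho_n\le\mu$ for all $n$ (so that $U_n/\mu^n$ is non-increasing), you would still need the infinite product $\prod_n(\rho_n/\mu)$ to stay bounded below by $1-e^{-M}$, i.e.\ you would need a summable rate of convergence $\rho_n\uparrow\mu$; nothing in your sketch produces such a rate, and a priori $U_n/\mu^n$ could decay to $0$ polynomially while $U_n^{1/n}\to\mu$. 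The FKG step itself is also far from routine here, since the tilting weight $u_1\cdots u_n$ is a complicated iterated function of $(r_1,\dots,r_n)$.

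The paper avoids all of this with a short trick you are missing. Introduce $\tilde U_n$, the probability that a killed walk started at $0$ in an environment with \emph{all sites to the left of $0$ empty} reaches the $n$-th occupied site in $(0,\infty)$ (no constraint about returns to $0$). Conditioning on the number of returns to $0$ before reaching the $n$-th site, each extra visit to $0$ costs a survival factor $e^{-M}$, giving
\[
U_n\ \le\ \tilde U_n\ \le\ \sum_{k\ge0}e^{-Mk}\,U_n=\frac{U_n}{1-e^{-M}}.
\]
Crucially, $\log\tilde U_n$ is \emph{sub}additive (replacing the obstacles to the left of the $m$-th occupied site by an empty half-line only increases the hitting probability, and the remaining gaps are independent), so Fekete gives $\tilde U_n\ge\mu^n$ with the \emph{same} $\mu$ thanks to the two-sided comparison above. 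Combining, $U_n\ge(1-e^{-M})\tilde U_n\ge(1-e^{-M})\mu^n$. This two-line argument replaces your entire FKG/ratio program, and then the bounds on $\mu$ drop out from $U_1^{1/1}\le\mu\le U_1^{1/1}(1-e^{-M})^{-1}$ together with the explicit formula $U_1(q)=\frac{q\log(1/q)}{2e^M(1-q)}$.
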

\begin{proof} Let us fix an arbitrary $q\in(0,1)$ and drop it from the
  notation.  It is obvious that $U_n\ge U_mU_{n-m}$ for $1\le m\le
  n$. This implies that the sequence $\log U_n$, $n\in\IN$, is
  superadditive, and, thus,
  \begin{equation}
    \label{un}
    \lim_{n\to\infty} \dfrac{\log U_n}{n}=\sup_n\frac{\log
    U_n}{n}=:\log \mu.
  \end{equation}
Therefore, $U_n\le \mu^n$ for all
  $n\in\IN$. 

  For the lower bound, consider a killed random walk, which starts
  from the origin in an environment, such that all sites to the left
  from $0$ are empty. Let $\tilde{U_n}$ be the probability that this
  walk reaches the $n$-th occupied site in $(0,\infty)$. Conditioning
  on the number of returns to the origin before reaching the $n$-th
  occupied site, we obtain 
  \begin{equation}\label{us}
    U_n\le \tilde{U_n}\le
  \sum_{k=0}^\infty U_ne^{-Mk}=\frac{U_n}{1-e^{-M}}.
  \end{equation}
  Notice that the sequence $\log\tilde{U_n}$, $n\in\IN$, is
  subadditive. From this, (\ref{us}), and (\ref{un}) it follows that \[\log
  \mu= \lim_{n\to\infty} \dfrac{\log \tilde{U_n}}{n}=\inf_n\frac{\log
    \tilde{U_n}}{n}.\] We conclude that $\tilde{U_n}\ge \mu^n$. By
  (\ref{us}), $U_n \ge \mu^n(1-e^{- M}) $ for all $n\in\IN$.

  Properties of $\mu=\mu(q)$ follow from the inequality $U^{1/n}_n\le \mu\le
  U^{1/n}_n(1-e^{-M})^{-1/n}$, $n\in\IN$. Taking $n=1$ we
  compute directly that \[U_1(q)=\sum_{r=1}^\infty
  q(1-q)^{r-1}/(2e^Mr)=\frac{q\log(1/q)}{2e^M(1-q)} \] and obtain the
  desired bounds on $\mu$. Continuity of $\mu(q)$ follows from continuity
  of $U^{1/n}_n(q)$ as a function of $q$ for each $n$ and the fact that
  $(1-e^{-M})^{-1/n}\to 1$ as $n\to\infty$.
\end{proof}
\begin{corollary} \label{corbase} For each $\rho\in(0,\infty)$ there
  is a $q=q(\rho,M)\in(0,1)$ such that 
\begin{equation}
    \label{rho}
    2(1-e^{-M})\le
  e^{-M}\rho\log (1/q)\le 2,
  \end{equation} and for all $n\in\IN$
\begin{equation}
  \label{co1}
  (1-e^{-M})\le \sum_{r_1,r_2,\dots,r_n\in\IN} \prod_{i=1}^n
\left((1-q)^{r_i}\rho F_M(r_{i-1}, r_i, u_{i-1})\right)\le 1. 
\end{equation}
Moreover, for every $m\in\{1,2,\dots,n\}$ and any
$r_1,r_2,\dots,r_{m-1}\in\IN$
\begin{equation}
  \label{co2}
  (1-e^{-M})\le \sum_{r_m,r_{m+1},\dots,r_n\in\IN} \prod_{i=m}^n
\left((1-q)^{r_i}\rho F_M(r_{i-1}, r_i, u_{i-1})\right)\le (1-e^{-M})^{-1}.
\end{equation}
\end{corollary}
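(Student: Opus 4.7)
The strategy is to recognize the sum in (\ref{co1}) as a rescaled version of $U_n(q)$ from Lemma~\ref{lem5basic} and to choose $q$ so that the rescaling cancels the exponential factor $\mu^n(q)$. Writing $(1-q)^{r_i}\rho = (\rho(1-q)/q)\cdot q(1-q)^{r_i-1}$, the sum in (\ref{co1}) factors as $(\rho(1-q)/q)^n U_n(q)$. Combined with $\mu^n(q)(1-e^{-M}) \le U_n(q) \le \mu^n(q)$ from Lemma~\ref{lem5basic}, this lands in $[(1-e^{-M}),1]$ exactly when $\rho(1-q)\mu(q)/q=1$.

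To produce such a $q$, set $f(q):=\rho(1-q)\mu(q)/q$. The two-sided bound (\ref{mu}) gives
\[
\frac{\rho\log(1/q)}{2e^M} \le f(q) \le \frac{\rho\log(1/q)}{2(e^M-1)},
\]
so $f(q)\to\infty$ as $q\to 0^+$ and $f(q)\to 0$ as $q\to 1^-$; continuity of $\mu$ (also established in Lemma~\ref{lem5basic}) then makes $f$ continuous on $(0,1)$, and the intermediate value theorem produces $q\in(0,1)$ with $f(q)=1$. Substituting $f(q)=1$ into the same two-sided bound and multiplying by $\rho e^{-M}$ yields (\ref{rho}); dividing $\mu^n(q)(1-e^{-M}) \le U_n(q) \le \mu^n(q)$ by $\mu^n(q)$ then yields (\ref{co1}).

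For (\ref{co2}), introduce the shifted sum $\hat U_k(q;\ell,u) := \sum_{r_1,\ldots,r_k\in\IN}\prod_{i=1}^k q(1-q)^{r_i-1}F_M(r_{i-1},r_i,u_{i-1})$ with initial data $r_0=\ell$, $u_0=u$, so that the sum in (\ref{co2}) equals $\mu^{-k}\hat U_k(q;r_{m-1},u_{m-1})$ for $k=n-m+1$. Direct inspection of (\ref{defF}) shows that $F_M$ is nondecreasing in its first and third arguments, and since $u_i=F_M(r_{i-1},r_i,u_{i-1})$ propagates this monotonicity to every later $u_i$, the map $(\ell,u)\mapsto \hat U_k(q;\ell,u)$ is nondecreasing. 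In particular $\hat U_k(q;\ell,u) \ge \hat U_k(q;0,0) = U_k(q) \ge \mu^k(1-e^{-M})$, which is the lower bound of (\ref{co2}).

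The hard step is the upper bound. A naive termwise estimate $F_M(r_{i-1},r_i,u_{i-1}) \le F_M(r_{i-1},r_i,1)$ (the max over the third argument) factorizes the sum into a product and only yields $\hat U_k \le \tilde U_1^k \le \mu^k/(1-e^{-M})^k$, losing $k-1$ factors of $(1-e^{-M})^{-1}$. The plan is to compare $\hat U_k(q;\ell,u)$ directly to the auxiliary quantity $\tilde U_k$ from the proof of Lemma~\ref{lem5basic} via a pathwise coupling: using the same random walk steps and death coin-flips in both the environment with a left obstacle at $-\ell$ (survival parameter $u$ for the deeper left) and the corresponding left-obstacle-free environment, any success for the first walker is a success for the second, so $\hat U_k(q;\ell,u) \le \tilde U_k$. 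Combined with $\tilde U_k \le U_k(q)/(1-e^{-M}) \le \mu^k/(1-e^{-M})$ already established in the proof of Lemma~\ref{lem5basic}, this yields the upper bound.
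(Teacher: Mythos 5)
Your argument is correct. For (\ref{rho}) and (\ref{co1}) you take the same route as the paper: choose $q$ with $\rho(1-q)\mu(q)/q=1$ and rescale the two-sided bound on $U_n$ from Lemma~\ref{lem5basic}; making the intermediate-value argument for the existence of such $q$ explicit is a useful addition, as the paper simply declares ``setting $\rho=q/(\mu(1-q))$.'' For (\ref{co2}) the lower bound via monotonicity of $F_M$ in $\ell$ and $u$ is the same idea as the paper's inequality $F_M(0,r_m,0)\le F_M(r_{m-1},r_m,u_{m-1})$, but your upper bound takes a different and arguably cleaner route. The paper obtains (\ref{red}) from a shifted application of (\ref{co1}), then invokes $F_M(r_{m-1},r_m,u_{m-1})\le F_M(0,r_m,0)/(1-e^{-M})$ and asserts that ``replacing'' the $i=m$ factor costs one extra power of $(1-e^{-M})^{-1}$. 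Read literally this overlooks that the replacement also changes every subsequent $u_i$ through the recursion; what actually closes the paper's argument is the telescoping identity $\prod_{i=m}^n F_M(r_{i-1},r_i,u_{i-1})=P^{x_{m-1},\omega}(\tau_{x_n}<\tau_0)$, which reduces matters to comparing $P^{x_{m-1},\omega}(\tau_{x_n}<\tau_0)$ with $P^{x_{m-1},\omega}(\tau_{x_n}<\tau_{x_{m-1}})$. Your pathwise coupling to $\tilde U_k$ performs exactly this global comparison in one step, and the propagation issue dissolves automatically. The one thing worth recording is that the coupling as you phrase it presupposes $(\ell,u)$ is produced by an actual environment on $(-\infty,x_{m-1}]$, which is indeed the case for the pair $(r_{m-1},u_{m-1})$ that appears in (\ref{co2}), so there is no gap.
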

\begin{proof}
  Lemma~\ref{lem5basic} implies that
  \[(1-e^{- M})\le \sum_{r_1,r_2,\dots,r_n\in\IN} \prod_{i=1}^n
  (1-q)^{r_i}\dfrac{q}{(1-q)\mu} F_M(r_{i-1}, r_i, u_{i-1}) \le 1.\]
  Setting $\rho= q/(\mu(1-q))$ we get (\ref{co1}). Properties of $\mu$
  (see (\ref{mu})) imply (\ref{rho}).

  To show (\ref{co2}) we first notice that applying (\ref{co1}) to the
  walk which starts at $x_{m-1}$, never returns to $x_{m-1}$, and reaches $x_n$
  (in the notation of Lemma~\ref{F}) we get
  \begin{multline}
\label{red}
    (1-e^{-M})\le \sum_{r_m,r_{m+1},\dots,r_n\in\IN}
    \Big((1-q)^{r_m}\rho F_M(0, r_m, 0)%\times\\
    \prod_{i=m+1}^n (1-q)^{r_i}\rho F_M(r_{i-1}, r_i,
        u_{i-1})\Big)\le 1.
  \end{multline}
Moreover, by (\ref{defF}),
\[F_M(0, r_m, 0)\le F_M(r_{m-1}, r_m, u_{m-1})\le\frac{F_M(0, r_m,
  0)}{1-e^{-M}}.\] Thus, we can replace $F_M(0, r_m, 0)$ with
$F_M(r_{m-1}, r_m, u_{m-1})$ in (\ref{red}) at the expense of an extra
factor in the right-hand side and obtain (\ref{co2}).
\end{proof}
From Corollary~\ref{corbase} we see that for each $\rho\in(0,\infty)$
measures $\Pi^\rho_n$ on $\IN^n$, $n\in\IN$, defined
by
\begin{equation}
  \label{pi}
  \Pi^\rho_n(\{r_1,r_2,\dots,r_n\}):=\prod_{i=1}^n (1-q)^{r_i}\rho
F_M(r_{i-1}, r_i, u_{i-1}),\ (r_1,r_2,\dots,r_n)\in\IN^n,
\end{equation}
form an ``almost'' consistent family of ``almost'' probability
measures.

We can sharpen (\ref{rho}) as follows.
\begin{proposition} \label{propcalc} Let $q$ be defined as in
  Corollary~\ref{corbase} and $\rho=p/(1-p)$. Then \[\lim_{\rho\to
    0}\rho\log(1/q)=2(e^M-1)\ \text{ and }\
  \lim_{M\to\infty}e^{-M}\log(1/q)=2/\rho.\]
\end{proposition}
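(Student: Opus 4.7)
The second limit is immediate from Corollary~\ref{corbase}: dividing (\ref{rho}) by $\rho e^M$ yields $2(1-e^{-M})/\rho\le e^{-M}\log(1/q)\le 2/\rho$, and for $\rho$ fixed both sides tend to $2/\rho$ as $M\to\infty$.

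The first limit is less direct: the lower bound in (\ref{rho}) only gives $\liminf_{\rho\to 0}\rho\log(1/q)\ge 2(e^M-1)$, since the crude estimate $\mu(q)\ge U_1(q)=q\log(1/q)/(2e^M(1-q))$ from Lemma~\ref{lem5basic} is off by a factor of $e^M/(e^M-1)$. The plan is to sharpen this to $\mu(q)\sim q\log(1/q)/(2(e^M-1)(1-q))$ as $q\to 0$ by exploiting $\mu=\lim_n U_n(q)^{1/n}$ for large $n$ rather than $n=1$. Given $\epsilon>0$, I would use the uniform-in-$u$ asymptotic (\ref{Fb}) to pick $R$ with $F_M(\ell,r,u)\ge(1-\epsilon)\,e^{-M}/(2r(1-e^{-M}))$ for every $\ell,r\ge R$ and every $u\in[0,1]$, and then restrict the sum (\ref{Un}) to configurations with $r_i\ge R$ for all $i$. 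For $i\ge 2$ the lower bound on $F_M(r_{i-1},r_i,u_{i-1})$ is independent of $r_{i-1}$ and $u_{i-1}$, so the sum decouples into a product and
$$U_n(q)\ \ge\ B(q)\,\bigl((1-\epsilon)A(q)\bigr)^{n-1},$$
with $A(q)=\sum_{r\ge R}q(1-q)^{r-1}\,e^{-M}/(2r(1-e^{-M}))$ and $B(q)=\sum_{r\ge R}q(1-q)^{r-1}\,e^{-M}/(2r)$ absorbing the $i=1$ term, for which $r_0=0$ forces $F_M(0,r_1,0)=e^{-M}/(2r_1)$.

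Taking $n$-th roots and letting $n\to\infty$ gives $\mu(q)\ge(1-\epsilon)A(q)$. The standard identity $\sum_{r\ge 1}q(1-q)^{r-1}/r=q\log(1/q)/(1-q)$, together with the elementary $O(q\log R)$ bound on the truncated tail $r<R$, shows $A(q)\sim q\log(1/q)/(2(e^M-1)(1-q))$ as $q\to 0$. Combined with the upper bound from Lemma~\ref{lem5basic} this yields $\mu(q)\sim q\log(1/q)/(2(e^M-1)(1-q))$, and substituting into $\mu(q)=q/(\rho(1-q))$ produces $\rho\log(1/q)\to 2(e^M-1)$. Finally, Corollary~\ref{corbase} forces $q\to 0$ whenever $\rho\to 0$, so the first limit follows. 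The main conceptual step is recognizing that the $n=1$ bound must be replaced by a factorized lower bound on $U_n$ for large $n$; the $u$-uniformity in (\ref{Fb}) is exactly what enables the decoupling, after which the argument is routine bookkeeping.
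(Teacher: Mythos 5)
Your argument is correct, and it takes a genuinely different route from the paper's. Both of you immediately dispense with the $M\to\infty$ limit by dividing \eqref{rho} through by $\rho e^M$, and both recognize that the remaining work is the upper bound $\limsup_{\rho\to 0}\rho\log(1/q)\le 2(e^M-1)$, since \eqref{rho} already supplies the matching lower bound. From there the strategies diverge. The paper argues by contradiction: assuming $\rho\log(1/q)>2(1+\epsilon)(e^M-1)$ along a subsequence, it shows that the total mass $\Pi_n^\rho(\IN^n)$ would exceed $1$ for large $n$, which is impossible by \eqref{co1}. The technical engine there is a split of indices into $A_R$ and $\bar A_R$ according to whether $\min\{r_{i-1},r_i\}\le R$, followed by a Cram\'er-type large-deviations estimate for i.i.d.\ Bernoulli indicators $\I_{\{Y_i\le R\}}$ to control configurations with many small gaps. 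You instead work directly with $\mu(q)=\lim_n U_n(q)^{1/n}$: you restrict the sum \eqref{Un} to $r_i\ge R$, and crucially exploit the $u$-uniformity in \eqref{Fb} so that the restricted $F_M$-factors decouple, giving $U_n(q)\ge B(q)((1-\epsilon)A(q))^{n-1}$, hence $\mu(q)\ge(1-\epsilon)A(q)$. Since the truncated log-series tail $r<R$ contributes only $O(q)$ against the $q\log(1/q)$ of the full sum, you obtain $\mu(q)\sim q\log(1/q)/(2(e^M-1)(1-q))$ as $q\to 0$, which together with the relation $\rho=q/(\mu(q)(1-q))$ from Corollary~\ref{corbase} yields the claim; the observation that \eqref{rho} forces $q\to 0$ whenever $\rho\to 0$ closes the loop. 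What your approach buys is a direct, constructive asymptotic for $\mu(q)$ and no appeal to large-deviations machinery; what the paper's approach buys is that it never needs the $u$-uniform decoupling, only pointwise lower bounds on $F_M$ plus the mass constraint. Both hinge on the same core observation that small gaps are rare when $\rho$ is small, but realize it through different technical mechanisms, and your sharpened statement $\mu(q)\sim q\log(1/q)/(2(e^M-1)(1-q))$ is a slightly stronger intermediate result than what the paper's contradiction argument extracts.
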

\begin{proof}
  In view of (\ref{rho}) we only need to show that for every
  $\epsilon>0$ there is $\rho_0>0$ such that $\rho\log(1/q)\le
  2(1+\epsilon)(e^M-1)$ for all $\rho\in(0,\rho_0)$. Assume the
  contrary, i.e.\ that there is $\epsilon\in(0,1/2)$ such that for
  every $\rho_0>0$ there is $\rho\in(0,\rho_0)$, for which
  $\rho\log(1/q)> 2(1+\epsilon)(e^M-1)$. Then \[\sum_{r=1}^\infty
  \frac{\rho(1-q)^r}{2(e^M-1)r}>1+\epsilon.\] We shall show that this
  contradicts to the fact that the total mass of $\Pi^\rho_n$ is
  bounded above by $1$ uniformly in $n\in\IN$ and
  $\rho\in(0,\infty)$. Recall that for all $\ell\ge 0,\ r\ge 1$ and
  $u\in[0,1]$ \[F_M(\ell,r,u)\ge
  F_M(0,r,0)=\frac{1}{2re^M}=\frac{1-e^{-M}}{2r(e^M-1)}\] and that, by
  (\ref{Fb}), for all $\ell,r> R$, where $R$ is sufficiently large,
  \[F_M(\ell,r,u)\ge \frac{(1-\epsilon/4)}{2r(e^M-1)}.\] Given
  $r_1,r_2,\dots,r_n$, we split the set of indices into
  $A_R:=\{i\in\{1,2,\dots,n\}:\,\min\{r_{i-1},r_i\}\le R\}$ and
  $\bar{A}_R:=\{1,2,\dots,n\}\setminus A_R$. Then
  \begin{multline*}
    \Pi^\rho_n(\{r_1,\dots,r_n\})=\prod_{i=1}^n \rho(1-q)^{r_i}
    F_M(r_{i-1},r_i,u_{i-1})=\\\prod_{i\in A_R}\rho(1-q)^{r_i}
    F_M(r_{i-1},r_i,u_{i-1}) \times
    \prod_{i\in\bar{A}_R}\rho(1-q)^{r_i} F_M(r_{i-1},r_i,u_{i-1})\ge
    \\ (1-e^{-M})^{|A_R|}(1-\epsilon/4)^{n-|A_R|}\prod_{i=1}^n
    \frac{\rho(1-q)^{r_i}}{2(e^M-1)r_i}.
  \end{multline*}
  Choose $\delta>0$ small enough to have $(1-e^{-M})^\delta\ge
  1-\epsilon/4$. Then
  \begin{align*}
    1&\ge \Pi^\rho_n(|A_R|\le \delta n)=\sum_{|A_R|\le \delta n}
    \prod_{i=1}^n \rho(1-q)^{r_i} F_M(r_{i-1},r_i,u_{i-1})\\&\ge
    (1-\epsilon/4)^{2n}\sum_{|A_R|\le \delta n} \prod_{i=1}^n
    \frac{\rho(1-q)^{r_i}}{2(e^M-1)r_i}
    \\&=(1-\epsilon/4)^{2n}\left(\sum_{r_1,\dots,r_n\in\IN}
      \prod_{i=1}^n \frac{\rho(1-q)^{r_i}}{2(e^M-1)r_i}-\sum_{|A_R|>
        \delta n} \prod_{i=1}^n \frac{\rho(1-q)^{r_i}}{2(e^M-1)r_i}\right)\\
    &\ge
    (1-\epsilon/4)^{2n}(1+\epsilon)^n-(1-\epsilon/4)^{2n}\sum_{|A_R|>
      \delta n} \prod_{i=1}^n \frac{\rho(1-q)^{r_i}}{2(e^M-1)r_i}
    \\&\ge (1+\epsilon/4)^n-(1-\epsilon/4)^{2n}\sum_{|A_R|>
      \delta n} \prod_{i=1}^n \frac{\rho(1-q)^{r_i}}{2(e^M-1)r_i}.
  \end{align*}
  To get a contradiction, it is enough to show that the last sum is
  bounded uniformly in $n$. Such a bound is easily obtained from basic
  large deviations for i.i.d.\ Bernoulli random
  variables. Notice that by (\ref{rho}) there is a constant $C$,
  $1-e^{-M}\le C\le 1$, such that $C\rho(1-q)^r/(2r(e^M-1))$,
  $r\in\IN$, is a probability distribution on $\IN$. Consider a
  sequence of i.i.d.\ random variables $(Y_i)_{i\in\IN}$ with this
  distribution. Then $P(Y_i\le R)\le C\rho R/(2(e^M-1))\to 0$ as
  $\rho\to 0$. Thus, for an arbitrary $c>0$ we can choose $\rho$ small
  enough so that for all sufficiently large
  $n$ \[P\left(\sum_{i=1}^n\I_{\{Y_i\le R\}}>\frac{\delta n}{2}\right) \le
  e^{-cn}.\] Since $\{|A_R|>\delta
  n\}\subset\{\sum_{i=1}^n\I_{\{r_i\le R\}}>\delta n/2\}$,
 \[\sum_{|A_R|>
   \delta n} \prod_{i=1}^n \frac{\rho(1-q)^{r_i}}{2(e^M-1)r_i}\le
C^{-n}P\left(\sum_{i=1}^n\I_{\{Y_i\le R\}}>\frac{\delta n}{2}
     \right)\le (Ce^c)^{-n}\le
 1 \] for $c>-\log(1-e^{-M})$ and all large $n$, and we are done.
\end{proof}

We shall need the following comparison lemma. The notation
$\le_{\mathrm{st}}$ (resp.\ $\ge_{\mathrm{st}}$) means
``stochastically smaller'' (resp.\ ``stochastically larger''), where
we use the usual stochastic order (see, for example,
\cite[Sec.\,6.B]{SS07}).
\begin{lemma} \label{lem54} Let $(R_1,R_2,\dots,R_n)$ be distributed
  according to the probability measure
  $\tilde{\Pi}_n^\rho(\cdot):=\Pi_n^\rho(\cdot)/\Pi_n^\rho(\IN^n)$.
  \begin{itemize}
  \item [(a)] For $\Gamma=\Gamma(M)=(1-e^{-M})^{-2}$ 
    \[(R_1,R_2,\dots,R_n)\le_{\mathrm{st}} (Y_1,Y_2,\dots,Y_n),\] where
    $(Y_i)_{1\le i\le n}$ are i.i.d., $Y_i\in\IN$, and for all
    $x\in\IN$
    \begin{equation}
      \label{y}
      P(Y_1\ge x)=1\wedge\left(
      \Gamma\sum_{r=x}^\infty\frac{\rho\,(1-q)^r}{r}\right).
    \end{equation}
  \item [(b)] For $\gamma=1/\Gamma(M)=(1-e^{-M})^2$ 
    \[(R_1,R_2,\dots,R_n) \ge_{\mathrm{st}} (Z_1,Z_2,\dots,Z_n),\] where
    $(Z_i)_{1\le i\le n}$ are i.i.d., $Z_i\in\IN\cup\{0\}$, and for
    all $x\in\IN$
    \begin{equation}
      \label{z}
      P(Z_1\ge
    x)=\gamma\sum_{r=x}^\infty\frac{\rho\,(1-q)^r}{r}.
    \end{equation}
  \end{itemize}
\end{lemma}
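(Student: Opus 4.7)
The plan is to reduce the multivariate stochastic comparison to a uniform one-dimensional conditional one and then invoke the standard sequential coupling theorem: if, for every $i$ and every history $r_1,\dots,r_{i-1}$, the conditional law of $R_i$ under $\tilde\Pi_n^\rho$ is stochastically dominated by (resp.\ dominates) a fixed law $\mu$ on $\IN$ (resp.\ $\IN\cup\{0\}$), then $(R_1,\dots,R_n)\le_{\mathrm{st}}(Y_1,\dots,Y_n)$ (resp.\ $\ge_{\mathrm{st}}(Z_1,\dots,Z_n)$) with the $Y_i$'s (resp.\ $Z_i$'s) i.i.d.\ of law $\mu$; the coupling itself is realized stepwise via conditional quantile transforms.

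The first input is a uniform two-sided estimate on $F_M$, immediate from the closed form (\ref{defF}): for every $\ell\ge 0$, $r\ge 1$, and $u\in[0,1]$,
\[
\frac{e^{-M}}{2r}\ \le\ F_M(\ell,r,u)\ \le\ \frac{1}{2r(e^M-1)}.
\]
For $\ell\ge 1$ this is because the denominator in (\ref{defF}) equals $1-e^{-M}+e^{-M}\bigl[1/(2r)+(1-u)/(2\ell)\bigr]$ with the bracket in $[0,1]$; the case $\ell=0$ is direct. The second input is the ``almost Markov'' factorization of $\tilde\Pi_n^\rho$: introducing the tail normalizer
\[
W_j(\ell,u):=\sum_{r_j,\dots,r_n\in\IN}\prod_{k=j}^{n}(1-q)^{r_k}\rho\, F_M(r_{k-1},r_k,u_{k-1}),\qquad r_{j-1}=\ell,\ u_{j-1}=u,
\]
one has the ratio expression
\[
\tilde\Pi_n^\rho(R_i=r\mid R_1,\dots,R_{i-1})=\frac{(1-q)^r\rho\, F_M(r_{i-1},r,u_{i-1})\, W_{i+1}(r,u_i(r))}{W_i(r_{i-1},u_{i-1})},
\]
and Corollary~\ref{corbase} (applied to the walk started at the $(j-1)$-st occupied site) yields $W_j(\ell,u)\in[1-e^{-M},(1-e^{-M})^{-1}]$ uniformly in $(\ell,u)$.

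Combining the two inputs and summing over $r\ge x$, the conditional tail $\tilde\Pi_n^\rho(R_i\ge x\mid R_1,\dots,R_{i-1})$ is sandwiched, uniformly in the history, between a multiple of $\gamma\sum_{r\ge x}\rho(1-q)^r/r$ and (capped at $1$) a multiple of $\Gamma\sum_{r\ge x}\rho(1-q)^r/r$. Absorbing the $M$-dependent prefactors produced by the $F_M$ bounds into $\gamma$ and $\Gamma$ produces exactly the sandwich $P(Z_1\ge x)\le\tilde\Pi_n^\rho(R_i\ge x\mid\cdot)\le P(Y_1\ge x)$ with $Y_1$ and $Z_1$ defined by (\ref{y})--(\ref{z}), and the sequential coupling theorem then supplies the joint stochastic dominations claimed in (a) and (b). The main technical point is that every estimate must be uniform in the hidden variable $u_{i-1}$ and in the unbounded history $r_{i-1}$, but this is precisely what the explicit formula (\ref{defF}) and Corollary~\ref{corbase} provide, so no genuine obstacle arises.
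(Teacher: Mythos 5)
Your overall plan is exactly the paper's: reduce the joint stochastic domination to a uniform one-dimensional conditional domination (the ``sequential coupling'' fact you invoke is precisely \cite[Th.\ 6.B.3]{SS07} cited in the paper), and then bound the conditional tail of $R_m$ given the past by combining the uniform two-sided estimate $\frac{e^{-M}}{2r}\le F_M(\ell,r,u)\le\frac{1}{2r(e^M-1)}$ with the uniform two-sided bound from Corollary~\ref{corbase} on the tail normalizers $W_j$. The paper proves only part (a) this way, remarking that part (b) is similar, and your write-up simply makes the almost-Markov factorization and the role of (\ref{co2}) more explicit; the substance is the same.

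One point deserves flagging, though. Your last step asserts that ``absorbing the $M$-dependent prefactors produced by the $F_M$ bounds into $\gamma$ and $\Gamma$ produces exactly the sandwich.'' For part (a) this is fine for $M$ not too small, since the resulting prefactor is $(1-e^{-M})^{-2}\cdot\frac{1}{2(e^M-1)}$ and the factor $\frac{1}{2(e^M-1)}\le 1$ can be discarded while keeping an upper bound, arriving at $\Gamma(M)=(1-e^{-M})^{-2}$ as stated. But for part (b) the computation yields the \emph{lower} bound
\[
\tilde\Pi_n^\rho(R_m\ge x\mid\cdot)\ \ge\ (1-e^{-M})^{2}\cdot\frac{e^{-M}}{2}\sum_{r\ge x}\frac{\rho(1-q)^r}{r},
\]
and the factor $\frac{e^{-M}}{2}<1$ goes the \emph{wrong} way: you cannot discard it and still conclude $\ge P(Z_1\ge x)$ with $\gamma=(1-e^{-M})^2$. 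The natural constant your argument (and the paper's implicit one) produces is $\gamma'=\frac{e^{-M}}{2}(1-e^{-M})^2$, not $(1-e^{-M})^2$. (Indeed, with $\gamma=(1-e^{-M})^2$ and (\ref{rho}) one can check $P(Z_1\ge 1)>1$ for moderate $M$, so the stated $\gamma$ cannot be right as a literal constant; with $\gamma'$ this inconsistency disappears.) This is a defect shared with the paper's statement rather than a flaw unique to your reasoning, and it is harmless downstream --- Corollary~\ref{corexp} and the construction of $\Omega_y^2$ only use that $\gamma(M)$ is some fixed positive constant --- but your phrase ``exactly the sandwich'' overstates what the bounds give.
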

\begin{proof}
  Since proofs of both parts are very similar, we prove only part
  (a). By \cite[Th.\ 6.B.B]{SS07}, it is enough to check that there is
  $\Gamma$ such that for all $m\ge 1$ and
  $r_1,\dots,r_{m-1}\in\IN$ \[[R_m\,|\,R_1=r_1,\dots
  R_{m-1}=r_{m-1}]\le_{\mathrm{st}} Y_1, \] where for $m=1$ we agree
  to drop the conditioning. By the definition of $\tilde{\Pi}_n^\rho$,
  (\ref{co2}), and (\ref{defF}) we have for each $x\in\IN$
  \begin{align*}
    &\tilde{\Pi}_n^\rho(R_m\ge x\,|\,R_1=r_1,\dots
    R_{m-1}=r_{m-1})\\&=\frac{\sum\limits_{r_m\ge
        x}\sum\limits_{r_{m+1},\dots,r_n\in\IN} \prod\limits_{i=m}^n
      \rho(1-q)^{r_i}F_M(r_{i-1},r_i,u_{i-1})}{\sum\limits_{r_m,\dots,r_n\in\IN}
      \prod\limits_{i=m}^n
      \rho(1-q)^{r_i}F_M(r_{i-1},r_i,u_{i-1})}\\&\le 1\wedge \left(
      (1-e^{-M})^{-2}\sum\limits_{r_m\ge
        x}\rho(1-q)^{r_m}F_M(r_{i-1},r_i,u_{i-1})\right)\\&\le
    1\wedge \left( (1-e^{-M})^{-2}\sum\limits_{r_m\ge
        x}\frac{\rho(1-q)^{r_m}}{2r_m(e^M-1)}\right)\le 1\wedge
      \left(\Gamma(M)\sum\limits_{r\ge x}\frac{\rho(1-q)^{r}}{r}\right).\qedhere
  \end{align*}
\end{proof}
\begin{remark}
  {\em Observe that due to Proposition~\ref{propcalc} $EZ_1\to \infty$ when
  $\rho\to 0$ or $M\to\infty$.}
\end{remark}
The next corollary follows from the last
lemma by Cram\'er's theorem.
\begin{corollary} \label{corexp} For $ \theta_0 \in(0,1]$, for each
  $H> EY_1$ (see (\ref{y})), and each $h < EZ_1$ (see (\ref{z})) there
  exist strictly positive $c(H)$ and $c'(h)$ such that for all $n$
  large and $\theta\in[\theta_0,1]$
\[\tilde{\Pi}_n^\rho\left(\sum_{1\le i\le \theta
    n}R_i\ge H\theta n\right)\le e^{-c(H) \theta n}\text{ and
}\ \ \tilde{\Pi}_n^\rho\left(\sum_{1\le i\le \theta n}R_i\le h\theta
  n\right)\le e^{-c'(h) \theta n}.\]
\end{corollary}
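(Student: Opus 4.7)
The plan is to reduce both estimates to the classical Chernoff inequality for i.i.d.\ sums, exploiting the stochastic domination provided by Lemma~\ref{lem54}. First I note that $Y_1$ and $Z_1$ have finite exponential moments in a neighbourhood of the origin: by (\ref{y}) and (\ref{z}) their tails are majorised by $\Gamma\sum_{r\ge x}\rho(1-q)^r/r$, which decays geometrically at rate $(1-q)^x$. Consequently both means $EY_1$ and $EZ_1$ are finite, and the rate functions
\[I_Y(H):=\sup_{\lambda\ge 0}\bigl(\lambda H-\log Ee^{\lambda Y_1}\bigr),\qquad I_Z(h):=\sup_{\lambda\ge 0}\bigl(-\lambda h-\log Ee^{-\lambda Z_1}\bigr)\]
are strictly positive whenever $H>EY_1$ and $h<EZ_1$, since $Y_1$ and $Z_1$ are nondegenerate.

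Next, set $m:=\lfloor\theta n\rfloor$. Since $\{(r_1,\dots,r_n)\in\IN^n:\sum_{i\le m}r_i\ge s\}$ is an upper set in the coordinatewise order (and $\{\sum_{i\le m}r_i\le s\}$ is a lower set), Lemma~\ref{lem54}(a) together with the Chernoff bound gives
\[\tilde{\Pi}_n^\rho\Bigl(\sum_{i=1}^m R_i\ge H\theta n\Bigr)\le P\Bigl(\sum_{i=1}^m Y_i\ge H\theta n\Bigr)\le P\Bigl(\sum_{i=1}^m Y_i\ge Hm\Bigr)\le e^{-mI_Y(H)},\]
where the middle inequality uses $H\theta n\ge Hm$. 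The parallel computation with Lemma~\ref{lem54}(b) and the $Z_i$'s yields $\tilde{\Pi}_n^\rho(\sum_{i=1}^m R_i\le h\theta n)\le e^{-mI_Z(h)}$.

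Finally, to convert $e^{-mI_Y(H)}$ into the advertised bound $e^{-c(H)\theta n}$ uniformly in $\theta\in[\theta_0,1]$, pick any $c(H)\in(0,I_Y(H))$, use $m\ge\theta n-1$, and estimate
\[e^{-mI_Y(H)}\le e^{I_Y(H)}\,e^{-(I_Y(H)-c(H))\theta n}\,e^{-c(H)\theta n}\le e^{-c(H)\theta n},\]
the last inequality holding as soon as $n\ge I_Y(H)/\bigl((I_Y(H)-c(H))\theta_0\bigr)$, which makes the product of the first two factors at most $1$ (using $\theta\ge\theta_0$). The identical argument with any $c'(h)\in(0,I_Z(h))$ handles the lower tail.

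The substance of the proof is really the stochastic domination established in Lemma~\ref{lem54}; once that is in hand the argument is routine Chernoff, and the only mildly delicate point — maintaining the exponent at rate $c(H)\theta n$ uniformly down to $\theta=\theta_0$ — costs no more than the short bookkeeping in the last display.
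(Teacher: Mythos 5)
Your proposal is correct and takes the same route the paper implicitly intends: Lemma~\ref{lem54} gives stochastic domination of $(R_i)$ by the i.i.d.\ $(Y_i)$ from above and $(Z_i)$ from below, the events in question are monotone (upper/lower sets in the coordinatewise order) so the domination transfers to their probabilities, and then Chernoff/Cram\'er finishes. The paper simply asserts that the corollary ``follows from the last lemma by Cram\'er's theorem'' without spelling anything out, and your write-up supplies exactly the missing bookkeeping, including the uniform-in-$\theta$ conversion from $e^{-mI}$ to $e^{-c\theta n}$.

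One small point to tighten: for the lower tail, the step you call a ``parallel computation'' is not literally parallel. In the upper tail you used $H\theta n\ge Hm$ to shrink the event $\{\sum_{i\le m}Y_i\ge H\theta n\}\subset\{\sum_{i\le m}Y_i\ge Hm\}$. For the lower tail the analogous inequality $h\theta n\ge hm$ points the \emph{wrong} way: $\{\sum_{i\le m}Z_i\le h\theta n\}\supset\{\sum_{i\le m}Z_i\le hm\}$. You instead have $h\theta n< h(m+1)$, so the event is contained in $\{\sum_{i\le m}Z_i\le h'm\}$ only once $m$ is large enough that $h(m+1)\le h'm$ for some fixed $h'\in(h,EZ_1)$; this gives $e^{-mI_Z(h')}$ rather than $e^{-mI_Z(h)}$, after which your final display applies verbatim with $I_Z(h')$ in place of $I_Y(H)$. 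The conclusion is unchanged, but the middle inequality as stated (``$\le e^{-mI_Z(h)}$'') skips this substitution.
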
 

Finally, we are ready to convert information about configurations
under $\tilde{\Pi}_n^\rho$ to information under
$Q_{0,y}$. For $1\le n\le k\le y$ let
\begin{align}
  A_{n,k}:&=\left\{(r_1,r_2,\dots,r_k)\in\IN^k\,:\,\sum_{i=1}^{n-1} r_i<y,\
\sum_{i=1}^n r_i\ge y\right\};\nonumber\\
  A_n:&=\left\{(r_1,r_2,\dots,r_n)\in\IN^n\,:\,\sum_{i=1}^n
    r_i=y\right\}\subset A_{n,n}.\label{an}
\end{align}
\begin{lemma} \label{corbond} For each $H>EY_1$  and $h< EZ_1$
  there exist  strictly positive $c_1(H)$ and $c_1'(h)$ such
  that for all $y$ large
  \begin{align*}
    (1-q)^y\sum_{A_n}\prod_{i=1}^n \rho
F_M(r_{i-1}, r_{i}, u_{i-1}) &\leq e^{-c_1(H)
  y}\quad\text{for all $n\in [1,y/H]$};\\
(1-q)^y\sum_{A_n}\prod_{i=1}^n \rho
F_M(r_{i-1}, r_{i}, u_{i-1}) &\leq e^{-c_1'(h) y}\quad\text{for all
  $n\in[y/h,y]$}.
  \end{align*}
 
\end{lemma}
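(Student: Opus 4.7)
The key reduction is that because $(r_1,\dots,r_n)\in A_n$ satisfies $\sum_{i=1}^n r_i=y$, one has $\prod_{i=1}^n(1-q)^{r_i}=(1-q)^y$, and hence, using $\Pi^\rho_n(\IN^n)\le 1$ from Corollary~\ref{corbase},
\[
(1-q)^y\sum_{A_n}\prod_{i=1}^n\rho\,F_M(r_{i-1},r_i,u_{i-1})=\Pi^\rho_n(A_n)\le\tilde\Pi^\rho_n(A_n).
\]
It therefore suffices to bound $\tilde\Pi^\rho_n(A_n)$ exponentially in $y$ in each of the two ranges of $n$. My plan is to combine the stochastic dominances of Lemma~\ref{lem54} with direct Chernoff bounds; Corollary~\ref{corexp}, whose rate is in $n$ rather than $y$, is too weak when $n$ is much smaller than $y$.

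For $n\in[1,y/H]$, observe that $A_n\subseteq\{\sum_i R_i\ge y\}$. Applying Chernoff's inequality with $\lambda>0$ and then Lemma~\ref{lem54}(a) (to the increasing function $(r_1,\dots,r_n)\mapsto e^{\lambda\sum r_i}$) yields
\[
\tilde\Pi^\rho_n(A_n)\le e^{-\lambda y}\bigl(Ee^{\lambda Y_1}\bigr)^n.
\]
From (\ref{y}) the tail of $Y_1$ is dominated by a geometric one with parameter $1-q$, so $Ee^{\lambda Y_1}$ is finite on a right neighborhood of $0$ and $\lambda^{-1}\log Ee^{\lambda Y_1}\to EY_1<H$ as $\lambda\downarrow 0$. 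Fix $\lambda_0>0$ with $H\lambda_0>\log Ee^{\lambda_0 Y_1}$; then for $n\le y/H$,
\[
\tilde\Pi^\rho_n(A_n)\le\exp\!\Bigl(-y\bigl(\lambda_0-H^{-1}\log Ee^{\lambda_0 Y_1}\bigr)\Bigr)=e^{-c_1(H)y}
\]
with $c_1(H):=\lambda_0-H^{-1}\log Ee^{\lambda_0 Y_1}>0$. The range $n\in[y/h,y]$ is treated symmetrically using $A_n\subseteq\{\sum R_i\le y\}$, Chernoff with parameter $-\lambda<0$, and Lemma~\ref{lem54}(b) applied to the decreasing function $e^{-\lambda\sum r_i}$, giving $\tilde\Pi^\rho_n(A_n)\le e^{\lambda y}(Ee^{-\lambda Z_1})^n$; since $\log Ee^{-\lambda Z_1}<0$ and $n\ge y/h$, this is at most $\exp\!\bigl(y(\lambda+h^{-1}\log Ee^{-\lambda Z_1})\bigr)$, and $h<EZ_1$ (together with $\lambda^{-1}\log Ee^{-\lambda Z_1}\to-EZ_1$ as $\lambda\downarrow 0$) makes the bracket strictly negative for all sufficiently small $\lambda>0$.

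The main technical point is that the target rate is in $y$ while $n$ may be as small as $1$, so a naive estimate of the form $e^{-cn}$ is insufficient. The Chernoff splitting handles this automatically: the exponent $-\lambda y+n\log Ee^{\lambda Y_1}$ is already $y$-linear in its first term, and the constraint $n\le y/H$ (respectively $n\ge y/h$) converts the $n$-linear contribution into an additional $y$-linear one that is strictly dominated by the first, thanks to $H>EY_1$ (respectively $h<EZ_1$) together with the finiteness of the MGFs of $Y_1$ and $Z_1$ near $0$ coming from their geometric-like tails.
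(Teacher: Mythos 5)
Your proof is correct, and it takes a genuinely different (and arguably more direct) route from the paper's. You first observe that on $A_n$ the constraint $\sum_i r_i = y$ lets you rewrite the left-hand side exactly as $\Pi^\rho_n(A_n)$, hence bound it by $\tilde\Pi^\rho_n(A_n)$ using $\Pi^\rho_n(\IN^n)\le 1$, and then attack $\tilde\Pi^\rho_n(A_n)$ via Chernoff bounds plus the stochastic dominance of Lemma~\ref{lem54}. The paper instead applies Corollary~\ref{corexp} at level $n=y$ (so that the rate is already in $y$), embeds $A_n\subset\IN^n$ into the larger event $A_{n,y}\subset\IN^y$, and then ``integrates out'' the extra coordinates $r_{n+1},\dots,r_y$ via (\ref{co2}) before pulling out the factor $(1-q)^y$. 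Your approach avoids the need for (\ref{co2}) and the passage through $A_{n,y}$, since the Chernoff exponent $-\lambda y + n\log Ee^{\lambda Y_1}$ is already linear in $y$ by itself; you correctly note that $Ee^{\lambda Y_1}\ge 1$ (resp.\ $Ee^{-\lambda Z_1}\le 1$) so that $n\le y/H$ (resp.\ $n\ge y/h$) converts the $n$-term into a $y$-linear one dominated by the first. The geometric-type tails of $Y_1$, $Z_1$ from (\ref{y})–(\ref{z}) guarantee finiteness of the MGFs near $0$, and the strict inequalities $H>EY_1$, $h<EZ_1$ give a strictly negative exponent for small enough $\lambda$. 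The only omitted triviality is that $[y/h,y]=\emptyset$ when $h<1$, which the paper mentions explicitly. The constants you obtain, like the paper's, depend on $\rho,M,q$ as well as $H$ (resp.\ $h$); this is consistent with how the lemma is used.
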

\begin{proof}
  We shall start with the second inequality. If $h< 1$ then there is
  nothing to prove as $[y/h,y]$ is empty. Assume that $1\le h<EZ_1$
  and apply Corollary~\ref{corexp} to $\tilde{\Pi}_y^\rho$ with $n$
  in place of $\theta n$. As
  $A_{n,y}\subset\{(r_1,\dots,r_y)\in\IN^y\,:\, \sum_{i=1}^{n-1}r_i\le
  hn\}$, we get that for $n\in[y/h,y]$
  \begin{equation}
    \label{nn}
    e^{-c'(h)(y/h)}\ge e^{-c'(h)n}\ge \sum_{A_{n,y}} \prod_{i=1}^y
\rho(1-q)^{r_i}F_M(r_{i-1}, r_i, u_{i-1}).
  \end{equation}
If $y\ge n+1$ then we write the right-hand side of the above
inequality as
\begin{multline*}
  \sum_{A_{n,n}} \prod_{i=1}^n
\rho(1-q)^{r_i}F_M(r_{i-1}, r_i,
u_{i-1})\times\left(\sum_{r_{n+1},\dots,r_y\in\IN} \prod_{i=n+1}^y
\rho(1-q)^{r_i}F_M(r_{i-1}, r_i, u_{i-1})\right),
\end{multline*}
and apply (\ref{co2}) to the last summation. Thus, for all
$n\in[y/h,y]$ we have
\begin{align*}
  e^{-c'(h)y/h}&\ge (1-e^{-M})\sum_{A_{n,n}} \prod_{i=1}^n
  \rho(1-q)^{r_i}F_M(r_{i-1}, r_i,
  u_{i-1})\\&\overset{(\ref{an})}{\ge} (1-e^{-M})\sum_{A_n}
  \prod_{i=1}^n \rho(1-q)^{r_i}F_M(r_{i-1}, r_i,
  u_{i-1})\\&=(1-e^{-M})(1-q)^y\sum_{A_n} \prod_{i=1}^n \rho
  F_M(r_{i-1}, r_i, u_{i-1}).
\end{align*}

The proof of the first inequality is similar. Notice that for all
$n\in[1,y/H]$ we have $A_{n,y}\subset\{(r_1,\dots,r_y)\in\IN^y\,:\,
\sum_{i=1}^{[y/H]}r_i\ge H(y/H)\}$. Again by Corollary~\ref{corexp}
for $\tilde{\Pi}_y^\rho$ and $y/H$ in place of $\theta n$ we get
\begin{equation*}
    e^{-c(H)y/H}\ge \sum_{A_{n,y}} \prod_{i=1}^y
\rho(1-q)^{r_i}F_M(r_{i-1}, r_i, u_{i-1}),\quad  \text{for all $n\in[1,y/H]$}.
  \end{equation*}
The rest of the proof follows the proof of the second inequality.
\end{proof}

\begin{lemma} \label{lemfinal} Let $h$ be as in Lemma~\ref{corbond}.
  There exists a strictly positive $c_2=c_2(\rho,M)$ such that for $y$ large
  \[(1-q)^y\sum_{1\le n< y/h}\sum_{A_n}
  \prod_{i=1}^n \rho F_M(r_{i-1}, r_i, u_{i-1}) \geq c_2.\]
\end{lemma}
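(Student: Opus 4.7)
The plan is to recognise the sum as a discrete renewal-type quantity and lower-bound it via a local CLT argument.

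First I would rewrite the left-hand side as $\sum_{1\le n< y/h}\Pi^\rho_n(A_n)$. By Corollary~\ref{corbase}, $\Pi^\rho_n(\IN^n)\ge 1-e^{-M}$, so
\[
\Pi^\rho_n(A_n)\ge (1-e^{-M})\,\tilde{\Pi}^\rho_n(S_n=y),\qquad S_n:=R_1+\cdots+R_n,
\]
and it suffices to establish a uniform positive lower bound on $\sum_{1\le n< y/h}\tilde{\Pi}^\rho_n(S_n=y)$. The same normalisation converts the tail bound of Corollary~\ref{corbond} into an exponentially small estimate on $\sum_{n\ge y/h}\tilde{\Pi}^\rho_n(S_n=y)$, so it is enough to prove $\sum_{n\ge 1}\tilde{\Pi}^\rho_n(S_n=y)\ge c'>0$ for all large $y$.

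Second, under $\tilde{\Pi}^\rho_n$ the coordinates $R_i$ are nearly i.i.d.: by (\ref{defF}) the kernel $F_M(\ell,r,u)$ differs from $F_M(0,r,0)=e^{-M}/(2r)$ only by a factor in $[1,(1-e^{-M})^{-1}]$, so the dependence on $(\ell,u)$ is bounded multiplicatively. Combined with the stochastic sandwich of Lemma~\ref{lem54}, which yields uniform bounds on conditional means and variances of $R_i$ with typical mean $\mu$ lying strictly between $h$ and $EY_1$, the law of $S_n$ satisfies a local CLT of the form
\[
\tilde{\Pi}^\rho_n(S_n=y)\ge \frac{c}{\sqrt{n}}
\]
for all $n$ in a window of width $\Theta(\sqrt{y})$ centred at $y/\mu$. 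Since $\mu>h$, this window lies inside $[1,y/h)$ for $y$ large, so summing over its $\Theta(\sqrt{y})$ integer values of $n$ yields a uniform positive lower bound $c_2$, completing the argument.

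The main technical obstacle is justifying the LCLT for the non-i.i.d.\ measures $\tilde{\Pi}^\rho_n$. I would handle this either by a direct characteristic-function estimate exploiting the weak dependence (the transition kernel has bounded Radon--Nikodym derivative with respect to its $\ell,u\to\infty$ limit), or by coupling $\tilde{\Pi}^\rho_n$ to an i.i.d.\ log-series reference sequence (with the heuristic parameter $1-e^{-K}$ from the derivation preceding the lemma) and transferring the classical LCLT through the coupling. Either route reduces the problem to the renewal theorem for i.i.d.\ positive integer-valued increments with finite mean, for which $\sum_{n}\IP(S_n=y)\to 1/\mu$ as $y\to\infty$.
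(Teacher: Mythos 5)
Your plan is conceptually natural but takes a genuinely different route from the paper, and as written it has a real gap at the heart of the argument.

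The paper's proof is purely algebraic and never invokes a local limit theorem. It partitions $\IN^y=\bigsqcup_{n=1}^y A_{n,y}$ so that $\sum_n\Pi^\rho_y(A_{n,y})=\Pi^\rho_y(\IN^y)\ge 1-e^{-M}$; uses (\ref{nn}) to show the tail $n\in[y/h,y]$ contributes less than $(1-e^{-M})/2$; then applies (\ref{co2}) to pass from $A_{n,y}$ to $A_{n,n}$; and finally converts $A_{n,n}$ (where $\sum_{i\le n}r_i\ge y>\sum_{i<n}r_i$) to $A_n$ (where $\sum_{i\le n}r_i=y$ exactly) by exploiting the monotone decrease of $F_M(\ell,r,u)$ in $r$ and explicitly summing the geometric tail $\sum_{r_n\ge y-\sum_{i<n}r_i}(1-q)^{r_n}$, which produces the factor $1/q$. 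This yields $c_2=q(1-e^{-M})^2/2$ with no asymptotic analysis of the gap distribution at all.

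In your proposal, everything hinges on the pointwise estimate $\tilde{\Pi}^\rho_n(S_n=y)\ge c/\sqrt{n}$ in a $\Theta(\sqrt{y})$-window around $n\approx y/\mu$, and both justifications you sketch fail. The stochastic sandwich of Lemma~\ref{lem54} controls only distribution functions, i.e.\ probabilities of half-lines; stochastic dominance does not transfer local (pointwise) bounds on $\IP(S_n=y)$, so it cannot deliver an LCLT. The Radon--Nikodym comparison fares no better: the per-coordinate factor $F_M(r_{i-1},r_i,u_{i-1})/F_M(0,r_i,0)$ does lie in $[1,(1-e^{-M})^{-1}]$, but the \emph{product} over $i=1,\dots,n$ lies in $[1,(1-e^{-M})^{-n}]$, so the density of $\Pi^\rho_n$ against the i.i.d.\ reference is exponentially far from $1$ in $n$ rather than ``bounded multiplicatively.'' (The same exponential drift appears if you use the $\ell,r\to\infty$ limit of $F_M$ as reference.) Making the LCLT route rigorous would require a genuine quantitative local CLT for this Markov chain on $\IN$, most plausibly by conditioning on the (typical) event that all $r_i$ are large, where $F_M$ is uniformly close to its limit --- that is substantially more work than the statement deserves, given that the paper's monotonicity-plus-geometric-tail trick yields the bound in three lines. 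You should also be aware that the entire delicate estimate is only needed to produce \emph{some} $c_2(\rho,M)>0$; no uniformity in $\rho$ or $M$ is required, which is further indication that the LCLT machinery is heavier than necessary here.
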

\begin{proof}
  We start with (\ref{nn}) and sum up over $n\in[y/h,y]$. The number
  of terms in this summation does not exceed $y$, therefore, for all
  sufficiently large $y$ the sum is less than $(1-e^{-M})/2$. Since
  the sum over $n\in[0,y)$ is equal to $\Pi^\rho_y(\IN^y)\ge
  (1-e^{-M})$, we conclude that \[\frac12\,(1-e^{-M})\le
  \sum_{1\le n< y/h}\sum_{A_{n,y}} \prod_{i=1}^y \rho(1-q)^{r_i}F_M(r_{i-1},
  r_i, u_{i-1}).\] Just as in the previous proof, if $y\ge n+1$ then
  we perform first the summation over $r_{n+1},\dots,r_y\in\IN$ and
  apply (\ref{co2}) to get \[\frac12\,(1-e^{-M})\le
  (1-e^{-M})^{-1}\sum_{1\le n< y/h} \sum_{A_{n,n}} \prod_{i=1}^n
  \rho(1-q)^{r_i}F_M(r_{i-1}, r_i, u_{i-1}). \] Next, we replace
  $F_M(r_{n-1},r_n,u_{n-1})$ with its upper bound $F_M(r_{n-1},y-\sum_{i=1}^{n-1}
  r_i,u_{n-1})$ and sum over $r_n$ from $y-\sum_{i=1}^{n-1} r_i$ to
  infinity. We obtain
  \begin{multline*}
    \frac12\,(1-e^{-M})\le \frac{1}{q(1-e^{-M})}\,\sum_{1\le n< y/h}
    \sum_{A_n} \prod_{i=1}^n \rho(1-q)^{r_i}F_M(r_{i-1}, r_i,
    u_{i-1})\\=\frac{(1-q)^y}{q(1-e^{-M})}\,\sum_{1\le n< y/h}
    \sum_{A_n} \prod_{i=1}^n \rho F_M(r_{i-1}, r_i, u_{i-1}).
  \end{multline*}
This gives the desired statement with $c_2=q(1-e^{-M})^2/2$.
\end{proof}
\begin{corollary}\label{nc}
  Let $\rho=p/(1-p)$, $q=q(\rho)$ be as defined in
  Corollary~\ref{corbase}, and $c_2$ be the same as in
  Lemma~\ref{lemfinal}. Then in (\ref{23}), \[Z_{0,y}^{-1}(1-p)^y\left(\frac{e^M}{\rho}+1
  \right)\le c_2^{-1}(1-q)^y.\] 
\end{corollary}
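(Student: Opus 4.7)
The plan is to exploit the fact that $Q_{0,y}$ is a probability measure, turning the expression in question into the reciprocal of a sum to which Lemma~\ref{lemfinal} directly applies. First I would sum the identity (\ref{23}) over all admissible configurations. Since every environment on $(0,y)$ is uniquely described by a pair $(n,\overline{R}_n)$ with $n\in\{1,\dots,y\}$ and $\overline{R}_n\in A_n$ (the set defined in (\ref{an})), summing $Q_{0,y}(N=n,\overline{R}_n=(r_1,\dots,r_n))$ over these indices gives $1$. Pulling the prefactor out yields the identity
\begin{equation*}
Z_{0,y}^{-1}(1-p)^y\left(\frac{e^M}{\rho}+1\right)\sum_{n=1}^{y}\sum_{A_n}\prod_{i=1}^n\rho\,F_M(r_{i-1},r_i,u_{i-1})=1,
\end{equation*}
so that the quantity to bound is precisely the reciprocal of the double sum on the left.

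Next I would lower-bound that double sum by restricting the outer summation to $1\le n<y/h$. Lemma~\ref{lemfinal} says exactly that
\begin{equation*}
(1-q)^y\sum_{1\le n<y/h}\sum_{A_n}\prod_{i=1}^n\rho\,F_M(r_{i-1},r_i,u_{i-1})\ \ge\ c_2
\end{equation*}
for all sufficiently large $y$, i.e.\ the restricted sum is at least $c_2(1-q)^{-y}$. Since the unrestricted double sum dominates the restricted one, combining with the identity of the first step and inverting gives
\begin{equation*}
Z_{0,y}^{-1}(1-p)^y\left(\frac{e^M}{\rho}+1\right)\ \le\ \frac{1}{c_2(1-q)^{-y}}\ =\ c_2^{-1}(1-q)^y,
\end{equation*}
which is the claim.

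There is essentially no obstacle here: once one recognizes that Corollary~\ref{nc} is just the normalization condition for $Q_{0,y}$ rewritten via (\ref{23}) and then estimated using Lemma~\ref{lemfinal}, the rest is bookkeeping. The only point requiring mild care is making sure that the parametrization $(n,\overline{R}_n)\in\bigcup_n A_n$ really does enumerate every environment exactly once, so that the normalization is $1$ rather than some other constant; but this follows from the convention (stated just before Corollary~\ref{sp}) that $R_1$ is measured from the origin to the first occupied site in $(0,y)$ and $R_N$ from the last occupied site to $y$, with $N=1,\,R_1=y$ when $(0,y)$ is empty.
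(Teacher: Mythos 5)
Your argument is correct and is essentially the intended proof: the paper states this as a Corollary with no written proof because it follows directly from Lemma~\ref{lemfinal} together with the observation that summing (\ref{23}) over all $(n,\overline{R}_n)$ with $\sum_i r_i = y$ gives $1$, exactly as you set it up. The only small point you could make explicit is that in the normalization step the product form $\prod_i p(1-p)^{r_i-1}F_M(\cdot)$ and the form $(1-p)^y\prod_i \rho F_M(\cdot)$ agree because $\sum_i r_i = y$; but you have already used the second form from (\ref{23}), so there is nothing to fix.
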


\subsection{Final step: construction of 
  $\Omega_y^1$ and $\Omega_y^2$} \label{last}

Throughout this subsection we suppose that, for a given environment
$\omega$, the occupied sites in $(0,y)$ are $\{x_1,x_2,\dots,x_{n-1}\}$, where
$0=x_0<x_1<\dots<x_{n-1}<y=:x_n$. As before, we set $r_i=x_i-x_{i-1}$,
$i=1,2,\dots,n$. 

Let $\rho=p/(1-p)$. Fix an $\epsilon>0$
and set $h=e^{2(1- \epsilon/2) (e^M - 1)/\rho}$. We claim that $h<EZ_1$
for all sufficiently small $\rho$ ($M$ is fixed). Indeed, by (\ref{z})
and
Proposition~\ref{propcalc} \[EZ_1=\frac{\gamma(M)\rho(1-q(\rho))}{q(\rho)},\
\text{where} \ \frac{1}{q(\rho)}=e^{2(e^M-1)(1+o(\rho))/\rho}\
\text{as }\rho\to 0.\] 

Let ${\cal A}_n:=\{\omega\in\Omega\,:\, N(\omega,y)=n,\
(R_1(\omega),R_2(\omega)\dots,R_n(\omega))\in A_n\}$, where $A_n$ was
defined in (\ref{an}). Then by Lemma~\ref{corbond} and
Corollary~\ref{nc} we have that for all sufficiently small $p$ and
large $y$ ($c_1$ and $c_2$ do not depend on
$y$)
\begin{equation}
  \label{om}
  Q_{0,y}\left(\cup_{\{n:\ y/h\le n\le y\}}{\cal A}_{n}\right)\le
\frac{y}{c_2}\,e^{-c_1'(h)y}.
\end{equation}

Now we are ready to construct $\Omega^1_y$. Let
$\Omega_y^1=\cup_{\{n:\,1\le n<
  y/h\}}{\cal A}_n$. The bound (\ref{om}) implies that $Q_{0,y}(\Omega_y^1)\ge
1/2$ for all sufficiently large $y$. Thus, (L1) is satisfied. Let
$\omega\in\Omega_y^1$. By Lemma~\ref{U} and Cauchy-Schwarz
inequality, \[E_{Q^\omega_{0,y}}\tau_y\ge \frac13 \sum_{i=1}^nr_i^2\ge
\frac{y^2}{3n}\ge \frac{y^2}{3(y/h)}\ge
\frac{hy}{3}\ge\frac{y}{3}\,e^{(1-\epsilon)K(p,M)}.\] This gives
us (L2) and the desired lower bound (\ref{lb}).

The upper bound is somewhat more involved. Lemma~\ref{U} provides us
with the following estimate:
\[E_{Q^\omega_{0,y}}\tau_y\le
\frac{1}{3(1-e^{-M})}\sum_{j=1}^nr_j^2.\] We set
$\Omega_y^2=\cup_{\{n:\,1\le n< y/h\}}{\cal B}_n$, where \[{\cal
  B}_n=\left\{\omega\in\Omega\,:\, N(\omega,y)=n,\
  \sum_{i=1}^nR_i(\omega)=y,\ \sum_{i=1}^nR_i^2(\omega)\le
  \frac{C_3 n}{\log^2(1/(1-q))}\right\},\] and $C_3$ is a sufficiently
large constant, which we shall determine later (see
Lemma~\ref{corfin}).  The term $\log(1/(1-q))$ gives us the right
scaling, since under the probability measure $\tilde{\Pi}^\rho_n$ the
expected ``gap'' size is roughly of order $1/q\sim 1/\log(1/(1-q))$ as
$q\to 0$. We shall show that with this scaling the constant $C_3$ can
indeed be chosen uniformly over all small $p$ and large $M$.

At first, we check that $\Omega_y^2$ satisfies (U2). Let
$\omega\in\Omega_y^2$. Then $n< y/h$
and \[E_{Q^\omega_{0,y}}\tau_y\le \frac{C_3 n}{3(1-e^{-M})\log^2(1/(1-q))}\le
\frac{C_3 y}{3(1-e^{-M})q^2h}. \] By Proposition~\ref{propcalc}, $q^{-1}\le
\exp(2(1+\epsilon/4)(e^M-1)/\rho)$ for all sufficiently small
$\rho$. Substituting the expressions for $h$ and $\rho$ we get the
desired upper bound \[E_{Q^\omega_{0,y}}\tau_y\le
\frac{C_3y}{3(1-e^{-M})}\,e^{(1+\epsilon)K(p,M)}.\]

Our last task is to establish (U1). Notice that \[\Omega\setminus
\Omega_y^2\subset \left(\cup_{\{n:\,y/h\le n\le y\}} {\cal
    A}_n\right)\cup\left(\cup_{\{n:\,1\le n\le y/H\}} {\cal
    A}_n\right)\cup(\cup_{y/H\le n\le y/h}{\cal D}_n),\] where $h$ and $H$
are chosen as in Lemma~\ref{corbond}, and \[{\cal D}_n=
\left\{\omega\in\Omega\,:\,N(\omega,y)=n,\
  \sum_{i=1}^n R_i(\omega)=y,\ 
  \sum_{i=1}^nR_i^2(\omega)> \frac{C_3 n}{\log^2(1/(1-q))}\right\}.\]
Therefore, by (\ref{om}), Lemma~\ref{corbond}, and Corollary~\ref{nc},
\begin{equation}
  \label{qu}
  Q_{0,y}(\Omega\setminus \Omega_y^2)\le
\frac{y}{c_2}\,e^{-c_1'(h)y}+\frac{y}{c_2}\,e^{-c_1(H)y}+y\max_{y/H\le
  n\le
  y/h}Q_{0,y}({\cal D}_n).
\end{equation}
Again we estimate first probabilities
$\tilde{\Pi}_n^\rho$ of the relevant events.
\begin{lemma}
 \label{corfin}
 For $ \theta_0 \in(0,1]$ there are $c_3,C_3>0$ such that for all
 sufficiently large $n$ and
 $\theta\in[\theta_0,1]$ \[\tilde{\Pi}_n^\rho\left( \sum_{i=1}^{\theta
     n}R_i^2>\frac{C_3 \theta n}{\log^2(1/(1-q))}\right)\le
 e^{-c_3\sqrt{\theta n}}.\]
\end{lemma}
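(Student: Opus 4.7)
The plan is to reduce by stochastic domination to a large-deviation estimate for sums of i.i.d.\ non-negative variables, and then apply Markov's inequality at a moment of order $k\asymp\sqrt{\theta n}$, controlled via a Rosenthal-type inequality.

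First, since $(r_1,\ldots,r_n)\mapsto \sum_{i\le \theta n} r_i^2$ is coordinatewise non-decreasing, Lemma~\ref{lem54}(a) gives
\[
\tilde{\Pi}_n^\rho\Big(\sum_{i=1}^{\theta n} R_i^2 > s\Big) \le P\Big(\sum_{i=1}^{\theta n} Y_i^2 > s\Big),
\]
so it suffices to establish the bound for the i.i.d.\ $Y_i$. From the tail (\ref{y}) a short computation using $\sum_{r\ge x}(1-q)^r/r \le (1-q)^x/(xq)$ yields $\IE\,Y_1^{2k}\le C\Gamma\rho\,(2k-1)!/q^{2k}$ for every $k\ge 1$. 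In particular $\IE\,Y_1^2 = O(1/(q^2\log(1/q)))$, which by Proposition~\ref{propcalc} is smaller than the per-variable target $C_3/\log^2(1/(1-q))\asymp C_3/q^2$ by a factor $\log(1/q)$; this gap is what makes the event a large deviation.

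Because $Y_1^2$ has only a stretched-exponential tail $\asymp e^{-q\sqrt{t}}$ (up to polynomial factors), it has no positive exponential moment. In particular, truncation combined with Bernstein or Bennett at the natural scale $T\asymp\sqrt{\theta n}/\log(1/(1-q))$ (the scale at which the single-big-jump probability $\theta n\,P(Y_1>T)$ matches $e^{-c\sqrt{\theta n}}$) produces a concentration bound for the truncated sum that is only $O(1)$ in $\theta n$, which is insufficient. The natural remedy is the high-moment Markov method: a Rosenthal-type inequality for i.i.d.\ non-negative summands gives
\[
\IE\Big[\Big(\sum_{i=1}^N Y_i^2\Big)^k\Big] \le K_k\,\Big[(N\,\IE\,Y_1^2)^k + N\,\IE\,Y_1^{2k}\Big],
\]
with $K_k$ of controlled growth. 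Choosing $k=\lceil c\sqrt{\theta n}\rceil$, $N=\theta n$, and $s=C_3 N/\log^2(1/(1-q))$, and dividing by $s^k$, the first bracket term becomes $K_k\,(O(1)/(C_3\log(1/q)))^k$ and (after Stirling on $(2k-1)!$, using $k^2\asymp N$) the second becomes $K_k\,N\,(O(1)/C_3)^k$. Taking $c$ small and $C_3$ large, uniformly in the regime, brings both factors below $e^{-c_3\sqrt{\theta n}}$ and absorbs the growth of $K_k$.

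The main obstacle is precisely this passage through moments: standard exponential-type concentration fails because of the heavy tail of $Y_1^2$, so one is forced to work at moment order $k\asymp\sqrt{\theta n}$. The ``single big jump'' intuition---that the most likely way for $\sum Y_i^2$ to exceed the target is a single $Y_i$ of size $\asymp\sqrt{\theta n}/\log(1/(1-q))$---is captured by the $N\,\IE\,Y_1^{2k}$ term in Rosenthal's bound. The rest of the argument is bookkeeping of constants, uniformly in $p\to 0$ (and in the $M\to\infty$ adaptation) via Proposition~\ref{propcalc}, and uniformly in $\theta\in[\theta_0,1]$.
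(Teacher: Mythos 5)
The reduction via Lemma~\ref{lem54}(a) from $\tilde\Pi_n^\rho$ to the i.i.d.\ dominating sequence $(Y_i)$ is the same as the paper's. After that the routes diverge, and yours has a gap at the key quantitative step.

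Your plan is to bound $P(\sum_{i\le\theta n}Y_i^2>s)$ by Markov's inequality at moment order $k\asymp\sqrt{\theta n}$, using a Rosenthal-type bound $\IE[(\sum_{i\le N}Y_i^2)^k]\le K_k[(N\IE Y_1^2)^k+N\IE Y_1^{2k}]$ ``with $K_k$ of controlled growth,'' and then to absorb $K_k$ by taking $c$ small and $C_3$ large. The trouble is that the optimal $K_k$ in this inequality for non-negative summands is \emph{not} of controlled growth: by Johnson--Schechtman--Zinn it behaves like $(k/\log k)^k$, i.e.\ $\exp\bigl(k\log(k/\log k)\bigr)$. With $k\asymp\sqrt{\theta n}$, the factor $\log(k/\log k)\to\infty$ as $n\to\infty$, so $K_k$ grows faster than $e^{ak}$ for any fixed $a$. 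Your bookkeeping requires $K_k\cdot(\text{const}/C_3)^k\le e^{-c_3\sqrt{\theta n}}$ uniformly in $n$; spelling this out gives a requirement of the form $k/(\log k)\cdot\text{const}/C_3\le e^{-c_3/c}$, which forces $C_3\to\infty$ with $n$. So the ``absorption'' step fails, and the proof does not close. (A sharper inequality such as Lata{\l}a's moment bound for sums of non-negative random variables would work here, but that is a genuinely different tool from the Rosenthal form you write, and the way you justify the constant is where the argument breaks.)

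Separately, your reason for abandoning exponential-moment methods is too hasty. You argue that truncating at $T\asymp\sqrt{\theta n}/\log(1/(1-q))$ and applying Bernstein gives only an $O(1)$ exponent because the ``$Mt/3$'' term dominates; that is correct for the \emph{optimized} Bernstein exponent, but it does not rule out a direct Chernoff bound at a deliberately suboptimal parameter. This is precisely what the paper does: after rescaling to $X_i:=Y_i^2\log^2(1/(1-q))$, whose tail satisfies $P(X_i\ge x)\le e^{-\sqrt x}$ for $x\ge\ell$ uniformly in the regime, Lemma~\ref{expbound} truncates $X_i$ at level $\epsilon n$ and uses the exponential moment at the fixed scale $\lambda=1/(2\sqrt n)$, giving $\IE e^{\lambda X_1^{\epsilon n}}\le 1+O(1/\sqrt n)$ and hence $P(\sum X_i^{\epsilon n}\ge Cn)\le e^{-c\sqrt n}$; the excess over the truncation level is handled by a union bound, again at cost $ne^{-\sqrt{\epsilon n}}$. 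This Nagaev-type argument achieves the desired stretched-exponential rate without the heavy-tailed obstruction you describe. I would recommend following the paper's route: first verify the uniform sub-exponential tail of $Y_i^2\log^2(1/(1-q))$ from (\ref{y}) and Proposition~\ref{propcalc}, then invoke Lemma~\ref{expbound}, rather than attempting a moment-based proof where the combinatorial constant cannot be controlled.
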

\begin{proof}
  By part (a) of Lemma~\ref{lem54} \[\tilde{\Pi}_n^\rho\left(
    \sum_{i=1}^{\theta n}R_i^2>\frac{C_3 \theta
      n}{\log^2(1/(1-q))}\right)\le P\left(\sum_{i=1}^{\theta
      n}Y_i^2>\frac{C_3 \theta n}{\log^2(1/(1-q))}\right).\] Thus we
  need a large deviations upper bound for a sequence of i.i.d.\ random
  variables $Y_i^2\log^2(1/(1-q))$, $i\in\IN$, which have
  sub-exponential tails: there are $\ell\in\IN$, $\rho_0,M_0>0$
  such that for all $x\ge \ell$ and
  $(\rho,M)\in[0,\rho_0]\times[M_0,\infty)$
  \begin{multline*}
    P(Y_i^2\log^2(1/(1-q))\ge x)=\Gamma(M)\sum_{r\ge
        \sqrt{x}/\log(1/(1-q))}\frac{\rho (1-q)^r}{r}\\ \le
    \frac{\Gamma(M)\rho(1-q)^{-\sqrt{x}/(\log(1-q)}}
      {\sqrt{x}/\log(1/(1-q))}\sum_{r\ge 0}(1-q)^r\le 
    \frac{\Gamma(M)\rho }{\sqrt{\ell}}\,e^{-\sqrt{x}}q^{-1}\log\frac1{1-q}\le 
    \,e^{-\sqrt{x}}.
  \end{multline*}
 The statement of the lemma now follows from Lemma~\ref{expbound}.
\end{proof}
It only remains to convert (as we have done before) the previous
result into a bound on $Q_{0,y}$ probability.
\begin{lemma}
  \label{corbond'} Let $H$ be chosen as in Lemma~\ref{corbond}. There
  is strictly positive $c_4=c_4(H)$ such that for all $y/H\le n\le y$
\[
 (1-q)^y\sum_{D_n}\prod_{i=1}^n \rho F_M(r_{i-1}, r_{i}, u_{i-1})
 \leq e^{-c_4 \sqrt{y}}.
\] 
\end{lemma}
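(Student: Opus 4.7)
The plan is to bound the left-hand side by a $\tilde\Pi_y^\rho$-probability of the event
\[
E_n:=\Big\{(r_1,\dots,r_y)\in\IN^y:\ \textstyle\sum_{i=1}^n r_i^2 > C_3 n/\log^2(1/(1-q))\Big\},
\]
and then invoke Lemma~\ref{corfin} with $\theta_0=1/H$ and $\theta=n/y$.

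First I would use that $\sum_{i=1}^n r_i=y$ on $D_n\subset A_n$, so $(1-q)^y=\prod_{i=1}^n(1-q)^{r_i}$; this turns the left-hand side into the $\Pi_n^\rho$-mass of $D_n$ in the notation of (\ref{pi}). Following the template of Lemma~\ref{corbond} and Lemma~\ref{lemfinal}, for $n<y$ I would embed each $(r_1,\dots,r_n)\in D_n$ as the initial segment of a configuration in $\IN^y$: applying (\ref{co2}) to the summation over $r_{n+1},\dots,r_y\in\IN$ contributes a factor at least $(1-e^{-M})$, yielding
\[
\Pi_n^\rho(D_n)\le \frac{1}{1-e^{-M}}\,\Pi_y^\rho(A_{n,y}\cap E_n) \le \frac{1}{1-e^{-M}}\,\tilde\Pi_y^\rho(E_n),
\]
where the second inequality uses $\Pi_y^\rho(\IN^y)\le 1$ from (\ref{co1}). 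The boundary case $n=y$ requires no extension and gives the same bound trivially.

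Next I would invoke Lemma~\ref{corfin} with $n$ there replaced by $y$ and $\theta=n/y$. Since $n\in[y/H,y]$ with $H>1$ fixed, $\theta\in[1/H,1]$, so taking $\theta_0:=1/H$ the lemma applies and gives $\tilde\Pi_y^\rho(E_n)\le e^{-c_3\sqrt{n}}\le e^{-c_3\sqrt{y/H}}$ for all $y$ large. Absorbing the $M$-dependent prefactor $(1-e^{-M})^{-1}$ into the exponent (by choosing $c_4$ strictly less than $c_3/\sqrt{H}$, which is valid for $y$ sufficiently large) will finish the proof.

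I do not anticipate any substantial obstacle: the argument mirrors the proof of Lemma~\ref{corbond}, with Corollary~\ref{corexp} replaced by Lemma~\ref{corfin} as the large-deviation input. The only delicate point is that $\theta=n/y$ must remain uniformly bounded below by some $\theta_0>0$ for the constants $c_3,C_3$ in Lemma~\ref{corfin} to be uniform — which is precisely why the restriction $n\ge y/H$ with $H$ fixed is essential.
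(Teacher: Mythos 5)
Your proof is correct and takes essentially the same route as the paper: identify $(1-q)^y\sum_{D_n}\prod_{i=1}^n\rho F_M(\cdot)$ with $\Pi_n^\rho(D_n)$, extend to $\IN^y$ at the cost of a factor $(1-e^{-M})^{-1}$ via (\ref{co2}), dominate by $\tilde\Pi_y^\rho(E_n)$, and invoke Lemma~\ref{corfin} with $\theta=n/y\in[1/H,1]$. The paper states the same argument more tersely by saying the proof follows that of Lemma~\ref{corbond} with $D_{n,y}$, $D_n$ in place of $A_{n,y}$, $A_n$; your explicit identification of $\theta_0=1/H$ as the reason the restriction $n\ge y/H$ is needed is exactly right.
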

\begin{proof}
  For $1\le n\le k\le y$ let
\begin{align*}
  D_{n,k}&:=\left\{(r_1\dots,r_k)\in\IN^k\,:\,\sum_{i=1}^n r_i<y,\
    \sum_{i=1}^n r_i\ge y,\sum_{i=1}^nr_i^2> 
    \frac{C_3 n}{\log^2(1/(1-q))}\right\}\\ D_n&:=
\left\{(r_1,r_2,\dots,r_n)\in\IN^n\,:\,\sum_{i=1}^nr_i=y,\
  \sum_{i=1}^nr_i^2> \frac{C_3 n}{\log^2(1/(1-q))}\right\}.
\end{align*}
Since $D_{n,y}\subset \{(r_1,\dots,r_y)\in\IN^y\,:\,
\sum_{i=1}^nr_i^2> C_3 n\log^{-2}(1/(1-q))\}$ and $n\ge y/H$,
we have by Lemma~\ref{corfin} that \[e^{-c_3\sqrt{y/H}}\ge
  e^{-c_3\sqrt{n}}\ge \sum_{D_{n,y}} \prod_{i=1}^y
\rho(1-q)^{r_i}F_M(r_{i-1}, r_i, u_{i-1}).\] The rest of the proof
follows the one of Lemma~\ref{corbond} with $D_{n,y}$ and $D_n$ in
place of $A_{n,y}$ and $A_n$.
\end{proof}
Lemma~\ref{corbond'} and Corollary \ref{nc} imply that \[\max_{y/H\le
  n\le y/h}Q_{0,y}({\cal D}_n)\le c_2^{-1}e^{-c_4 \sqrt{y}}.\] 
Together with (\ref{qu}) this estimate establishes (U1).
 
\appendix
 
\section{Proofs of technical lemmas}

\begin{proof}
  [Proof of Proposition~\ref{basic1}] Let $u_0=u_n=0$, and
  $u_k=E^k(\tau_n;\tau_n<\tau_0)$. Then 
  \begin{align*}
    u_k&=E^k(\tau_n;\tau_n<\tau_0)\\&=
    \frac12\,E^{k+1}(\tau_n+1;\tau_n<\tau_0)+
    \frac12\,E^{k-1}(\tau_n+1;\tau_n<\tau_0)\\&=
    \frac12\,u_{k+1}+\frac12\,u_{k-1}+\frac12\,P^{k+1}(\tau_n<\tau_0)+\frac12\,
    P^{k-1}(\tau_n<\tau_0)\\&=\frac12\,u_{k+1}+\frac12\,u_{k-1}+
    \frac12\,\frac{k+1}{n}+\frac12\,\frac{k-1}{n}=
    =\frac12\,u_{k+1}+\frac12\,u_{k-1}+\frac{k}{n}.
  \end{align*}
Denoting by $\Delta u_k$ the discrete Laplacian of $u_k$, we get that
  \[-\frac12\,\Delta u_k=\frac{k}{n},\ k=0,1,2,\dots,n.\]
  The continuous analog is $-\Delta u=2x$, $u(0)=u(1)=0$, $x\in[0,1]$,
  and it is easy to solve: $u(x)=x(1-x)(1+x)/3$. From the scaling
  property of the random walk  we conclude
  that $u_k=(3n)^{-1}k(n-k)(n+k)$. It is also easy to check
  directly that
  this expression indeed solves the equation $-\frac12\,\Delta
  u_k=k/n,\ k=0,1,2,\dots,n$, and satisfies the boundary conditions.
\end{proof}
\begin{proof}
[Proof of Proposition~\ref{basic2}]Notice that we can omit
  $\I_{\{\tau_y<\infty\}}$ from (\ref{qpm}), (\ref{qnf}), and
  (\ref{que}), since on the event $\{\tau_y=\infty\}$ the exponential
  function in the integrand vanishes $P$-a.s..  By the strong Markov
  property, for every $\lambda\ge 0$ and $y\in\IN$ we can write 
\begin{align*}
  \frac1y\log
  E\left(e^{-\sum_{n=0}^{\tau_y-1}(\lambda+V(S_n,\omega))}\right)&=
  \frac1y\sum_{k=0}^{y-1}\log
  E^k\left(e^{-\sum_{n=0}^{\tau_{k+1}-1}(\lambda+V(S_n,\omega))}\right),\\
  \frac{E_{Q^\omega_y}\tau_y}{y}&=\frac1y\sum_{k=0}^{y-1}
  E_{Q^{\omega,k}_{k+1}}(\tau_{k+1}).
\end{align*}
The ergodic theorem implies that
\begin{align*}
  \alpha_{\lambda+V}(1)& =-\IE\left(\log
E\left(e^{-\sum_{n=0}^{\tau_1-1}(\lambda+V(S_n,\omega))}\right)\right),\\
\frac{1}{v^\qu}&=\IE\left(E_{Q^\omega_1}(\tau_1)\right)=
\IE\left(E^\omega(\tau_1\,|\,\tau_1<\infty)\right).
\end{align*}
Next we show that
$\IE\left(E^\omega(\tau_1\,|\,\tau_1<\infty)\right)<\infty$. Let
$\Lambda(t)=-\log \IE(e^{-tV(0,\cdot)})$ and
$\ell(x)=\sum_{n=0}^{\tau_1-1}\I_{\{S_n=x\}}$. Then, since
$P^\omega(\tau_1<\infty)\ge e^{-V(0,\omega)}/2$,
\begin{align*}
  \IE\left(E^\omega(\tau_1\,|\,\tau_1<\infty)\right)&\le
  \IE\left(2e^{V(0,\cdot)}E\left(\tau_1
      e^{-\sum_{n=0}^{\tau_1-1}V(S_n,\cdot)}\right)\right) \le
  2E\left(\tau_1e^{-\sum_{x<0}\Lambda(\ell(x))}\right) \\&\le
  2P(\tau_1<\tau_{-1})+2\sum_{k=1}^\infty
  E\left(\tau_1e^{-k\Lambda(1)};\tau_{-k}<\tau_1<\tau_{-(k+1)}\right)
  \\ &\le 1+2\sum_{k=1}^\infty e^{-k\Lambda(1)}
  E\left(\tau_1;\tau_1<\tau_{-(k+1)}\right)
\overset{\mathrm{Prop.\,2.1}}{<}\infty.
\end{align*}
The function $\alpha_{\lambda+V}(1)$ is concave and
non-decreasing (see \cite[p.\,272]{Ze98}). Taking the right derivative
of $\alpha_{\lambda+V}(1)$ at $\lambda=0$ we obtain the last statement
of proposition. 
\end{proof}
 
\begin{proof}[Proof of Lemma~\ref{elem1}]
The proof is very simple. If $a_1=0$ then a trivial bound is given
by the survival probability $e^{-M}$. For $a_1\in\IN$ we have
\begin{align*}
  P&^{\omega}(\tau_1>\tau_{-a_1}\,|\,\tau_1<\infty)\\&=\frac{P^{\omega}
    (\tau_1<\infty\,|\,\tau_1>\tau_{-a_1})P^{\omega}(\tau_1>\tau_{-a_1})}
  {P^{\omega}(\tau_1<\infty)}=\frac{1}{a_1+1}\frac{P^{\omega,-a_1}
    (\tau_1<\infty)}{P^{\omega}(\tau_1<\infty)}.
\end{align*}
It is obvious that the last ratio is bounded by $e^{-M}$ but we shall need
an improvement of this estimate for the proof of Lemma~\ref{elem2}. We
have
\begin{multline*}
  \frac{P^{\omega,-a_1}
    (\tau_1<\infty)}{P^{\omega}(\tau_1<\infty)}=
  P^{\omega,-a_1}(\tau_0<\infty)=e^{-M}\left(\frac{1}{2a_1}+\frac{a_1-1}{2a_1}
    P^{\omega,-a_1}(\tau_0<\infty)\right.\\\left.+\frac{1}{2}
    P^{\omega,-a_1-1}(\tau_0<\infty\,|\,\tau_{-a_1}<\infty)
    P^{\omega,-a_1-1}(\tau_{-a_1}<\infty)\right)\\\le
  e^{-M}\left(\frac{1}{2a_1}+\left(1-\frac{1}{2a_1}\right)P^{\omega,-a_1}
    (\tau_0<\infty)\right).
\end{multline*}
This gives
\begin{equation}
  \label{a1}
  \frac{P^{\omega,-a_1}
    (\tau_1<\infty)}{P^{\omega}(\tau_1<\infty)}\le
\frac{e^{-M}}{2a_1(1-e^{-M})+e^{-M}},
\end{equation}
and the statement of the lemma follows.
\end{proof}
\begin{proof}[Proof of Lemma~\ref{elem2}]
  Suppose $a_1=0$. Recall that $\tau_0$ is the time of the first
  return to $0$. We just have to bound the expected number of visits
  to $0$ before $\tau_1$:
  \begin{align*}
    E^\omega(\I_{\{\tau_0<\tau_1\}}&\sum_{n=0}^{\tau_1-1}
    \I_{\{S_n=0\}}\,|\,\tau_1<\infty) \\&\le
    P^{\omega}(\tau_0<\tau_1\,|\,\tau_1<\infty)+
    E^\omega(\I_{\{\tau_0<\tau_1\}}\sum_{n=1}^{\tau_1-1}
    \I_{\{S_n=0\}}\,|\,\tau_1<\infty)\\ &\le e^{-M}+
    \frac{P^{\omega}(\tau_0<\tau_1)}{P^{\omega}(\tau_1<\infty)}E^\omega
    \left(\sum_{n=0}^{\tau_1-1}\I_{\{S_n=0\}}\I_{\{\tau_1<\infty\}} \right)
    \\&\le e^{-M}+ E^\omega
    \left(\sum_{n=0}^{\tau_1-1}\I_{\{S_n=0\}}\I_{\{\tau_1<\infty\}}
    \right) \le 3e^{-M}.
  \end{align*}
  We used the following obvious facts: $P^{\omega}(\tau_0<\tau_1)\le
  (2e^M)^{-1}$ and $P^{\omega}(\tau_1<\infty)\ge
  (2e^M)^{-1}$. The last inequality in the multi-line formula above
  is obtained by considering the Markov chain which starts at $0$,
  gets killed with probability $(1-e^{-M})$ after each visit to $0$,
  otherwise goes with equal probabilities to $1$ and $0$, and always
  gets absorbed at $1$.  For such chain the expected time to hit $1$
  restricted to the event that it reaches $1$ is equal to
  $2e^{-M}/(2-e^{-M})^2\le 2e^{-M}$.

 Assume now that $a_1\in\IN$.   
  Since $P^{\omega}(\tau_1<\infty)\ge 1/2$, we can replace the
  conditioning on the event $\{\tau_1<\infty\}$ with the intersection
  at the cost of factor 2. The time spent in $I$ before hitting $1$ is
  the sum of three terms: (1) $\tau_{-a_1}$; (2) the total time spent
  in excursions to $I$ from $-a_1$ that end up in $-a_1$; (3) the time
  needed to get from $-a_1$ to $1$ without returning to $-a_1$.

  Term (1) is estimated using Lemma~\ref{basic2}. We get $Ca_1$ times
  $P^{\omega,-a_1}(\tau_1<\infty)$. By (\ref{a1}) we have the required
  bound $Ce^{-M}/(1-e^{-M})$. Term (3) is bounded by the product of
  $P^{\omega}(\tau_{-a_1}<\tau_1)$, $e^{-M}$, and
  $1+E^{-a_1+1}(\tau_1;\tau_1<\tau_{-a_1})$. The latter is again
  bounded by $Ca_1$. Thus, (3) does not exceed $Ce^{-M}$. Term (2) is
  the sum of a random number of durations of excursions. The expected
  duration of one such excursion is bounded by $Ca_1$
  (Lemma~\ref{basic2}), and the number of them is at most geometric
  with expectation $e^{-M}/(1-e^{-M})$. The total is multiplied by
  $P^{\omega}(\tau_{-a_1}<\tau_1)$, since to have such excursion the
  path has to get to $-a_1$ before hitting $1$. The strong Markov
  property implies the desired bound. Adding the three terms we get
  the statement of the lemma.
\end{proof}
\begin{proof}[Proof of Lemma~\ref{elem3}]
  This is a rough bound, which is sufficient for our purposes. Just as
  in the proof of Lemma~\ref{elem2}, we replace the conditioning on
  the event $\{\tau_1<\infty\}$ by the intersection with
  $\{\tau_1<\infty\}$ at a cost of factor $2e^M$ (here we can not
  exclude the case $a_1=0$). The time spent in $I_j$, $j\in\IN$, is
  equal to the sum of durations of excursions contained in $I_j$ from
  $-a_{j+1}$ and $-a_j$ as well as crossings between the end points of
  $I_j$. The total number of such excursions and crossings is again at
  most geometric with expectation $e^{-M}/(1-e^{-M})$ and the expected
  duration is bounded by $C(a_{j+1}-a_j)$ due to
  Lemma~\ref{basic2}. But to have a chance to undergo at least one
  such excursion or crossing the walk has to reach $-a_j$ prior to
  hitting $1$, survive at least one visit to each $-a_i$, $1\le i<j$,
  and after completing the excursions reach $1$ before returning to
  $-a_j$ surviving the final run through $\{-a_i,j\ge i\ge 1\}$. Thus,
  the expectation, which we want to estimate is bounded
  by \[\frac{C(a_{j+1}-a_j)}{1-e^{-M}}\,\frac{e^{-M(2j-1)}}{(a_j+1)^2}.\]
  Replacing $2j-1$ with $j$ and $a_j+1$ with $a_1+1$ we obtain the
  desired upper bound.
\end{proof}

\begin{proof}[Proof of Lemma~\ref{F}] By simple
  gambler's ruin considerations we have
\[u_1=\frac{e^{-V(0,\omega)}}{2r_1}=e^{M-V(0,\omega)}F_M(0,r_1,0),\] and for $n>1$ 
  \begin{equation*}
    u_n = e^{-M}\left(\frac{1}{2r_n} + \frac{u_{n-1}u_n
      }{2r_{n-1}}+ u_n{\left(1-
        \frac{1}{2r_n} - \frac{1}{2r_{n-1}}
      \right)}\right).
  \end{equation*}
  Solving the last equation for $u_n$ we get for $n>1$
  \begin{equation*}
    u_n = \frac{e^{-
      M}}{2r_n}\left(1- e^{- M}\left(1- \frac{1}{2r_n} -
      \frac{1-u_{n-1}}{2r_{n-1}}\right)\right)^{-1}=F_M(r_{n-1},r_n,u_{n-1})
  \end{equation*}
as claimed.
\end{proof}

\begin{lemma}
  \label{A}
For every $p\in[0,1)$, $M\in[0,\infty)$,  $y\in\IN$, and 
$\omega\in\Omega$\[E_{Q^\omega_{0,y}}\tau_y\le 1+2y^2.\]
\end{lemma}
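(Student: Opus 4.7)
The plan is to translate the problem into a discrete Poisson equation on $\{0,1,\dots,y\}$ and then apply a supersolution/maximum-principle argument. For $0\le x\le y$ set $h(x):=P^{x,\omega}(\tau_y<\tau_0,\tau_y<\infty)$, so $h(0)=0$, $h(y)=1$, and $h$ satisfies the harmonic equation $h(x)=\tfrac12 e^{-V(x)}(h(x+1)+h(x-1))$ on $\{1,\dots,y-1\}$. For $1\le x\le y-1$ set $\bar u(x):=E^{x,\omega}(\tau_y\,\I_{\{\tau_y<\tau_0,\tau_y<\infty\}})$, extended by $\bar u(0)=\bar u(y):=0$. First-step analysis from $x=0$ gives $E_{Q^\omega_{0,y}}\tau_y=1+\bar u(1)/h(1)$, and the same analysis at interior sites yields the Poisson equation
\[
\bar u(x)-\tfrac12 e^{-V(x)}\bigl(\bar u(x+1)+\bar u(x-1)\bigr)=h(x),\qquad 1\le x\le y-1.
\]

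The key structural observation is that $h$ is discretely convex. Indeed, the harmonic equation and $V\ge 0$ give $h(x+1)+h(x-1)=2e^{V(x)}h(x)\ge 2h(x)$, so $\Delta^2 h\ge 0$. Since $h(0)=0$ and the increments $\Delta h(k)$ are non-decreasing, $h(x)=\sum_{k=0}^{x-1}\Delta h(k)\le x\,\Delta h(x)$, which rearranges to $\Delta h(x)\ge h(x)/x$, i.e.\ $\{h(x)/x\}$ is non-decreasing. Skipping one index, this yields the key inequality
\[
(x-1)\,h(x+1)\ge (x+1)\,h(x-1),\qquad x\ge 1.
\]

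I next propose $v(x):=h(x)(y^2-x^2)/3$ as a supersolution. Expanding $v(x\pm 1)$ and using the harmonic equation for $h$ to absorb the main term, a short calculation yields
\[
v(x)-\tfrac12 e^{-V(x)}\bigl(v(x+1)+v(x-1)\bigr)=\frac{e^{-V(x)}}{6}\bigl[(2x+1)\,h(x+1)-(2x-1)\,h(x-1)\bigr],
\]
and this exceeds $h(x)=\tfrac12 e^{-V(x)}(h(x+1)+h(x-1))$ precisely when $(x-1)h(x+1)\ge(x+1)h(x-1)$, which holds by the previous step. Since $v$ and $\bar u$ share zero Dirichlet boundary data, the discrete maximum principle gives $\bar u\le v$ on $\{0,\dots,y\}$; evaluated at $x=1$ this reads $\bar u(1)/h(1)\le (y^2-1)/3$, hence $E_{Q^\omega_{0,y}}\tau_y\le 1+(y^2-1)/3\le 1+2y^2$.

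The main obstacle to a simpler proof is that one cannot just apply $e^{-V(\gamma)}\le 1$ separately in the numerator and denominator of $\bar u(1)/h(1)$: the denominator can be exponentially small in $\sum_x V(x)$, so only the joint cancellation between numerator and denominator produces a bound polynomial in $y$. The supersolution argument above captures this cancellation, and the only non-trivial input is the convexity of $h$, which is immediate from the Schr\"odinger-type harmonic equation with non-negative potential.
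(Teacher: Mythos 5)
Your proof is correct, and it takes a genuinely different route from the paper's. Let me first note a small gap and then compare.

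\textbf{A minor point to tighten.} The discrete maximum principle step, while true, deserves a word since the operator here is $Lw(x) = w(x) - \tfrac12 e^{-V(x)}[w(x+1)+w(x-1)]$ and the coefficients $\tfrac12 e^{-V(x)}$ sum to $e^{-V(x)} \le 1$ rather than to $1$. Write $w=v-\bar u$, so $Lw\ge 0$ in the interior and $w(0)=w(y)=0$. If $w$ had a negative minimum $-m<0$ attained at some interior point, pick the leftmost such minimizer $x^*$; then $w(x^*-1)>w(x^*)$ while $w(x^*+1)\ge w(x^*)$, so $\tfrac12 e^{-V(x^*)}[w(x^*+1)+w(x^*-1)] > e^{-V(x^*)}w(x^*)\ge w(x^*)$ (the last because $w(x^*)<0$ and $e^{-V(x^*)}\le 1$), contradicting $Lw(x^*)\ge 0$. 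That closes the argument. Also, you should note explicitly that the monotonicity of $h(x)/x$ extends up to $x=y$: convexity of $h$ on $\{0,\dots,y\}$ together with $h(0)=0$ gives $h(x)\le x\,\Delta h(x)$ for $1\le x\le y-1$, hence $h(x)/x\le h(x+1)/(x+1)$ up to $x=y-1$, and this is precisely what is needed for the supersolution inequality at $x=y-1$, namely $(y-2)h(y)\ge y\,h(y-2)$. Your statement of the monotonicity covers this, but it is worth flagging that the boundary case uses $h(y)=1$.

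\textbf{Comparison with the paper.} The paper's proof is probabilistic: it decomposes $\tau_y = 1+\sum_{x=1}^{y-1}\ell_y(x)$ where $\ell_y(x)$ is the occupation time of site $x$, and shows via a strong Markov argument that $Q^\omega_{0,y}(\ell_y(x)>m)\le P^x(\tau_x^{(m)}<\tau_y\wedge\tau_0)$; the killing factors cancel exactly between numerator and denominator because each extra visit to $x$ contributes the \emph{same} multiplicative weight to both. Summing the geometric tail yields $E_{Q^\omega_{0,y}}\ell_y(x)\le 2x(y-x)/y$ and hence $1+\tfrac{2}{y}\sum x(y-x)\le 1+2y^2$. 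Your proof replaces this with a PDE-style supersolution argument: the positive-potential harmonic equation $h(x)=\tfrac12 e^{-V(x)}(h(x+1)+h(x-1))$ forces discrete convexity of $h$, whence $h(x)/x$ is non-decreasing, and the ansatz $v(x)=h(x)(y^2-x^2)/3$ is a supersolution of the Poisson equation satisfied by $\bar u(x)=E^{x,\omega}(\tau_y\I_{\{\tau_y<\tau_0,\tau_y<\infty\}})$. What your approach buys is a clean structural insight (convexity of the crossing probability under a non-negative potential, which is exactly the ``cancellation'' the paper exploits probabilistically) and a sharper constant, $E_{Q^\omega_{0,y}}\tau_y\le 1+(y^2-1)/3$, compared to the paper's $1+2y^2$. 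The paper's approach has the advantage of being reusable almost verbatim in the proof of Lemma~\ref{U}, where the occupation-time decomposition is the natural tool; a reader wanting a unified treatment might prefer it for that reason.

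Both arguments are correct; yours is a legitimate and arguably more transparent alternative.
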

\begin{proof}
Let $\ell_y(x)=\sum_{n=0}^{\tau_y-1}\I_{\{S_n=x\}}$.
We need to estimate
\begin{equation}
  \label{a}
  E_{Q^\omega_{0,y}}\tau_y=1+\sum_{x=1}^{y-1}
  E_{Q^\omega_{0,y}}\ell_y(x)=1+\sum_{x=1}^{y-1} 
\sum_{m=0}^\infty Q^\omega_{0,y}(\ell_y(x)>m).
\end{equation}
Denote by ${\cal G}_n$ the sigma-algebra generated by the simple
random walk up to time $n$. Then by the strong Markov property of the
simple random walk we have 
\begin{align*}
  &Z^\omega_{0,y}Q^\omega_{0,y}(\ell_y(x)>m)=E\left(e^{-\sum_{n=0}^{\tau_y-1}
      V(S_n,\omega)}\I_{\{\tau_y<\tau_0,\,\tau_x^{(m+1)}<\tau_y\}}\right)
  \\&\le
  E\left(e^{-\sum_{n=0}^{\tau_x-1}V(S_n,\omega)}\I_{\{\tau_x<\tau_0\}}
    E\left(e^{-\sum_{n=\tau_x^{(m+1)}} ^{\tau_y-1}V(S_n,\omega)}
      \I_{\{\tau_y<\tau_0,\tau_x^{(m+1)}<\tau_y\}}\,\big|\, {\cal
        G}_{\tau_x}\right)\right)
  \\&=E\left(e^{-\sum_{n=0}^{\tau_x-1}V(S_n,\omega)}\I_{\{\tau_x<\tau_0\}}
    E^x\left(e^{-\sum_{n=\tau_x^{(m)}} ^{\tau_y-1}V(S_n,\omega)}
      \I_{\{\tau_y<\tau_0,\tau_x^{(m)}<\tau_y\}}\right)\right) \\&=
  E\left(e^{-\sum_{n=0}^{\tau_x-1}V(S_n,\omega)}\I_{\{\tau_x<\tau_0\}}E^x
    \left(e^{-\sum_{n=\tau_x^{(m)}} ^{\tau_y-1}V(S_n,\omega)}
      \I_{\{\tau_y<\tau_0\}}\ \big|\,\tau_x^{(m)}<\tau_y\wedge
      \tau_0\right)P^x\left(\tau_x^{(m)}<\tau_y\wedge \tau_0\right)
  \right) \\&=
  E\left(e^{-\sum_{n=0}^{\tau_x-1}V(S_n,\omega)}\I_{\{\tau_x<\tau_0\}}E^x\left(e^{-\sum_{n=0}
        ^{\tau_y-1}V(S_n,\omega)} \I_{\{\tau_y<\tau_0\}}\right)\right)
  P^x\left(\tau_x^{(m)}<\tau_y\wedge \tau_0\right)
  \\&=Z^\omega_{0,y}P^x(\tau_x^{(m)}<\tau_y\wedge \tau_0).
\end{align*}
Substituting this into (\ref{a}) and taking into account
that \[P^x(\tau_x^{(m)}<\tau_y\wedge \tau_0) =
\left(\frac{x-1}{2x}+\frac{y-x-1}{2(y-x)}\right)^m=
\left(1-\frac{y}{2x(y-x)}\right)^m,\] we get
\[E_{Q^\omega_{0,y}}\tau_y\le 1+\sum_{x=1}^{y-1} 
\sum_{m=0}^\infty\left(1-\frac{y}{2x(y-x)}\right)^m=
1+\frac2{y} \sum_{x=1}^{y-1} x(y-x)\le 1+2y^2.\]
\end{proof}
\begin{lemma}
  \label{U}
  Fix an arbitrary $p\in[0,1)$, $M\in[0,\infty)$, $\omega\in\Omega$,
  and $y\ge 1$. Let ${\cal O}:=\{x_1,x_2,\dots,x_{n-1}\}$ be the set of
  all occupied sites in $(0,y)$,
  $0=:x_0<x_1<\dots<x_{n-1}<x_n:=y$. Set $r_j=x_j-x_{j-1}$,
  $j=1,2,\dots,n$. Then \[\frac13\,\sum_{j=1}^nr_j^2\le E_{Q^\omega_{0,y}}\tau_y\le
  \frac{1}{3(1-e^{-M})}\sum_{j=1}^nr_j^2.\]
\end{lemma}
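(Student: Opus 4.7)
The plan is to decompose $\tau_y$ according to the hitting times of consecutive occupied sites and apply Proposition~\ref{basic1} on each sub-interval. Set $T_0:=0$ and $T_j:=\tau_{x_j}$ for $j=1,\dots,n$. Because $Q^\omega_{0,y}$ concentrates on paths from $0$ to $y$ that avoid $0$ after time $0$ and avoid $\dagger$, and because the walk is one-dimensional, the obstacles are traversed in order, so $0=T_0<T_1<\cdots<T_n=\tau_y$ and $\tau_y=\sum_{j=1}^n (T_j-T_{j-1})$; it suffices to control each term.

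For the lower bound, inside each $T_j-T_{j-1}$ I isolate the ``final crossing'': the trajectory from $L_j:=\max\{t<T_j:S_t=x_{j-1}\}$ to $T_j$. A one-dimensional walker cannot reach $x_j$ from any site $<x_{j-1}$ without first visiting $x_{j-1}$, so after $L_j$ the path stays in $[x_{j-1},x_j]$, whose interior is obstacle-free. Combining this with the Doob $h$-transform description of $Q^\omega_{0,y}$, where $h(x):=P^{\omega,x}(\tau_y<\tau_0\wedge\dagger)$, a short calculation shows that, conditioned on being a successful crossing, the final excursion has exactly the law of a pure simple random walk excursion from $x_{j-1}$ conditioned on $\tau_{x_j}<\tau_{x_{j-1}}$ (the killing factor $e^{-M}$ at $x_{j-1}$ and the ratio $h(x_j)/h(x_{j-1})$ cancel out when one computes the conditional law). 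Proposition~\ref{basic1} with $k=1$ and $n=r_j$ then gives its expected duration as $1+(r_j-1)(r_j+1)/3=(r_j^2+2)/3\ge r_j^2/3$. Summing over $j$ yields the lower bound.

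For the upper bound, the remaining contribution to $T_j-T_{j-1}$ comes from ``failed'' excursions starting at $x_{j-1}$: those that enter $(x_{j-1},x_j)$ and return, and those that go into $[0,x_{j-1})$ and return. Each failed excursion into $(x_{j-1},x_j)$ has expected duration $(2r_j+1)/3$ by another application of Proposition~\ref{basic1}, and the number of excursions is geometric with success parameter equal to the $h$-transformed escape probability at $x_{j-1}$. At an obstacle, $h$ satisfies $h(x_{j-1})=(e^{-M}/2)(h(x_{j-1}-1)+h(x_{j-1}+1))$, which forces this escape probability to be bounded below in terms of $1-e^{-M}$, so the expected number of visits to each obstacle is of order $1/(1-e^{-M})$. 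Summing the contributions of all excursions produces the factor $1/(3(1-e^{-M}))$ in the claimed upper bound.

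The main obstacle will be the upper bound, since the global conditioning $\{\tau_y<\tau_0\wedge\dagger\}$ couples the intervals, so rigorously bounding the number of visits to each $x_{j-1}$ and the durations of leftward excursions requires the harmonic structure of $h$ at obstacles rather than a purely local argument. Once these counts are controlled, combining the geometric bound on the number of visits with the $O(r_j)$ duration of each failed excursion and the $(r_j^2+2)/3$ duration of the final one gives the stated bounds after summation over $j$.
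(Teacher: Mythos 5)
Your lower bound (isolating the final crossing of each vacant interval and invoking Proposition~\ref{basic1}) is sound and is essentially what the paper does. The gap is in the upper bound. Decomposing $\tau_y=\sum_j(T_j-T_{j-1})$ with $T_j=\tau_{x_j}$ mixes intervals: during $[T_{j-1},T_j)$ the conditioned walk may backtrack past $x_{j-2},x_{j-3},\dots$, potentially all the way to site $1$, so $T_j-T_{j-1}$ is not the time spent in $(x_{j-1},x_j)$, and the lengths of these leftward excursions depend on the whole configuration on $(0,x_{j-1})$. You name this difficulty and gesture at the Doob $h$-transform structure of $Q^\omega_{0,y}$ to handle it, but you never carry that step out, and it is precisely the hard part of your route. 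The paper avoids it entirely by decomposing $\tau_y$ as the sum of durations of sojourns between \emph{consecutive} visits to $\mathcal{O}$: each such sojourn starts and ends at a site of $\mathcal{O}$ and touches no other obstacle in between, so it is confined to a single vacant interval, the uniform bound $E^\ell(\tau_m\,|\,\cdot)\le m^2/3$ from Proposition~\ref{basic1} applies with no cross-interval bookkeeping, and the only additional ingredient is $E_{Q^\omega_{0,y}}\ell_y(x)\le (1-e^{-M})^{-1}$ for $x\in\mathcal{O}$, proved by the same strong Markov computation as in Lemma~\ref{A} (each return to an obstacle costs a factor $e^{-M}$), with no need for $h$-transform harmonicity.

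A secondary problem: even if the leftward excursions were controlled, your accounting would give $\tfrac13\sum_j r_j^2 + O\bigl((1-e^{-M})^{-1}\sum_j r_j\bigr)$, because each failed rightward excursion costs only $\Theta(r_j)$, not $\Theta(r_j^2)$. That is not the claimed form $\tfrac{1}{3(1-e^{-M})}\sum_j r_j^2$, and for small gaps the constant is genuinely worse. The paper's cruder per-sojourn bound $\le r_j^2/3$, applied uniformly to every sojourn (successful crossing or not), is what produces the stated prefactor directly.
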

 
\begin{proof}
  If $n=1$, i.e.\ ${\cal O}=\emptyset$, then the statement follows
  from the properties of the standard random walk (see
  Proposition~\ref{basic1}). Assume that $n\ge 2$, and, therefore,
  $y\ge n\ge 2$. The proof is based on the decomposition of random
  walk paths according to the total number of visits, $k$, to ${\cal
    O}$ before hitting $y$ and to the order, in which the walk visits
  points of ${\cal O}$. For each $k\ge n-1$, such orderings are
  represented by admissible sequences $(y_1,y_2,\dots,y_k)\in{\cal
    O}^k$. For example, for $n=4$ and $k=9$ a sequence
  $(x_1,x_2,x_2,x_3,x_2,x_1,x_2,x_3,x_4)$ is admissible. The set of
  all admissible sequences of length $k$ will be denoted by ${\cal
    Y}_k$ and the set of all random walk paths corresponding to a
  given admissible $\mathbf{y}_k\in{\cal Y}_k$ by
  $W_{\mathbf{y}_k}$. Define $\sigma_0=0$, $\sigma_i=
  \inf\{j>\sigma_{i-1}:\, S_j=y_i\}$, $i\in\IN$. With this notation,
  we have $\tau_y=\sum_{i=1}^k(\sigma_i-\sigma_{i-1} )$ and
  \begin{equation}\label{pd}
    E_{Q^\omega_{0,y}}\tau_y=
    \dfrac{\sum_{k=n-1}^\infty e^{-Mk-V(0)}\sum_{\mathbf{y}_k\in{\cal
          Y}_k}E^0\left(\sum_{i=1}^k(\sigma_i-\sigma_{i-1}
        )\I_{W_{\mathbf{y}_k}}\right)}{\sum_{k=n-1}^\infty e^{-Mk-V(0)}
      \sum_{\mathbf{y}_k\in{\cal
          Y}_k}E^0(\I_{W_{\mathbf{y}_k}})}.
  \end{equation}
  It is easy to see from Proposition~\ref{basic1} that  
 \begin{equation}
   \label{ct}
   E^0(\tau_m\,|\,\tau_m<\tau_0^{(2)})=\frac{m^2+2}{3}\ge \frac{m^2}{3} 
   \ \text{and}\   
   E^\ell(\tau_m\,|\,\tau_m<\tau_0^{(2)})=\frac{m^2-\ell^2}{3}\le \frac{m^2}3.
 \end{equation}
 for all $m\in\IN$ and
 $\ell\in\{1,\dots,n-1\}$. Since the random walk has to cross every
 interval $(x_{j-1},x_j)$, $j=1,\dots, n$, at least once, the strong
 Markov property of the standard random walk, (\ref{pd}), and the
 first inequality in (\ref{ct}) immediately give us the claimed lower bound.

 We turn now to the upper bound.  Using the strong
 Markov property and the second inequality in (\ref{ct}), we can
 estimate the right-hand side of (\ref{pd}) from above by
  \begin{multline*}
    \frac{\left(\sum_{j=1}^nr_j^2\right)\sum\limits_{k=n-1}^\infty\left(
        e^{-Mk-V(0)} \sum\limits_{\mathbf{y}_k\in{\cal Y}_k}
        \left(\max\limits_{1\le j\le
            n-1}\sum_{i=1}^k\I_{\{y_i=x_j\}}\right)
        P^0(W_{\mathbf{y}_k})\right)}{3\sum_{k=n-1}^\infty \left(
        e^{-Mk-V(0)} \sum\limits_{\mathbf{y}_k\in{\cal Y}_k}
        P^0(W_{\mathbf{y}_k})\right)}\\\le \frac13
    \left(\sum_{j=1}^nr_j^2\right)\max_{x\in{\cal
        O}}E_{Q^\omega_{0,y}}\ell_y(x),
  \end{multline*}
  where $\ell_y(x)=\sum_{j=0}^{\tau_y-1}\I_{\{S_j=x\}}$. The last
  term can be handled in the same way as in the proof of
  Lemma~\ref{A}. The only difference is that each additional visit to
  an occupied site adds the factor $e^{-M}$, and it is this
  factor that plays a major role. For every $x\in{\cal O}$ we have
  \begin{multline*}
    E_{Q^\omega_{0,y}}\ell_y(x)= \sum_{m=0}^\infty
  Q^\omega_{0,y}(\ell_y(x)>m)
\le\sum_{m=0}^\infty e^{-Mm}
    P^x(\tau^{(m+1)}_x<\tau_y\wedge \tau_0)\le \frac{1}{1-e^{-M}}.
  \end{multline*}
This completes the proof. 
\end{proof}
\begin{lemma}
 \label{expbound}
 Let $X_i$ be i.i.d.\ non-negative random variables such that $P(X_1
 \geq x) \leq e^{- \sqrt x}$ for all $x \geq n_0$. There exist constants
 $C,c\in(0,\infty)$ such that for all $n$ large
 \[
 P \left(\sum_{i=1}^n X_i \geq Cn  \right) \leq e ^{-c \sqrt n}.
 \]
 \end{lemma}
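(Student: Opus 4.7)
The plan is to decompose via truncation at a level proportional to $n$, handle the rare event that one of the $X_i$ exceeds this level by a union bound, and control the remaining truncated sum via a Chernoff bound with parameter tuned to the Weibull tail. Setting $K = \alpha n$ for a fixed $\alpha > 0$ and $Y_i = X_i \wedge K$, I would start from the decomposition
\begin{equation*}
P\Bigl(\sum_{i=1}^n X_i \ge Cn\Bigr) \le P\Bigl(\max_{1\le i \le n} X_i \ge K\Bigr) + P\Bigl(\sum_{i=1}^n Y_i \ge Cn\Bigr),
\end{equation*}
which is valid because $X_i = Y_i$ for every $i$ on $\{\max_i X_i < K\}$. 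The first term is bounded by $n\,e^{-\sqrt{\alpha n}}$ (for $n$ large enough that $\alpha n \ge n_0$), hence by $e^{-(\sqrt\alpha/2)\sqrt n}$ for all $n$ sufficiently large.

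For the second term I would apply Markov's inequality to $e^{t\sum Y_i}$ with $t = 1/(2\sqrt K) = 1/(2\sqrt{\alpha n})$. The tail-sum identity
\begin{equation*}
\IE[e^{tY_1}] = 1 + t\int_0^K e^{ts}\,P(X_1 > s)\,ds
\end{equation*}
splits at $n_0$ into a contribution of size $O(t)$ and an integral over $[n_0,K]$ where the tail hypothesis yields exponent $ts - \sqrt s$. The key algebraic observation is that for $s \in [0,K]$,
\begin{equation*}
ts - \sqrt s = \frac{s}{2\sqrt K} - \sqrt s \le \frac{\sqrt s}{2} - \sqrt s = -\frac{\sqrt s}{2},
\end{equation*}
so $\int_{n_0}^K t\,e^{ts - \sqrt s}\,ds \le t\int_0^\infty e^{-\sqrt s/2}\,ds = O(t)$. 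Combining, $\IE[e^{tY_1}] \le 1 + C'/\sqrt n$ for a constant $C' = C'(\alpha, n_0)$, hence $(\IE[e^{tY_1}])^n \le e^{C'\sqrt n}$, and Markov gives
\begin{equation*}
P\Bigl(\sum Y_i \ge Cn\Bigr) \le e^{-tCn}(\IE[e^{tY_1}])^n \le \exp\Bigl(\Bigl(-\frac{C}{2\sqrt\alpha} + C'\Bigr)\sqrt n\Bigr).
\end{equation*}

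Choosing $C > 2\sqrt\alpha\, C'$ makes this exponent of order $-\sqrt n$, and combined with the bound on the maximum one gets $P(\sum X_i \ge Cn) \le 2e^{-c\sqrt n}$ for some $c>0$ and all $n$ large. The main obstacle, and the reason Cram\'er-type arguments cannot be applied directly, is that the Weibull tail $e^{-\sqrt x}$ makes $\IE[e^{tX_1}] = \infty$ for every $t > 0$, so Bernstein or standard Chernoff bounds do not apply to $X_1$ itself. Truncating at level $K = \alpha n$ and taking $t$ of order $1/\sqrt n$ is the precise matching that forces $\IE[e^{tY_1}] - 1$ to be of order $1/\sqrt n$, producing exactly the $\sqrt n$-rate of decay stated in the lemma.
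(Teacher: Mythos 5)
Your proof is correct and follows the same overall strategy as the paper's: truncate at a level proportional to $n$, union-bound the rare event that some $X_i$ exceeds that level, and apply an exponential Markov bound with parameter $t$ of order $n^{-1/2}$ to the truncated sum. The one place where you genuinely improve on the paper's write-up is the estimate for the truncated moment generating function. By splitting the tail integral at the fixed threshold $n_0$ (where the Weibull tail bound first becomes available) and observing that for $s\le K$ and $t=1/(2\sqrt K)$ one has $ts-\sqrt s\le -\sqrt s/2$, you obtain the sharp estimate $\IE[e^{tY_1}]=1+O(n^{-1/2})$, so that $(\IE[e^{tY_1}])^n\le e^{O(\sqrt n)}$ and the final exponent is $-c\sqrt n$ once $C$ is chosen large. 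The paper's version splits the integral at $\sqrt n$ rather than at $n_0$; the first piece $\frac{1}{2\sqrt n}\int_0^{\sqrt n}e^{x/(2\sqrt n)}dx=\sqrt e-1$ is then an $O(1)$ quantity rather than the needed $O(n^{-1/2})$, and the displayed identity also drops the additive ``$1$'' from the tail-sum formula $\IE[e^{tX}]=1+t\int_0^\infty e^{tx}P(X>x)\,dx$. These are readily repaired exactly along the lines you propose, so your argument is both correct and a clean fix of the same method rather than a departure from it.
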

 This fact is contained in \cite[Theorem 1.1,\ p.\,748]{Na79} but for
 convenience of the reader we give a short proof. 
\begin{proof}
  Fix an $\epsilon\in(0,1)$ and let $X^{\epsilon n}_i= X_i \I_{\{X_i\le
      \epsilon n\}}$, $i\in\IN$. Then \[\left\{ \sum_{i=1}^n X_i
    \geq Cn \right\}\subset \Big(\bigcup_{i=1}^n \{X_i \geq \epsilon
  n\}\Big)\cup \left\{ \sum_{i=1}^n X^{\epsilon n}_i \geq Cn
  \right\}.\]
The probability of the first union is bounded by $n e ^{-
    \sqrt{\epsilon n}} $ (for $\epsilon n > n_0$), which is less than $
  e ^{- \sqrt {\epsilon n/2}} $ for all large $n$.  It remains to
  bound the probability of the last event.  By Chebyshev's
  inequality \[P\left( \sum_{i=1}^n X^{\epsilon n}_i \geq Cn \right) \leq
  e^{-\frac{C\sqrt n}{ 2}}\left(E\exp\left(\frac{X^{\epsilon
        n}_1}{2\sqrt{n}}\right)\right)^n.\]
We claim that $E\exp\left(X^{\epsilon
        n}_1/(2\sqrt{n})\right)<1$ for all sufficiently large
    $n$. Indeed,
    \begin{align*}
      E\exp\left(\frac{X^{\epsilon
            n}_1}{2\sqrt{n}}\right)&=\frac{1}{2\sqrt{n}}\int_0^{\epsilon
        n} e^{x/(2\sqrt{n})} P(X_1^{\epsilon n}>x)\,dx\\&\le
      \frac{1}{2\sqrt{n}}\int_0^{\sqrt{n}} e^{x/(2\sqrt{n})}
      \,dx+\frac{1}{2\sqrt{n}}\int_{\sqrt{n}}^{\epsilon n}
      e^{x/(2\sqrt{n})} P(X_1^{\epsilon n}>x)\,dx\\&\le
      \sqrt{e}-1+\frac{1}{2\sqrt{n}}\max_{\sqrt{n}\le x\le
        \epsilon n}\exp\left( \frac{x}{2\sqrt{n}}-\sqrt{x}\right).
    \end{align*}
    For a fixed $\epsilon\in(0,1)$ and all large $n$ the maximum is
    attained at the left endpoint. Thus, the last expression converges
    to $\sqrt{e}-1<1$ as $n\to\infty$.
 \end{proof}

 \noindent \textbf{Acknowledgments.}  The author was partially
 supported by a Collaboration Grant for Mathematicians \# 209493
 (Simons Foundation).  Main results and sketches of proofs were
 originally a part of the joint project with Thomas Mountford, who
 should have been a co-author of this paper. But in accordance with
 his wishes the work is published as a single author paper.

\bibliographystyle{amsalpha}

\end{document}